\numberwithin{algocf}{section}
\colorlet{linkequation}{blue}
\numberwithin{equation}{section}
\theoremstyle{plain}
\newtheorem{theorem}{Theorem}[section]
\newtheorem{corollary}[theorem]{Corollary}
\newtheorem{lemma}[theorem]{Lemma}
\newtheorem{proposition}[theorem]{Proposition}
\newtheorem{definition}[theorem]{Definition}
\newtheorem{assumption}[theorem]{Assumption}
\newtheorem{remark}[theorem]{Remark}
\theoremstyle{remark}
\DeclareMathAlphabet{\mathcalligra}{T1}{calligra}{m}{n}
\DeclareMathAlphabet{\mathpzc}{OT1}{pzc}{m}{it}
\renewcommand{\epsilon}{\varepsilon}
\newcommand{\mismatch}{\eta}
\def\LM{{\textnormal{L}}} % maximum element of the layer set
\newcommand*\circled[1]{\tikz[baseline=(char.base)]{
            \node[shape=circle,draw,inner sep=1pt] (char) {#1};}}
\title[Binary Non-uniform HSBM]{Information-Theoretic Limits and Strong Consistency on \\ Binary Non-uniform Hypergraph Stochastic Block Models}
\begin{document}

\author{Hai-Xiao Wang \orcidlink{0000-0003-2730-1439}}
\address{Department of Mathematics, University of Wisconsin-Madison, Madison, WI 53706, USA}
\email{hwang2594@wisc.edu}

% \thanks{}
%  \thanks will become a 1st page footnote.

\date{\today}

\begin{abstract}
We investigate the unsupervised node classification problem on random hypergraphs under the non-uniform \emph{Hypergraph Stochastic Block Model} (HSBM) with two equal-sized communities. In this model, edges appear independently with probabilities depending only on the labels of their vertices. We identify the threshold for strong consistency, expressed in terms of the \emph{Generalized Hellinger} distance. Below this threshold, strong consistency is impossible, and we derive the \emph{Information-Theoretic} (IT) lower bound on the expected \emph{mismatch ratio}. Above the threshold, the parameter space is typically divided into two disjoint regions. When only the aggregated adjacency matrices are accessible, while \emph{one-stage} algorithms accomplish strong consistency with high probability in the region far from the threshold, they fail in the region closer to the threshold. We propose a new refinement algorithm which, in conjunction with the initial estimation, provably achieves strong consistency throughout the entire region above the threshold, and attains the IT lower bound when below the threshold, proving its optimality. This novel refinement algorithm applies the power iteration method to a weighted adjacency matrix, where the weights are determined by hyperedge sizes and the initial label estimate. Unlike the constant degree regime where a subset selection of uniform layers is necessary to enhance clustering accuracy, in the scenario with diverging degrees, each uniform layer contributes non-negatively to clustering accuracy. Therefore, aggregating information across all uniform layers yields better performance than using any single layer alone.
\end{abstract}
\keywords{Hypergraph stochastic block model, Community detection, Information-theoretic limits, Strong consistency, Power iteration method}
\maketitle

{\tableofcontents}
%%%%%%%%%%%%%%%%%%%%%%%%%%%%%%%%%%%%%%%%%%%%%%%%%%%%%%%%%%%%
%%                      Introduction                      %%
%%%%%%%%%%%%%%%%%%%%%%%%%%%%%%%%%%%%%%%%%%%%%%%%%%%%%%%%%%%%
\section{Introduction}\label{sec:intro_binary}

Community detection refers to the task of partitioning the vertices of a graph into groups with similar connectivity patterns. It has attracted significant attention due to its applications in data science, social networks, and biological studies. A classic example is the Zachary's karate club problem \cite{Zachary1977InformationFM}. Community detection is now a central topic in network analysis and machine learning \cite{Newman2002RandomGM, Ng2002SpectralCA, Arias2014CommunityDI}. Random graph models are widely used for analyzing clustering algorithms, benchmarking, and establishing theoretical guarantees. In this work, we focus on the equi-sized two-community model. Formally, let $\gG = (\gV, \gE)$ be a graph with $N$ vertices. The label vector $\rvy \in \{\pm 1\}^{|\gV|}$ assigns each vertex to a community, where $\gV_{+} = \{v\in \gV| \ervy_v = 1\}$ and $\gV_{-} = \{v\in \gV| \ervy_v = -1\}$, with $|\gV_{+}| = |\gV_{-}| = N/2$. Given $\widehat{\rvy}, \rvy \in \{\pm 1\}^{|\gV|}$, define the \emph{Hamming distance} as
\begin{align}
    \D_{\textnormal{HD}}( \rvy, \widehat{\rvy}) = \sum_{v\in \gV} \indi{\ervy_v \neq \widehat{\ervy}_v}, \label{eqn:D_HD_binary}
\end{align}
which counts the number of entries having different values. We further define the \emph{mismatch} ratio as
\begin{align}
   \eta_{N} \coloneqq \eta_{N}(\rvy, \widehat{\rvy}) = \frac{1}{N} \,\,\min_{s \in \{\pm 1\}} \D_{\textnormal{HD}}( s\rvy, \widehat{\rvy}),\label{eqn:misratio_binary}
\end{align}
which is often used for quantifying the accuracy of the estimator $\widehat{\rvy}$. Note that a random guess estimator has expected accuracy $(1/2)^2 + (1/2)^2 = 1/2$, thus the estimator $\widehat{\rvy}$ is meaningful only if $\eta_{N} \leq 1/2$. A natural question arises here: when is it possible to correctly classify all vertices? An estimator $\widehat{\rvy}$ is said to achieve \emph{strong consistency} (exact recovery) if
\begin{align}
    \lim_{N \to \infty}\P(\eta_{N} = 0) = \lim_{N \to \infty}\P( \widehat{\rvy} = \pm \rvy) = 1. \label{eqn:strong_consistency_binary}
\end{align}
At the same time, $\widehat{\rvy}$ is said to achieve \emph{weak consistency} (almost exact recovery) if $\eta_{N} = o(1)$.

In the pioneering work by \cite{Holland1983StochasticBM}, \emph{Stochastic Block Model} (SBM) was first introduced for sociology studies. Motivated by exploring the phase transition behaviors in various regimes, many insightful problems were brought out. It has been successively studied in \cite{Bui1984GraphBA, Boppana1987EigenvaluesAG, Dyer1989TheSO, Snijders1997EstimationAP, Condon1999AlgorithmsFG, McSherry2001SpectralPO, Bickel2009ANV, Coja-Oghlan2010GraphPV, Rohe2011SpectralCA, Choi2012StochasticBW, Agterberg2022JointSC, Chen2022GlobalAI, Ma2023CommunityDI} over the past several decades. A major breakthrough was the establishment of the exact recovery thresholds for the binary case by \cite{Abbe2016ExactRI, Mossel2016ConsistencyTF} and multi-block case by \cite{Abbe2015CommunityDI, Yun2016OptimalCR, Agarwal2017MultisectionIT}. Readers may refer to \cite{Abbe2018CommunityDA} for a more detailed review. The literature abounds with different methods of approach, like spectral algorithms \cite{Ghoshdastidar2014ConsistencyOS, Ahn2016CommunityRI, Ghoshdastidar2017ConsistencyOS, Chien2018CommunityDI, Cole2020ExactRI, Zhang2023ExactRI}, \emph{semidefinite programming} (SDP) \cite{Kim2018StochasticBM, Lee2020RobustHC, Gaudio2023CommunityDI} and \emph{Graph Neural Networks} (GNN) \cite{Chen2018SupervisedCD, Baranwal2022EffectsGC, Baranwal2021GraphCS, Wang2024OptimalER}

The \emph{Hypergraph Stochastic Block Model} (HSBM), first introduced in \cite{Ghoshdastidar2014ConsistencyOS}, is a generalization of SBM to hypergraphs, which models the real life social networks better since it captures higher order interactions among three or more individuals. It has been studied successively in \cite{Angelini2015SpectralDO, Ahn2016CommunityRI,
 Ghoshdastidar2017ConsistencyOS, Kim2018StochasticBM, Chien2018CommunityDI, Chien2019MinimaxMR, Cole2020ExactRI, Pal2021ComunityDI, Dumitriu2025PartialRA, Zhang2023ExactRI, Gu2023WeakRT, Dumitriu2023OptimalAE, Wang2023ProjectedTP} over the last decade. For strong consistency of \textit{uniform} HSBMs, the exact thresholds were given in \cite{Kim2018StochasticBM, Gaudio2023CommunityDI, Zhang2023ExactRI} by generalizing techniques in \cite{Abbe2016ExactRI, Abbe2020EntrywiseEA, Abbe2015CommunityDI}. So far, most results concern community detection on uniform hypergraphs, which require the same number of vertices in each edge. However, the assumption on uniformity between edges is constraining and somewhat impractical, and the non-uniform HSBM was less explored in the literature, with notable results in \cite{Ghoshdastidar2017ConsistencyOS, Dumitriu2025PartialRA, Alaluusua2023MultilayerHC, Dumitriu2023OptimalAE}. According to \cite{Dumitriu2023OptimalAE}, the exact threshold for strong consistency of non-uniform HSBM was established, where some two-stage algorithms were proposed to achieve strong consistency when above the threshold. However, these algorithms require the full set knowledge of adjacency tensors as input, which may not be accessible in practice.

In this paper, we focus on the unsupervised node classification problem under the binary non-uniform HSBM with two equi-sized communities, where only the aggregated adjacency matrices are available. We begin by introducing the basic definitions of the non-uniform HSBM, then the problems of our interests. We will summarize our contributions and outline the organization of this paper at the end of this section.
%%%%%%%%%%%%%%%%%%%%%%%%%%%%%%%%%%%%%%%%%%%%%%%%%%%%%%%%%%%%%
\subsection{The model}
\begin{definition}[Hypergraph]%\label{def:hypergraph}
    A hypergraph $\gH$ is a pair $\gH = (\gV, \gE)$, where $\gV$ is the vertex set and $\gE$ denotes the set of non-empty subsets of $\gV$. If every hyperedge $e$ is an $\ell$-subset of $\gV$, then $\gH$ is called $\ell$-uniform. The degree $\rD_{v}$ of a vertex $v \in \gV$ is the number of hyperedges in $\gE$ containing $v$.
\end{definition}
\begin{definition}[Symmetric binary uniform HSBM]\label{def:uniform_HSBM_binary}
    Let $\rvy \in \{ \pm 1\}^{|\gV|}$ denote the label vector on $\gV$, where $\rvy$ is chosen uniformly at random among all the vectors satisfying $\ones^{\sT}\rvy = 0$. The $\ell$-uniform hypergraph $\gH_{\ell} = (\gV, \gE_{\ell})$ ($\ell\geq 2$ some fixed integer)  is drawn in the following manner: each $\ell$-hyperedge $e:= \{i_1, \ldots, i_{\ell}\}\subset \gV$ is sampled independently with probability $\alpha_{\ell}$ if $\ervy_{i_{1}} = \ldots = \ervy_{i_{\ell}}$, otherwise with probability $\beta_{\ell}$.
\end{definition}
\begin{definition}[Non-uniform HSBM]\label{def:non_uniform_HSBM_binary}
    Let $\rvy \in \{ \pm 1\}^{|\gV|}$ denote the label vector, sampled uniformly at random among all the vectors satisfying $\ones^{\sT}\rvy = 0$. Let $\sL = \{\ell | \ell\geq 2\}$ be a set of finitely many integers with $\LM$ denoting its largest element. The non-uniform hypergraph $\gH =(\gV, \gE)$ is a collection of independent uniform ones, i.e., $\gH = \cup_{\ell \in \sL}\gH_{\ell}$ with $\gE := \cup_{\ell \in \sL}\gE_{\ell}$, where each $\gH_{\ell} = (\gV, \gE_{\ell})$ is sampled from the $\ell$-uniform \emph{HSBM} \ref{def:uniform_HSBM_binary} independently given the same $\rvy$.
\end{definition}
\begin{remark}\label{rem:model_difference}
Unlike the model in \cite{Dumitriu2023OptimalAE}, where each vertex is assigned to some community independently and the community sizes might fluctuate, our model \ref{def:non_uniform_HSBM_binary} imposes a strict half-half constraint $\ones^{\sT}\rvy = 0$, ensuring exactly equal-sized communities. The proof strategies for the main results differ from those in \cite{Dumitriu2023OptimalAE}, which are tailored to our model. %The threshold $\D_{\rm{GH}}$ derived in \Cref{thm:impossibility_binary} coincides with the one in \cite{Dumitriu2023OptimalAE} when specialized to the symmetric binary case.
\end{remark}
One can associate an $\ell$-uniform hypergraph $\gH_{\ell}$ to an order-$\ell$ symmetric tensor $\tA^{(\ell)}$, where the entry $\etA^{(\ell)}_{e}$ denotes the presence of the $\ell$-hyperedge $e = \{i_1, \ldots, i_{\ell}\}$, i.e., $\etA_{i_1,\dots, i_{\ell}}^{(\ell)} = \indi{e \in \gE_{\ell}}$. However, it is impossible to aggregate information of each uniform hypergraph directly since tensors are of different orders. Meanwhile, most computations involving tensors are NP-hard \cite{Hillar2013MostTP}. To that end, we introduce the following \emph{aggregated adjacency matrix} for hypergraphs, obtained by aggregating information from different tensors entrywisely.
\begin{definition}[Aggregated adjacency matrix]
For the non-uniform hypergraph $\gH = (\gV, \gE)$, let $\tA^{(\ell)}$ be the order-$\ell$ adjacency tensor corresponding to underlying $\ell$-uniform hypergraph for each $\ell \in\sL$. Let
\begin{align}
    \ermA_{ij} = \sum_{\ell \in \sL}\ermA^{(\ell)}_{ij}, \quad \ermA^{(\ell)}_{ij} \coloneqq \indi{i \neq j} \sum_{e\in \gE(\gH_{\ell}),\,\, e \supset \{i, j\} } \,\,\,\, \etA^{(\ell)}_{e}\,,\label{eqn:adjacency_matrix_entry_binary}
\end{align}
denote the number of hyperedges containing both vertices $i$ and $j$ in the non-uniform hypergraph $\gH$. The diagonal entries are defined as $\ermA_{ii} \coloneqq 0$ for each $i\in \gV$ since each $\ell$-hyperedge contains $\ell$ distinct vertices. 
\end{definition}

For the discussion of strong consistency, it typically requires that the expected degree of each vertex grows to infinity with respect to the number of vertices. To simplify the presentation, we adapt the following assumption on the model parameters.
\begin{assumption}\label{ass:asymptotics_binary}
For each $\ell \in \sL$, assume that $\alpha_{\ell} \geq \beta_{\ell}$. We write $\alpha_{\ell} = a_{\ell} \cdot q_{N} /\binom{N-1}{\ell - 1}$ and $\beta_{\ell} = b_{\ell} \cdot q_{N} /\binom{N-1}{\ell - 1}$, where $q_{N}$ denotes the magnitude parameter with $1 \ll q_{N} \lesssim \log(N)$ and $a_{\ell} \geq b_{\ell} \gtrsim 1$.
\end{assumption}

\subsection{Formulation of the problems}
We first define the following \emph{Generalized Hellinger} divergence $\D_{\mathrm{GH}}$ 
\begin{align}
    \D_{\mathrm{GH}} \coloneqq &\, \sum_{\ell \in \sL}\frac{1}{2^{\ell - 1}} \big( \sqrt{a_{\ell}} - \sqrt{b_{\ell}} \big)^2,\label{eqn:DGH_binary}
\end{align}
which quantifies the fundamental \emph{Information-Theoretic} (IT) limits for the non-uniform HSBM model \ref{def:non_uniform_HSBM_binary}, when the full set of adjacency tensors $\{\tA^{(\ell)}\}_{\ell \in \sL}$ is accessible. By contrast, we introduce the following function of the model parameters:
\begin{align}
    \D_{\rm{AM}} \coloneqq &\, \sup_{t\geq 0} \sum_{\ell \in \sL}\frac{1}{2^{\ell - 1}} \Big[ a_{\ell}\big(1 - e^{-(\ell - 1)t} \big) + b_{\ell} \sum_{j=1}^{\ell - 1} \binom{\ell - 1}{j} \big(1 - e^{-(\ell - 1 - 2j)t} \big) \Big],\label{eqn:DAM_binary}
\end{align}
which characterizes the algorithmic limits when only the aggregated \emph{Adjacency Matrix} (AM) serves as input. Except for special cases $\sL = \{2\}$ and $\sL = \{3\}$ illustrated in \Cref{fig:thresholds_plots} (b), a gap emerges between $\D_{\mathrm{GH}}$ and $\D_{\rm{AM}}$, as demonstrated for $\sL = \{4\}$ in \Cref{fig:thresholds_plots} (a) and $\sL = \{2, 3\}$ in \Cref{fig:thresholds_plots} (c).

\begin{figure}
    \centering
    \begin{minipage}[h]{0.328\linewidth}
        \centering
        {\includegraphics[width=0.99\linewidth]{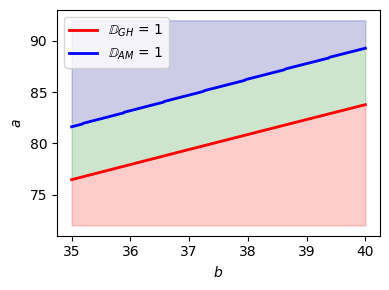} 
        (a) $\sL =\{4\}$
        }
    \end{minipage}
    \begin{minipage}[h]{0.328\linewidth}
        \centering
        {\includegraphics[width=0.99\linewidth]{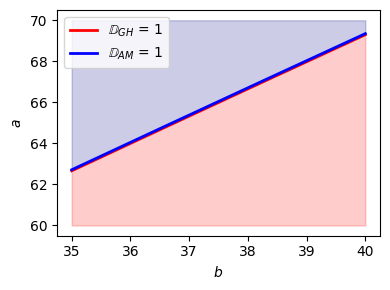} 
        (b) $\sL =\{3\}$
        }
    \end{minipage}
    \begin{minipage}[h]{0.328\linewidth}
    \centering
    {\includegraphics[width=0.99\linewidth]{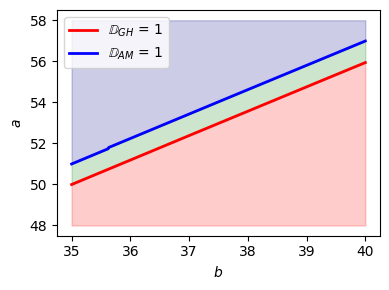}
    (c) $\sL =\{2, 3\}$
    }
    \end{minipage}
    \caption{\small{Plots for areas $\D_{\mathrm{GH}} < 1$ (\textcolor{black}{red}), $\D_{\rm{AM}} < 1 < \D_{\mathrm{GH}}$ (\textcolor{black}{green}) and $\D_{\rm{AM}} > 1$ (\textcolor{black}{blue})}} \label{fig:thresholds_plots}
\end{figure}

\subsubsection{Phase transition threshold for strong consistency}
For \textbf{uniform} hypergraphs, the problem is reduced to the graph case when $\sL = \{2\}$, where the two quantities are $\D_{\mathrm{GH}} = \D_{\rm{AM}} = (\sqrt{a_2}- \sqrt{b_2})^2/2$. It is impossible to achieve strong consistency when $\D_{\mathrm{GH}} < 1$ \cite{Abbe2016ExactRI}, while \emph{one-stage} algorithms, such as spectral clustering \cite{Abbe2020EntrywiseEA, Deng2021StrongCG} and \emph{semidefinite programming} (SDP) \cite{Hajek2016AchievingEC}, achieve strong consistency when $\D_{\mathrm{GH}} > 1$. However, when $\sL = \{\ell\}$ for some fixed $\ell \geq 4$, the gap between $\D_{\mathrm{GH}}$ and $\D_{\rm{AM}}$ appears, and the story is not as elegant as before. According to \cite{Kim2018StochasticBM}, for any algorithm, it is impossible to accomplish exact recovery when $\D_{\mathrm{GH}}<1$ (\textcolor{black}{red} area). On the other hand, when $\D_{\rm{AM}} > 1$ (\textcolor{black}{blue} area), both spectral method and SDP achieve strong consistency using the \emph{aggregated adjacency matrix} $\rmA$ \eqref{eqn:adjacency_matrix_entry_binary}, as proved in \cite{Gaudio2023CommunityDI}. When $\D_{\rm{AM}} < 1 < \D_{\mathrm{GH}}$ (\textcolor{black}{green} area), strong consistency can be accomplished via some \emph{two-stage} algorithms, e.g. \cite{Chien2019MinimaxMR, Zhang2023ExactRI}, where the adjacency tensor $\tA^{(\ell)}$ is required as input. However, it remains unclear whether strong consistency can be achieved using purely the aggregated adjacency matrix $\rmA$ when $\D_{\rm{AM}} < 1 < \D_{\mathrm{GH}}$. In terms of the efficiency of information usage, we raise the following question.
\begin{quote}
    \emph{1. Is it possible to achieve strong consistency when $\D_{\mathrm{GH}}> 1$ using purely the adjacency matrix $\rmA$}?
\end{quote}

For \textbf{non-uniform} hypergraphs, the literature is less explored. It was proved by \cite{Alaluusua2023MultilayerHC} that strong consistency can be accomplished by SDP when $\D_{\rm{AM}} > 1$ (\textcolor{black}{blue} in \Cref{fig:thresholds_plots} (c)) with time complexity $O(N^{3.5})$ \cite{Jiang2020FasterIP}. The phase transition phenomena was presented in \cite{Dumitriu2023OptimalAE}, where the authors proved that strong consistency can be accomplished if and only if $\D_{\mathrm{GH}} > 1$. There, the authors presented two algorithms to achieve strong consistency, both \emph{two-stage} (spectral clustering + refinement), where the two refinement algorithms are based on \emph{majority voting} and \emph{Maximum A Posteriori} (MAP), respectively. The algorithms in \cite{Dumitriu2023OptimalAE} may be a bit redundant for us due to the symmetry in our setting. 
\begin{quote}
\emph{2. Can we design a new refinement algorithm tailored to the symmetric binary case, which achieves strong consistency when $\D_{\mathrm{GH}} > 1$ using purely the adjacency matrix $\rmA$}?
\end{quote}
Besides that, non-uniform HSBM is by definition a multi-layer model, where each uniform layer may contain different amounts of information about the underlying communities. We raise the following natural question.
\begin{quote}
\emph{3. Does aggregating information from multiple uniform layers yield better results than considering each layer in isolation}?
\end{quote}

\subsubsection{Information-Theoretic lower bound}
We start with a simple example to illustrate the concept of the IT lower bound. The model \ref{def:non_uniform_HSBM_binary} is reduced to the graph case with binary symmetric communities when taking $\sL = \{2\}$. Consider the following two extreme scenarios. First, $\alpha_2 = 1$ and $\beta_2 = 0$, then two disjoint cliques are drawn and it is trivial to achieve strong consistency. Second, when $\alpha_2 = \beta_2$, no algorithm could outperform random guess, since the communities themselves are indistinguishable. From this point of view, once the parameters $\{\alpha_{\ell}\}_{\ell \in \sL}, \{\beta_{\ell}\}_{\ell \in \sL}$ in model \ref{def:non_uniform_HSBM_binary} are determined, for any algorithm, there exists a certain mathematical limit on clustering accuracy. In addition, to achieve certain accuracy, the parameters of the underlying model \ref{def:non_uniform_HSBM_binary} have to satisfy certain conditions.

The IT lower bound are less explored in the literature. For graphs, it was studied by \cite{Abbe2015CommunityDI, Zhang2016MinimaxRO, Abbe2020EntrywiseEA, Abbe2022LPT} for the binary case and \cite{Gao2017AchievingOM, Yun2016OptimalCR, Zhang2023FundamentalLO} for the multi-community case. For uniform hypergraphs, \cite{Chien2019MinimaxMR} proved that when restricting the model parameters on some (potential) subset (Eq.(1) therein) of the \textcolor{black}{red} in \Cref{fig:thresholds_plots} (a)-(c), some minimax lower bound on the expected mismatch ratio (Theorem 7.1) can be obtained. However, it remains unclear whether such a lower bound exists for the entire \textcolor{black}{red} area in \Cref{fig:thresholds_plots} (a)-(c) and how to characterize it explicitly in terms of model parameters. We thus raise the following two questions.

\begin{quote}
\noindent \emph{4. Is it possible to characterize the accuracy limit explicitly in terms of model parameters} ? \\
\noindent \emph{5. If so, it is possible to achieve IT lower bound efficiently} ?
\end{quote}
\subsection{Contributions}
Our contribution to the non-uniform binary HSBM can be summarized as follows.
\begin{enumerate}[label=(\arabic*), leftmargin=2em]
    \item We establish the threshold $\D_{\mathrm{GH}} = 1$ for strong consistency. We proved that it is impossible to achieve strong when $\D_{\mathrm{GH}} < 1$ in \Cref{thm:impossibility_binary}.
    \item When only the aggregated adjacency matrix $\rmA$ is given, we propose Algorithms \ref{alg:refinement_spectral} and \ref{alg:refinement_voting_binary}, both provably achieving strong consistency when $\D_{\mathrm{GH}} >1$ by refining the initial estimate obtained from Algorithm \ref{alg:spectral_partition_binary}. This answers question 1.
    \item Algorithm \ref{alg:refinement_spectral} is a novel approach in the literature, where the refinement stage utilizes the power iteration method on a carefully designed matrix, answering question 2.
    \item We demonstrate that all the uniform layers contribute non-negatively to clustering accuracy, showing that aggregating information from multiple uniform layers yields better results compared to considering each layer in isolation, answering question 3.
    \item We present the IT lower bound on expected mismatch ratio for any algorithm in Theorem \ref{thm:IT_lower_bound_binary}, informally stating that $\E \eta_{N} \geq N^{-\D_{\rm{GH}}}$, answering question 4.
    \item Both Algorithms \ref{alg:refinement_spectral} and \ref{alg:refinement_voting_binary} achieves the IT lower bound when $\D_{\rm{GH}} < 1$, answering question 5.
\end{enumerate}

\subsection{Organization} 
We present our main results in \Cref{sec:main_results_binary}. \Cref{sec:information_theoretic_limits_binary} provides the proof outline for the information-theoretic limits. The performance of \Cref{alg:spectral_partition_binary}, using purely the adjacency matrix, is analyzed in \Cref{sec:achievability_matrices_binary}, while the analysis using the normalized Laplacian matrix is deferred to \Cref{app:achievabilityLap}. The analysis of Algorithms \ref{alg:refinement_spectral} and \ref{alg:refinement_voting_binary} is presented in \Cref{sec:refinement_binary}. Finally, \Cref{sec:conclusion_binary} concludes the paper. Detailed proofs for all sections are provided in the appendices.

\section{Main Results}\label{sec:main_results_binary}

\subsection{Information-Theoretic limits}
We first present the necessary condition to achieve strong consistency.
\begin{theorem}[Impossibility of exact recovery]\label{thm:impossibility_binary}
For model \ref{def:non_uniform_HSBM_binary} under Assumption \ref{ass:asymptotics_binary}, suppose
\begin{align}
    \limsup_{N \to \infty} \, \D_{\rm{GH}}\cdot q_{N}/\log(N) < 1, \label{eqn:impossibility_condition_binary}
\end{align}
then with probability tending to $1$, every algorithm will misclassify at least two vertices.
\end{theorem}
\begin{remark}
    Here, $\D_{\mathrm{GH}}$ is the rate function derived from the \emph{Large Deviation Principle (LDP)} analysis using the adjacency tensors $\{\tA^{(\ell)}\}_{\ell \in \sL}$. The detailed arguments are deferred to Lemmas \ref{lem:binom_difference_prob} and \ref{lem:LDP_binomial}.
\end{remark}
The condition \eqref{eqn:impossibility_condition_binary} also covers the scenario when $\D_{\rm{GH}} \gg 1$ ($a_{\ell} \gg b_{\ell}$) and $1\ll q_{N} \ll \log(N)$, which was ignored in previous literature. In addition, when $q_{N} = \log(N)$ and $\sL = \{\ell\}$ for some fixed integer $\ell\geq 2$, \eqref{eqn:impossibility_condition_binary} can be simplified as $\D_{\mathrm{GH}} < 1$, which is the necessary condition of strong consistency for $\ell$-uniform hypergraphs in \cite{Kim2018StochasticBM}. By taking $\sL = \{2\}$, \eqref{eqn:impossibility_condition_binary} becomes $(\sqrt{a_{2}} - \sqrt{b_{2}})^{2} < 1$, as derived in \cite{Abbe2016ExactRI} for graphs.

As will be noted in \Cref{rem:model_difference}, model \ref{def:non_uniform_HSBM_binary} differs from that in \cite{Dumitriu2023OptimalAE}, leading to distinct proof techniques. Interestingly, when the model in \cite{Dumitriu2023OptimalAE} is specialized to the symmetric binary case considered here, their threshold $\D_{\rm{GCH}}$ coincides with our $\D_{\rm{GH}}$. This serves as a mutual validation of both thresholds.

Furthermore, we establish a lower bound on the reconstruction error rate for any estimator, explicitly characterized by the model parameters.
\begin{theorem}[Lower bound on the expected mismatch ratio]\label{thm:IT_lower_bound_binary}
Under Assumption \ref{ass:asymptotics_binary}, for any sequence of estimators $\widehat{\rvy}$, the following holds:
    \begin{align}
        \liminf_{N \to \infty} \, q_{N}^{-1} \log \E\,\eta_{N}(\rvy, \widehat{\rvy}) \geq -\D_{\mathrm{GH}}.
    \end{align}
\end{theorem}
Informally, \Cref{thm:IT_lower_bound_binary} can be interpreted as $\E \eta_{N} \geq e^{-\D_{\mathrm{GH}} \cdot q_{N}}$, which can be further simplified as $\E \eta_{N} \geq N^{-\D_{\rm{GH}}}$ when $q_{N} = \log(N)$. 

\subsection{Spectral partition using purely the adjacency matrix}
\Cref{alg:spectral_partition_binary} only takes the aggregated adjacency matrix $\rmA$ in \eqref{eqn:adjacency_matrix_entry_binary} or its normalized Laplacian as input.

\begin{algorithm}
\caption{\textbf{Spectral partition via the second eigenvector}}\label{alg:spectral_partition_binary}

\KwData{The $N \times N$ matrix $\rmX$, either $\rmX = \rmA$ or $\rmX = \rmL \coloneqq \rmD^{-1/2} \rmA \rmD^{-1/2}$.}

{Compute the eigenvector $\rvu_2$ corresponding to the second largest eigenvalue $\lambda_2$ of $\rmX$.}

{Construct $\widehat{\gV}^{(0)}_{+} = \{v\mid \widehat{\ervy}_{v}^{(0)} >0 \}$ and $\widehat{\gV}^{(0)}_{-} = \{v\mid \widehat{\ervy}_{v}^{(0)} < 0 \}$, where $\widehat{\rvy}^{(0)} = \sign(\rvu_2)$.}

\KwResult{$\widehat{\gV}^{(0)}_{+}$ and $\widehat{\gV}^{(0)}_{-}$.}
\end{algorithm}

\begin{theorem}[Strong consistency]\label{thm:achievability_matrices}
Under Assumption \ref{ass:asymptotics_binary}, for some absolute $\epsilon >0$, suppose
\begin{align}\label{eqn:exact_recovery_condition_matrices}
    \liminf_{N \to \infty} \, \D_{\rm{AM}}\cdot q_{N} /\log(N) \geq  1 + \epsilon,
\end{align}
then with probability tending to $1$, Algorithm \ref{alg:spectral_partition_binary} accomplishes strong consistency.
\end{theorem}

When $q_{N} = \log(N)$, \eqref{eqn:exact_recovery_condition_matrices} can be simplified as $\D_{\rm{AM}} \geq 1 + \epsilon$, which can be reduced to the result in \cite{Gaudio2023CommunityDI} for the uniform hypergraph case, where only the adjacency matrix is taken as input. In addition, we show that the second eigenvector of the \emph{normalized Laplacian} $\rmL = \rmD^{-1/2} \rmA \rmD^{-1/2}$ can be used for clustering. %Furthermore, the case $q_{N} \ll \log(N)$ but $a_{\ell} \gg b_{\ell}$ is also covered by Theorem \ref{thm:achievability_matrices}, since \eqref{eqn:exact_recovery_condition_matrices} could still hold.

\begin{theorem}[Optimality]\label{thm:optimality_matrices}
    Suppose Assumption \ref{ass:asymptotics_binary} and the condition \eqref{eqn:impossibility_condition_binary}. After running Algorithm \ref{alg:spectral_partition_binary}, the following holds with high probability
        \begin{align}
            \limsup_{N \to \infty} q_{N}^{-1} \log \E \eta_{N}(\rvy, \widehat{\rvy}^{(0)}) \leq - \D_{\rm{AM}}.\notag
        \end{align}
\end{theorem}
Informally, \Cref{thm:optimality_matrices} gives an upper bound $e^{-\D_{\rm{AM}}\cdot q_{N}}$ on the expected mismatch ratio of the estimator $\widehat{\rvy}^{(0)}$. This bound is larger than the lower bound given in \Cref{thm:IT_lower_bound_binary} when $\D_{\rm{GH}} < 1$ since $\D_{\rm{AM}} \leq \D_{\rm{GH}}$, meaning that $\widehat{\rvy}^{(0)}$ doesn't attain the lowest possible mismatch ratio.

%However, the theoretical guarantee when $\D_{\rm{AM}} < 1 < \D_{\mathrm{GH}}$ (\textcolor{black}{green} area in \Cref{fig:thresholds_plots} (a) (c)) remains open. 

\subsection{Refinement algorithms}
As shown in previous analysis, $\widehat{\rvy}^{(0)}$ obtained from Algorithm \ref{alg:spectral_partition_binary} neither achieves strong consistency when $\D_{\mathrm{GH}} > 1 > \D_{\rm{AM}}$, nor does it attain the lowest possible mismatch ratio when $\D_{\mathrm{GH}} < 1$. To bridge this gap, we propose Algorithms \ref{alg:refinement_spectral} and \ref{alg:refinement_voting_binary}, both of which achieve strong consistency as long as $\D_{\mathrm{GH}} > 1$, and attain the IT lower bound on the expected mismatch ratio established in \Cref{thm:IT_lower_bound_binary} when $\D_{\mathrm{GH}} > 1$. \Cref{alg:refinement_spectral} introduces a novel refinement approach based on power iteration, while \Cref{alg:refinement_voting_binary} employs the majority voting scheme. Both algorithms take the initial estimate $\widehat{\rvy}^{(0)}$ from Algorithm \ref{alg:spectral_partition_binary} and the set of adjacency tensors $\{\tA^{(\ell)}\}_{\ell \in \sL}$ as input. 

It is worth noting that any initial estimate $\widehat{\rvy}^{(0)}$ satisfying the error rate condition $\eta_{N}(\rvy, \widehat{\rvy}^{(0)}) < q_{N}^{-3}$ is sufficient for both refinement algorithms. Furthermore, the explicit requirement for the adjacency tensors $\{\tA^{(\ell)}\}_{\ell \in \sL}$ can be relaxed. As demonstrated in \cite{bresler2024thresholds}, the $\ell$-th order tensor $\tA^{(\ell)}$ can be exactly reconstructed with high probability from its corresponding adjacency matrix $\rmA^{(\ell)}$. However, for non-uniform hypergraphs, access to the individual layer-wise matrices $\rmA^{(\ell)}$ for each $\ell \in \sL$ remains necessary for this reconstruction, as it is currently unknown whether the tensors $\{\tA^{(\ell)}\}_{\ell\in \sL}$ can be recovered exactly when only the aggregated matrix $\rmA \coloneqq \sum_{\ell\in \sL} \rmA^{(\ell)}$ is available.

\subsubsection{Refinement via power iteration}
We start by introducing necessary concepts. For each vertex $v \in \gV$, we categorize the associated edges based on the community membership of their vertices. For each $0\leq r \leq \ell - 1$ with $\ell \in \sL$, let $\gE^{(r)}_{\ell}(v)$ denote the set of $\ell$-hyperedge containing $v$ such that exactly $r$ vertices in the edge have different community membership from $v$, formally,
\begin{align}
     \gE^{(r)}_{\ell}(v) \coloneqq \{e\in [\gV]^{\ell} \mid v \in e,\,\, |\{u \in e\setminus\{v\} : \ervy_u \neq \ervy_v\}| = r\}. \label{eqn:edge_classification}
\end{align} 
%However, the weighting procedure in \eqref{eqn:barA} and \eqref{eqn:tildeAevj} requires knowledge of $\rvy$, where an almost exact reconstruction $\widehat{\rvy}^{(0)}$ would be accessible after the initial stage, with $\E \eta_{N}(\rvy, \widehat{\rvy}^{(0)}) \leq N^{-\D_{\rm{AM}}} = o(1)$ by \Cref{thm:optimal_recovery_binary} (1). Moreover, let $\mu_{\ell} = |\gE_{\ell}|/(N\log(N))$ denote the edge density of the $\ell$-uniform hypergraph $\gH_{\ell}$. 
We can approximate $\gE^{(r)}_{\ell}(v)$ by constructing the $\ell$-hyperedge set $\widehat{\gE}^{(r)}_{\ell}(v)$ based on the initial partition $\widehat{\rvy}^{(0)}$. Formally, we define
\begin{align}
     \widehat{\gE}^{(r)}_{\ell}(v) \coloneqq \{e\in [\gV]^{\ell} \mid v \in e,\,\, |\{u \in e\setminus\{v\} : \widehat{\ervy}^{(0)}_u \neq \widehat{\ervy}^{(0)}_v\}| = r\}. \label{eqn:hat_edge_classification}
\end{align}
We consider vertices contained by edge $e\in \widehat{\gE}^{(r)}_{\ell}(v)$ other than $v$. Given $\ell \in \sL$, for each $j\in e \setminus \{v\}$, define the entry $\widehat{\etA}^{(\ell)}_{e, v, j} \coloneqq \etA^{(\ell)}_{e}/(\ell - 1)$ if $r=0, \ell - 1$; when $1\leq r \leq \ell - 2$, we let
\begin{align}
    \widehat{\etA}^{(\ell)}_{e, v, j} \coloneqq \etA^{(\ell)}_{e}/r,\,\, \textnormal{ if }\widehat{\ervy}^{(0)}_v = \widehat{\ervy}^{(0)}_j; \quad \widehat{\etA}^{(\ell)}_{e, v, j} \coloneqq \etA^{(\ell)}_{e}/(\ell - 1-r), \,\,\textnormal{ if }\widehat{\ervy}^{(0)}_v \neq \widehat{\ervy}^{(0)}_j. \label{eqn:hatAevj}
\end{align}
Furthermore, for each $\ell \in \sL$, we compute the following quantity:
\begin{align}
    \widehat{\psi}_{\ell} = (\ell - 2)\log(2) + \log\left( \frac{ \ones_{N}^{\sT} \rmA^{(\ell)} \ones_{N} + (\widehat{\rvy}^{(0)})^{\sT} \rmA^{(\ell)} \widehat{\rvy}^{(0)} }{ \ones_{N}^{\sT} \rmA^{(\ell)} \ones_{N} - (\widehat{\rvy}^{(0)})^{\sT} \rmA^{(\ell)} \widehat{\rvy}^{(0)} } - 1 + 2^{-\ell + 2}\right).\label{eqn:psi_hat}
\end{align}
Then, we construct the weighted adjacency matrix $\widehat{\rmA}$ based on $\widehat{\rvy}^{(0)}$, with each entry defined by
\begin{align}
    \widehat{\ermA}_{vj} \coloneqq \indi{v \neq j} \cdot \sum_{\ell \in \sL} \widehat{\psi}_{\ell} \sum_{r=0}^{\ell - 1} \sum_{e \in \widehat{\gE}^{(r)}_{\ell}(v) } \widehat{\etA}^{(\ell)}_{e, v, j}\,.\label{eqn:hatA}
\end{align}
Inspired by \cite{Abbe2022LPT}, we propose \Cref{alg:refinement_spectral} to refine the initial estimate $\widehat{\rvy}^{(0)}$. The procedure begins by constructing the weighted adjacency matrix $\widehat{\rmA}$ as defined in \eqref{eqn:hatA}. Results concerning the performance of \Cref{alg:refinement_spectral} will be presented in \Cref{thm:achieveability_binary} (1) and \Cref{thm:optimal_recovery_binary} (1).
\begin{algorithm}
\caption{\textbf{Refinement via power iteration}}\label{alg:refinement_spectral}
\KwData{The tensors $\{\tA^{(\ell)}\}_{\ell \in \sL}$ and initial estimation $\widehat{\rvy}^{(0)}$.}

{Construct the weighted adjacency matrix $\widehat{\rmA}$ via \eqref{eqn:hatAevj} and \eqref{eqn:hatA}.}

{Construct $\widehat{\rvy} = \sign(\widehat{\rmA}\widehat{\rvy}^{(0)})$ with $\widehat{\gV}_{+} = \{v\mid \widehat{\ervy}_v >0 \}$ and $\widehat{\gV}_{-} = \{v\mid \widehat{\ervy}_v < 0 \}$.}

\KwResult{The refined partition $\widehat{\gV}_{+}$ and $\widehat{\gV}_{-}$.}
\end{algorithm}

\subsubsection{Refinement via majority voting}
We now introduce the second refinement algorithm, which is based on the majority voting scheme. Unlike \Cref{alg:refinement_spectral}, which relies on a single step of power iteration, \Cref{alg:refinement_voting_binary} iteratively updates the partition over multiple rounds to ensure convergence to the true community assignment. 

Let $t\in \N$ denote the iteration index. We define $\widehat{\rvy}^{(t)}$ as the estimated label vector after iteration $t$ where $\widehat{\rvy}^{(0)}$ represents the initial estimate, with the corresponding vertex partition denoted by $\widehat{\gV}^{(t)}_{+}$ and $\widehat{\gV}^{(t)}_{-}$. Furthermore, let $\widehat{\gE}^{(t)}_{\alpha_{\ell}}$ represent the set of $\ell$-hyperedges where all vertices belong to the same partition (either $\widehat{\gV}^{(t)}_{+}$ or $\widehat{\gV}^{(t)}_{-}$), and let $\widehat{\gE}^{(t)}_{\beta_{\ell}} \coloneqq \gE_{\ell} \setminus \widehat{\gE}^{(t)}_{\alpha_{\ell}}$ denote the remaining edges. For each $v\in \gV$, let $\widehat{\gE}^{(t)}_{\alpha_{\ell}}(v)$ (resp. $\widehat{\gE}^{(t)}_{\beta_{\ell}}(v)$) denote the subset of $\widehat{\gE}^{(t)}_{\alpha_{\ell}}$ (resp. $\widehat{\gE}^{(t)}_{\beta_{\ell}}$) containing $v$. Formally, we define
\begin{subequations}
\begin{align}
     \widehat{\gE}^{(t)}_{\alpha_{\ell}} \coloneqq \Big\{\{i_1, \ldots, i_{\ell} \} \in \gE_{\ell}: \widehat{\ervy}^{(t)}_{i_1} = \cdots = \widehat{\ervy}^{(t)}_{i_{\ell}} \Big\}, \quad &\, \widehat{\gE}^{(t)}_{\alpha_{\ell}}(v) \coloneqq \Big\{e \in \widehat{\gE}^{(t)}_{\alpha_{\ell}} : v \in e \Big\}\,,\label{eqn:Etl_alpha}\\
     \widehat{\gE}^{(t)}_{\beta_{\ell}} \coloneqq \gE_{\ell} \setminus \widehat{\gE}^{(t)}_{\alpha_{\ell}}, \quad &\, \widehat{\gE}^{(t)}_{\beta_{\ell}}(v) \coloneqq \Big\{e \in \widehat{\gE}^{(t)}_{\beta_{\ell}} : v \in e \Big\}.\label{eqn:Etl_beta}
\end{align}
\end{subequations}
Let $\widehat{N}^{(t)}_{\alpha_{\ell}}$ and $\widehat{N}^{(t)}_{\alpha_{\ell}}(v)$ denote the capacity of $\widehat{\gE}^{(t)}_{\alpha_{\ell}}$ and $\widehat{\gE}^{(t)}_{\alpha_{\ell}}(v)$ respectively, formally defined as
\begin{align}
    \widehat{N}^{(t)}_{\alpha_{\ell}} \coloneqq \binom{|\widehat{\gV}^{(t)}_{+}|}{\ell} + \binom{|\widehat{\gV}^{(t)}_{-}|}{\ell}, \quad \widehat{N}^{(t)}_{\alpha_{\ell}}(v) \coloneqq \binom{|\widehat{\gV}^{(t)}_{+}| - 1}{\ell - 1} + \binom{|\widehat{\gV}^{(t)}_{-}| - 1}{\ell - 1}. \label{eqn:Ntl_alpha}
\end{align}
Similarly, after iteration $t\geq 0$, let 
\begin{align}
    \widehat{N}^{(t)}_{\beta_{\ell}} \coloneqq \binom{N}{\ell} - \widehat{N}^{(t)}_{\alpha_{\ell}} \quad \textnormal{and} \quad \widehat{N}^{(t)}_{\beta_{\ell}}(v) \coloneqq \binom{N-1}{\ell - 1} - \widehat{N}^{(t)}_{\alpha_{\ell}}(v)\,,
\end{align}
denote the capacity of $\widehat{\gE}^{(t)}_{\beta_{\ell}}$ and $\widehat{\gE}^{(t)}_{\beta_{\ell}}(v)$ respectively.

First, we compute the following two quantities using all the in-cluster and cross-cluster edges, respectively,
\begin{align}
    \widehat{\alpha}_{\ell} = |\widehat{\gE}^{(0)}_{\alpha_{\ell}}|/ \widehat{N}^{(0)}_{\alpha_{\ell}}, \quad \widehat{\beta}_{\ell} = |\widehat{\gE}^{(0)}_{\beta_{\ell}}|/ \widehat{N}^{(0)}_{\beta_{\ell}}\,, \quad \ell \in \sL. \label{eqn:hatalphabeta}
\end{align}
which serve as estimates for the probabilities $\alpha_{\ell}$ and $\beta_{\ell}$ for each $\ell \in \sL$. Additionally, at each iteration $t\geq 0$, we estimate $\alpha_{\ell}$ and $\beta_{\ell}$ locally for each vertex $v$ using only the incident edges, defined as
\begin{align}
    \widehat{\alpha}^{(t)}_{\ell}(v) = |\widehat{\gE}^{(t)}_{\alpha_{\ell}}(v)|/ \widehat{N}^{(t)}_{\alpha_{\ell}}(v), \quad \widehat{\beta}^{(t)}_{\ell}(v) = |\widehat{\gE}^{(t)}_{\beta_{\ell}}(v)|/ \widehat{N}^{(t)}_{\beta_{\ell}}(v)\,,  \label{eqn:hatalphabeta_v_t}
\end{align}
where $\widehat{N}^{(t)}_{\alpha_{\ell}}(v)$ and $\widehat{N}^{(t)}_{\beta_{\ell}}(v)$ are defined in \eqref{eqn:Ntl_alpha}. Denote the \emph{cross-entropy} by
\begin{align}
    \mathbb{H}_{\mathrm{CE}}(y, \widehat{y}) \coloneqq - y\log \widehat{y} - (1 - y) \log (1 - \widehat{y}), \quad y, \widehat{y} \in [0, 1]. \label{eqn:cross-entropy}
\end{align}
Then for each $v\in \gV$, its alignment with the estimated partition at iteration $t$ is characterized by the following weighted cross-entropy terms:
\begin{subequations}
    \begin{align}
        \widehat{f}^{(t)}_{\mathrm{in}}(v) =&\, \sum_{\ell \in \sL} \widehat{N}^{(t)}_{\alpha_{\ell}}(v) \cdot \mathbb{H}_{\mathrm{CE}}(\widehat{\alpha}^{(t)}_{\ell}(v), \widehat{\alpha}_{\ell}) + \widehat{N}^{(t)}_{\beta_{\ell}}(v) \cdot \mathbb{H}_{\mathrm{CE}}(\widehat{\beta}^{(t)}_{\ell}(v), \widehat{\beta}_{\ell}) \label{eqn:fin}\\
        \widehat{f}^{(t)}_{\mathrm{cross}}(v) =&\, \sum_{\ell \in \sL} \widehat{N}^{(t)}_{\alpha_{\ell}}(v) \cdot \mathbb{H}_{\mathrm{CE}}(\widehat{\alpha}^{(t)}_{\ell}(v), \widehat{\beta}_{\ell}) + \widehat{N}^{(t)}_{\beta_{\ell}}(v) \cdot \mathbb{H}_{\mathrm{CE}}(\widehat{\beta}^{(t)}_{\ell}(v), \widehat{\alpha}_{\ell}) \label{eqn:fcross}
    \end{align}
\end{subequations}

With all the quantities defined above, we can now present the Algorithm \ref{alg:refinement_voting_binary}, inspired by \cite{Yun2016OptimalCR, Dumitriu2023OptimalAE}. The algorithm iteratively refines the initial partition $\widehat{\gV}^{(0)}_{+}$ and $\widehat{\gV}^{(0)}_{-}$ by flipping the sign of vertices based on the majority voting principle, and finally achieves strong consistency after $\lceil \log(N) \rceil$ iterations. Results concerning the performance of \Cref{alg:refinement_voting_binary} will be presented in \Cref{thm:achieveability_binary} (2) and \Cref{thm:optimal_recovery_binary} (2).

\begin{algorithm}
\caption{\textbf{Refinement via majority voting}}\label{alg:refinement_voting_binary}
\KwData{The tensors $\{\tA^{(\ell)}\}_{\ell \in \sL}$ and an initial estimation $\widehat{\rvy}^{(0)}$.}

{Compute $\widehat{\alpha}_{\ell}$ and $\widehat{\beta}_{\ell}$ defined in \eqref{eqn:hatalphabeta}.}

\For{$t = 1, \ldots, \lceil \log(N) \rceil$}{
    {Initialize $\widehat{\gV}^{(t)}_{+} \leftarrow \widehat{\gV}^{(t - 1)}_{+}$, $\widehat{\gV}^{(t)}_{-} \leftarrow \widehat{\gV}^{(t-1)}_{-}$.}
    
    \For{$v\in \gV$}{
        {Compute $\widehat{f}^{(t)}_{\mathrm{in}}$ and $\widehat{f}^{(t)}_{\mathrm{cross}}$ defined in \eqref{eqn:fin} and \eqref{eqn:fcross} respectively.}
        
        {Flip the sign of $v$ if $\widehat{f}^{(t)}_{\mathrm{in}} > \widehat{f}^{(t)}_{\mathrm{cross}}$, otherwise keep the sign unchanged.}    
    }
}
\KwResult{The final partition $\widehat{\gV}_{+} = \widehat{\gV}^{(\lceil \log(N) \rceil)}_{+}$, \,\,$\widehat{\gV}_{-} = \widehat{\gV}^{(\lceil \log(N) \rceil)}_{-}$.}
\end{algorithm}

The mechanism of Algorithm \ref{alg:refinement_voting_binary} can be intuitively understood as follows. The global parameters $\widehat{\alpha}_{\ell}$ and $\widehat{\beta}_{\ell}$ in \eqref{eqn:hatalphabeta} are estimated using all hyperedges (at least $N\log(N)$ of each type), ensuring sufficient concentration around their expectations. Consequently, they serve as reliable ``ground truth'' references in the cross-entropy term $\mathbb{H}_{\mathrm{CE}}(y, \widehat{y})$ defined in \eqref{eqn:cross-entropy}.
In contrast, the local estimates $\widehat{\alpha}^{(t)}_{\ell}(v)$ and $\widehat{\beta}^{(t)}_{\ell}(v)$ in \eqref{eqn:hatalphabeta_v_t} rely solely on hyperedges incident to $v$ (approximately $\log(N)$ edges). These local estimates align with their expectations $\alpha_{\ell}$ and $\beta_{\ell}$ only if $v$ is correctly classified. Since the cross-entropy $\mathbb{H}_{\mathrm{CE}}(y, \widehat{y})$ is minimized when $\widehat{y}$ approaches $y$, a smaller value of $\widehat{f}^{(t)}_{\mathrm{in}}(v)$ (resp. $\widehat{f}^{(t)}_{\mathrm{cross}}(v)$) indicates a stronger alignment of $v$'s neighbors with the current (resp. opposite) community partition.

\subsection{Strong consistency after refinement}
Below, we present performance guarantees for two refinement algorithms, demonstrating that both achieve strong consistency provided that condition \eqref{eqn:achievabilityGH} is satisfied.
\begin{theorem}[Achievability]\label{thm:achieveability_binary}
Under Assumption \ref{ass:asymptotics_binary}, for some absolute $\epsilon >0$, suppose 
\begin{align}
    \liminf_{N \to \infty} \,\, \D_{\rm{GH}}\cdot q_{N} /\log(N) \geq  1 + \epsilon. \label{eqn:achievabilityGH}
\end{align}
\begin{enumerate}
    \item \Cref{alg:refinement_spectral} achieves strong consistency with probability $1 - N^{-\epsilon}$.
    \item Algorithm \ref{alg:refinement_voting_binary} attains strong consistency with probability at least $1 -6N^{-\epsilon}$.
\end{enumerate}

\end{theorem}
Note that \eqref{eqn:achievabilityGH} can be reduced to $\D_{\rm{GH}} > 1 + \epsilon$ when $q_{N} = \log(N)$. \Cref{thm:impossibility_binary} and \Cref{thm:achieveability_binary} establish the sharp phase transition for the achievability of exact recovery. The same threshold was established for the graph case $\sL = \{2\}$ in \cite{Abbe2016ExactRI}. For the $\ell$-uniform hypergraph, the necessity was proved by \cite{Kim2018StochasticBM}, and the regime $\D_{\rm{AM}} > 1$ was covered by \cite{Gaudio2023CommunityDI}. We fill the gap when $\D_{\rm{AM}} < 1 < \D_{\rm{GH}}$ (\textcolor{black}{green}) and establish the threshold for a more general case. 

\paragraph{\textbf{Benefits of aggregation}} Consider the model \ref{def:non_uniform_HSBM_binary} with $\sL = \{2, 3\}$, where $a_2 = 4$, $b_2 = 1$, and the divergence for the graph layer $\ell = 2$ can be calculated as $\D^{(2)}(a_2, b_2) = (\sqrt{a_2} - \sqrt{b_2})^2/2 = 1/2$. When the divergence for the layer $\ell = 3$ satisfies $\D^{(3)}(a_3, b_3) > 1/2$, the entire non-uniform hypergraph satisfies $\D_{\mathrm{GH}} > 1$ according to \eqref{eqn:DGH_binary}, and the strong consistency can be efficiently achieved. Compared with the $3$-uniform HSBM, where the strong consistency can be achieved if and only if $\D^{(3)}(a_3, b_3) > 1$, the difficulty of the task is reduced when different layers are aggregated. Unlike the constant expected degree scenario \cite{Dumitriu2025PartialRA} where a subset selection is necessary to enhance the signal-to-noise ratio, here all layers contribute non-negatively to clustering accuracy.
 
\subsection{Optimality of refinement algorithms}
We now establish the optimality of the two-stage algorithms consisting of Algorithm \ref{alg:spectral_partition_binary} and Algorithm \ref{alg:refinement_voting_binary} in terms of minimizing the expected mismatch ratio $\E \eta_{N}(\rvy, \widehat{\rvy})$.
\begin{theorem}[Optimality]\label{thm:optimal_recovery_binary}
    Suppose Assumption \ref{ass:asymptotics_binary} and the condition \eqref{eqn:impossibility_condition_binary}.
\begin{enumerate}
        \item[(1)] The estimator $\widehat{\rvy}$ by \Cref{alg:refinement_spectral} satisfies
        \begin{align}
            \limsup_{N\to \infty} q_{N}^{-1}\log \E \eta_{N}(\widehat{\rvy}, \rvy) \leq - \D_{\rm{GH}} \label{eqn:optimality_refinement}
        \end{align}
        
        \item[(2)] For the sequence $\kappa_{N} \in (0, 1]$ with $\kappa_{N} \log(N) \to \infty$ as $N \to \infty$, further suppose the following holds
        \begin{align}
        \lim\limits_{N \to \infty} \D_{\mathrm{GH}}\cdot q_{N} /(\kappa_{N}\cdot \log(N)\, ) \geq 1 +  \epsilon,\label{eqn:optimality_condition}
        \end{align}
        for some absolute (small) $\epsilon >0$. Then with probability at least $1 -6e^{-\epsilon \cdot \kappa_{N} \log(N)}$, the estimator $\widehat{\rvy}$ by \Cref{alg:refinement_voting_binary} satisfies 
        \begin{align}
            \lim\limits_{N \to \infty} (N^{\kappa_{N}} \cdot \mismatch_{N}) \leq 1. \notag
        \end{align}
    \end{enumerate}
\end{theorem}
Both Algorithms \ref{alg:refinement_spectral} and \ref{alg:refinement_voting_binary} achieve the lower bound on expected mismatch ratio established in \Cref{thm:IT_lower_bound_binary}, thus proving their optimality. It is worth noting that \Cref{alg:refinement_spectral} requires a milder condition on $\D_{\rm{GH}}$ compared to \Cref{alg:refinement_voting_binary} to attain the IT lower bound. Specifically, \Cref{alg:refinement_spectral} only necessitates $\D_{\rm{GH}} > 1$, while \Cref{alg:refinement_voting_binary} demands a more stringent condition as specified in \eqref{eqn:optimality_condition}. This discrepancy arises from the distinct mechanisms employed by the two algorithms for refinement.

%%%%%%%%%%%%%%%%%%%%%%%%%%%%%%%%%%%%%%%%%%%%%%%%%%%%%%%%%%%%
%%                     Impossibility                      %%
%%%%%%%%%%%%%%%%%%%%%%%%%%%%%%%%%%%%%%%%%%%%%%%%%%%%%%%%%%%%
\section{Proof outline of Information-Theoretic limits}\label{sec:information_theoretic_limits_binary}
We present the proof outlines of Theorems \ref{thm:impossibility_binary} and \ref{thm:IT_lower_bound_binary}. The proofs of Lemmas are deferred to \Cref{app:impossibility_binary}.

\subsection{Impossibility of the strong consistency}\label{sec:impossibility_binary}
Let $\widehat{\rvy}$ denote an estimation of $\rvy$ based on the observed hypergraph $\gH$ where $\mathbb{H}$ denotes its law. The probability of failing exact recovery is given by
\begin{align}
    \P_{\mathrm{fail}} \coloneqq \P( \widehat{\rvy} \neq \pm \rvy) = \sum_{\gH} [ 1 - \P( \widehat{\rvy} = \pm \rvy \mid \mathbb{H} = \gH) ]\cdot \P(\mathbb{H} = \gH)\,. \label{eqn:P_fail_binary}
\end{align}
The \emph{Maximum A Posteriori} (MAP) estimator is the best possible estimator in the sense that it minimizes the failure probability $ \P_{\mathrm{fail}}$ in \eqref{eqn:P_fail_binary} among all estimators, which is defined as
\begin{align}
    \widehat{\rvy}_{\mathrm{MAP}}\coloneqq \underset{\rvz \in \{\pm 1\}^{|\gV|}, \, \ones^{\sT}\rvz = 0}{\arg\max}\, \P(\rvy = \rvz \mid \mathbb{H} = \gH)\,.\label{eqn:MAP_binary}
\end{align}
Note that $\widehat{\rvy}_{\mathrm{MAP}}$ is the most likely assignment of $\rvy$ given the observed hypergraph $\gH$. Consequently, no other estimator could succeed exact recovery if $\widehat{\rvy}_{\mathrm{MAP}}$ fails. Furthermore, $\widehat{\rvy}_{\mathrm{MAP}}$ corresponds to the \emph{Maximum Likelihood Estimation} (MLE) since by Bayes' Theorem,
 \begin{align}
 \widehat{\rvy}_{\mathrm{MLE}} & = \underset{\rvz \in \{\pm 1\}^{|\gV|}, \, \ones^{\sT}\rvz = 0}{\arg\max}\,  \P(\mathbb{H} = \gH \mid \rvy = \rvz ) \label{eqn:MLE}\\
 & = \underset{\rvz \in \{\pm 1\}^{|\gV|}, \, \ones^{\sT}\rvz = 0}{\arg\max}\,  \frac{\P(\mathbb{H} = \gH \mid \rvy = \rvz ) \P(\rvy = \rvz)}{\sum\limits_{\rvx \in \{\pm 1\}^{|\gV|},\,\, \ones^{\sT}\rvx = 0} \P(\mathbb{H} = \gH \mid \rvy = \rvx ) \P(\rvy = \rvx)  } \notag \\
 & = \underset{\rvz \in \{\pm 1\}^{|\gV|}, \, \ones^{\sT}\rvz = 0}{\arg\max}\, \P( \rvy = \rvz \mid \mathbb{H} = \gH) = \widehat{\rvy}_{\mathrm{MAP}} \notag
 \end{align}
where the second equality holds since the denominator in the second line is a constant irrelevant to any specific assignment $\rvz$ which can be factored out, and the prior distribution of $\rvy$ is uniform with $\P(\rvy = \rvz) = 1/\binom{|\gV|}{|\gV|/2}$. While MLE provides the optimal estimator in theory, it remains computationally intractable in practice. To make the objective function of the MLE more explicit, we introduce the notion of an \emph{in-cluster} edge.
\begin{definition}[In-cluster edge]\label{def:incluster} 
    The edge $e = \{i_1, \ldots, i_{\ell}\}$ is \emph{in-cluster} with respect to $\rvy$ if $\ervy_{i_1} = \ldots = \ervy_{i_{\ell}}$.
\end{definition}

\begin{lemma}\label{lem:MLE_maximize}
    Let $\gH$ be a hypergraph sampled from model \ref{def:non_uniform_HSBM_binary}. For each $\ell \in \sL$, let $[\gV]^{\ell} \coloneqq \{\setS| \setS\subseteq \gV, |\setS| = \ell \}$ denote the set of $\ell$-subsets of $\gV$, and suppose $\alpha_{\ell} >\beta_{\ell}$. Then, among all the vectors $\rvz\in \{\pm 1\}^{|\gV|}$ with $\ones^{\sT}\rvz = 0$, $\widehat{\rvy}_{\mathrm{MLE}}$ maximizes the following quantity
    \begin{align}
        f(\rvz|\gH) \coloneqq \sum_{\ell \in \sL} \log \Big( \frac{\alpha_{\ell}(1 - \beta_{\ell})}{\beta_{\ell}(1 - \alpha_{\ell})} \Big) \cdot \sum_{e\in [\gV]^{\ell}} \indi{e \textnormal{ is in-cluster w.r.t. } \rvz} \cdot \etA^{(\ell)}_{e}\label{eqn:MLE_maximize_binary}
    \end{align}
    Alternatively, $\widehat{\rvy}_{\mathrm{MLE}}$ minimizes $f(\rvz|\gH)$ when $\alpha_{\ell} < \beta_{\ell}$ (disassortative) for each $\ell \in \sL$.
\end{lemma}

According to the discussions in \eqref{eqn:MAP_binary}, \eqref{eqn:MLE} and \eqref{eqn:MLE_maximize_binary}, \rm{MLE} fails exact recovery when $\widehat{\rvy}_{\mathrm{MLE}}$  returns some $\widetilde{\rvy}$, such that $f(\widetilde{\rvy}|\gH) \geq f(\rvy|\gH)$ but $\widetilde{\rvy}\neq \pm \rvy$. Formally,
\begin{align}\label{eqn:fail_probability}
    \P_{\mathrm{fail}} \coloneqq \P(\widehat{\rvy}_{\mathrm{MLE}} \neq \pm \rvy) = \P(\exists \widetilde{\rvy} \neq \rvy,\, \textnormal{s.t.} \, f(\widetilde{\rvy}|\gH) \geq f(\rvy|\gH)\,).
\end{align}
We construct $\widetilde{\rvy}$ from the true label vector $\rvy$ by flipping the signs of two vertices $a\in \gV_{+}$ and $b\in \gV_{-}$, while keeping the remaining signs unchanged. Specifically, for any $a\in \gV_{+}$ and $b\in \gV_{-}$, we define the vector $\widetilde{\rvy}\in \{\pm 1\}^{|\gV|}$ as
\begin{align}
    \widetilde{\ervy}_v = \begin{cases}
        \ervy_v, & v \in \gV \setminus \{a, b\}, \\
        -\ervy_v, & v \in \{a, b\}
         \end{cases}
        \label{eqn:tilde_rvy}
\end{align}
We will show that, under condition \eqref{eqn:impossibility_condition_binary}, $\widehat{\rvy}_{\rm{MLE}}$ returns $\widetilde{\rvy}$ rather than $\rvy$ with probability tending to $1$.
\begin{remark}
In contrast, the approach in \cite{Dumitriu2023OptimalAE} does not directly analyze the failure of the optimal estimator. Instead, it establishes the existence of $2q_{N}$ pairwise disconnected ambiguous vertices (half from $\gV_{+}$ and half from $\gV_{-}$) under condition \eqref{eqn:impossibility_condition_binary}. These vertices have identical degree profiles, making them indistinguishable; thus, no algorithm can outperform random guessing, resulting in the failure of exact recovery.
\end{remark}

\begin{proof}[Proof of \Cref{thm:impossibility_binary}]
Recall the definition of $\gE_{\ell}^{(r)}$ in \eqref{eqn:edge_classification}. According to \eqref{eqn:MLE_maximize_binary} and \eqref{eqn:tilde_rvy}, the following holds:
\begin{align}
    &\, f(\rvy|\gH) - f(\widetilde{\rvy}|\gH) \notag\\
    = &\,\sum\limits_{\ell \in \sL} \log \Big( \frac{\alpha_{\ell}(1 - \beta_{\ell})}{\beta_{\ell}(1 - \alpha_{\ell})} \Big)\cdot \Big[  \sum\limits_{e\in \gE_{\ell}} \indi{e \textnormal{ is in-cluster w.r.t. } \rvy} - \sum\limits_{e\in \gE_{\ell}} \indi{e \textnormal{ is in-cluster w.r.t. } \widetilde{\rvy}} \Big]\notag\\
    =&\, \sum\limits_{\ell \in \sL} \log \Big( \frac{\alpha_{\ell}(1 - \beta_{\ell})}{\beta_{\ell}(1 - \alpha_{\ell})} \Big)\cdot \Big[ (|\gE^{(0)}_{\ell}(a)| + |\gE^{(0)}_{\ell}(b)|) - (|\gE^{(\ell - 1)}_{\ell}(a)| + |\gE^{(\ell - 1)}_{\ell}(b)|)\Big]\notag\\
    =&\, \rW_{a} + \rW_{b}\,\,,\label{eqn:diff_MLE_function}
\end{align}
where for each $v\in \gV$, the random variable $\rW_{v}$ is defined as
\begin{align}
    \rW_{v} \coloneqq &\, \sum_{\ell \in \sL} \log \Big( \frac{\alpha_{\ell}(1 - \beta_{\ell})}{\beta_{\ell}(1 - \alpha_{\ell})} \Big)\cdot \Big( \sum_{ e\in \gE^{(0)}_{\ell}(v) } \etA^{(\ell)}_{e} - \sum_{ e\in \gE^{(\ell -1)}_{\ell}(v) } \etA^{(\ell)}_{e} \Big). \label{eqn:Wv}
\end{align}
Note that the event $\{\rW_{a} \leq 0\} \cap \{\rW_{b} \leq 0\}$ guarantees $f(\widetilde{\rvy}|\gH) \geq f(\rvy|\gH)$, which yields the following lower bound on the failure probability $\P_{\mathrm{fail}}$ in \eqref{eqn:fail_probability}:
\begin{align}
    \P_{\mathrm{fail}} \geq &\, \P( \exists a\in \gV_{+}, b \in \gV_{-},\, \mathrm{ s.t. }\, \{ \rW_{a} \leq 0\} \cap \{\rW_{b} \leq 0\}\,)\notag \\
    = &\, \P \Big( \underset{a \in \gV_{+}}{\cup} \{\rW_{a} \leq 0\} \bigcap \underset{b \in \gV_{-}}{\cup} \{\rW_{b} \leq 0\} \Big). \label{eqn:lower_bound_Pfail}
\end{align}
However, directly analysis on \eqref{eqn:lower_bound_Pfail} is challenging because $\rW_{a}$ and $\rW_{b}$ are dependent, as there may exist hyperedges containing both $a \in \gV_{+}$ and $b \in \gV_{-}$. Similarly, $\rW_{a}$ and $\rW_{a^{\prime}}$ are dependent for $a, a^{\prime} \in \gV_{+}$.

To address this dependency, we further partition the hyperedges involved in $\rW_{v}$ and introduce pairwise independent random variables. This allows us to lower bound the failure probability using combinations of these new variables. Specifically, let $\gU \subset \gV$ be a subset of vertices of size $\gamma_{N}|\gV|$, with $|\gU \cap \gV_{+}| = |\gU \cap \gV_{-}| = \gamma_{N} |\gV|/2$ and $\gamma_{N}=o(1)$. For $a \in \gV_{+} \cap \gU$ and $b \in \gV_{-} \cap \gU$, define
\begin{subequations}
    \begin{align}
         &\, \rW_{a}[\gU] \label{eqn:WaU} \\
        \coloneqq &\, \sum_{\ell \in \sL} \log \frac{\alpha_{\ell}(1 - \beta_{\ell})}{\beta_{\ell}(1 - \alpha_{\ell})} \cdot \Big( \sum_{ e\in \gE^{(0)}_{\ell}(a) } \etA^{(\ell)}_{e}\indi{\substack{e\cap \gU = \{a\}\\e\setminus\{a\} \subset [\gV_{+} \setminus \gU]}} - \sum_{ e\in \gE^{(\ell - 1)}_{\ell}(a) } \etA^{(\ell)}_{e} \indi{\substack{e\cap \gU = \{a\}\\e\setminus\{a\} \subset [\gV_{-} \setminus \gU]}} \Big),\notag\\
         &\, \rW_{b}[\gU] \label{eqn:WbU} \\
        \coloneqq &\, \sum_{\ell \in \sL} \log \frac{\alpha_{\ell}(1 - \beta_{\ell})}{\beta_{\ell}(1 - \alpha_{\ell})} \cdot \Big( \sum_{ e\in \gE^{(0)}_{\ell}(b) } \etA^{(\ell)}_{e}\indi{\substack{e\cap \gU = \{b\}\\e\setminus\{b\} \subset [\gV_{-} \setminus \gU]}} - \sum_{ e\in \gE^{(\ell - 1)}_{\ell}(b) } \etA^{(\ell)}_{e} \indi{\substack{e\cap \gU = \{b\}\\e\setminus\{a\} \subset [\gV_{+} \setminus \gU]}} \Big). \notag
    \end{align}
\end{subequations}
Clearly, after introducing $\gU$, the edge sets involved in $\rW_{a}[\gU]$ and $\rW_{b}[\gU]$ are disjoint, thus independent. Furthermore, we define the edge set
\begin{align}
    \gW_{\ell}^{(\geq 2)}\coloneqq \{e\in \gE_{\ell} \mid e \textnormal{ contains at least 2 vertices in } \gU \}\,\,. \label{eqn:edgeWm}
\end{align}
It is clear that $\gW_{\ell}^{(\geq 2)}\subset \gE_{\ell}$. For each $v\in \gU$, define the random variable $\rJ_{v}[\sU]$ by
\begin{align}
    \rJ_{v}[\sU] \coloneqq \sum\limits_{\ell \in \sL} \log \Big( \frac{\alpha_{\ell}(1 - \beta_{\ell})}{\beta_{\ell}(1 - \alpha_{\ell})} \Big)\cdot \sum\limits_{e \in \gW_{\ell}^{(\geq 2)}} \etA^{(\ell)}_{e} \indi{e \ni v, \,\, v\in \gU}\,\,. \label{eqn:rJv}
\end{align}
Then for some $\zeta_{N} = o(1)$, we define the random variable $\rJ[\gU]$ as
\begin{align}
    \rJ[\gU] \coloneqq \bigcap_{v\in \gU} \indi{\rJ_{v}[\sU] \leq \zeta_{N} q_{N}}. \label{eqn:rJU}
\end{align}
Lemma \ref{lem:independent_WaWbJ} proves that $\rW_{a}[\gU]$, $\rW_{b}[\gU]$ and $\rJ[\gU]$ are jointly independent.
\begin{lemma}\label{lem:independent_WaWbJ}
    For $a \in \gV_{+} \cap \gU$ and $b \in \gV_{-} \cap \gU$, the random variables $\rW_{a}[\gU]$ and $\rW_{b}[\gU]$, defined in \eqref{eqn:WaU} and \eqref{eqn:WbU} respectively, are independent. Meanwhile, $\rJ[\sU]$, defined in \eqref{eqn:rJU}, is independent of $\rW_{a}$ and $\rW_{b}$. For distinct $a, a^{\prime}\in \gV_{+} \cap \gU$, $\rW_{a}$ and $\rW_{a^{\prime}}$ are independent. Furthermore, the following holds
    \begin{align}
        \{\rW_{a}[\gU] \leq -\zeta_{N} q_{N}\} \cap \{\rJ[\gU] = 1\} &\, \implies \{\rW_{a} \leq 0 \},\notag\\
        \{\rW_{b}[\gU] \leq -\zeta_{N} q_{N}\} \cap \{\rJ[\gU] = 1\}  &\, \implies \{\rW_{b} \leq 0\}, \notag
    \end{align}
    where the random variables $\rW_{v}$ is defined in \eqref{eqn:Wv} for $v\in \gV$.
\end{lemma}
Combining the discussion in \eqref{eqn:lower_bound_Pfail} and Lemma \ref{lem:independent_WaWbJ}, $\P_{\rm{fail}}$ can be further bounded by
\begin{align}
    \P_{\rm{fail}}
    \geq &\, \P \Big( \underset{a \in \gV_{+}}{\cup} \{\rW_{a} \leq 0 \} \bigcap \underset{b \in \gV_{-}}{\cup} \{\rW_{b} \leq 0 \} \Big) \notag\\
    \geq &\, \P \Big( \underset{a \in \gV_{+} \cap \gU }{\cup} \{\rW_{a} \leq 0\} \bigcap  \underset{b \in \gV_{-} \cap \gU}{\cup} \{\rW_{b} \leq 0\} \Big) \notag\\
    \geq &\, \P \Big( \underset{a \in \gV_{+} \cap \gU }{\cup} \{\rW_{a} \leq 0\} \bigcap  \underset{b \in \gV_{-} \cap \gU}{\cup} \{\rW_{b} \leq 0\} \mid \rJ[\gU] = 1\Big)\cdot \P(\rJ[\gU] = 1) \notag\\
    \geq &\, \P \Big( \underset{a \in \gV_{+} \cap \gU }{\cup} \{\rW_{a}[\gU] \leq -\zeta_{N} q_{N}\} \bigcap  \underset{b \in \gV_{-} \cap \gU}{\cup} \{\rW_{b}[\gU] \leq -\zeta_{N} q_{N}\} \mid \rJ[\gU] = 1\Big)\cdot \P(\rJ[\gU] = 1)\notag\\
    \geq &\, \P \Big( \underset{a \in \gV_{+} \cap \gU }{\cup} \{\rW_{a}[\gU] \leq -\zeta_{N} q_{N}\} \Big) \cdot \P\Big(\underset{b \in \gV_{-} \cap \gU}{\cup} \{\rW_{b}[\gU] \leq -\zeta_{N} q_{N}\} \Big) \cdot \P(\rJ[\gU] = 1)\notag\\
    = &\, \Big( 1 - \prod_{a\in \gV_{+} \cap \gU} \P( \rW_{a}[\gU] > -\zeta_{N} q_{N} )  \Big) \cdot \notag \\
    &\, \quad \quad \quad \quad \quad \quad \Big( 1 - \prod_{b\in \gV_{-} \cap \gU} \P( \rW_{b}[\gU] > -\zeta_{N} q_{N} ) \Big) \cdot \Big( 1 - \P(\rJ[\gU] = 0) \Big). \label{eqn:lower_bound_Pfail_second}
\end{align}
The lemma below bounds the probabilities appearing in \eqref{eqn:lower_bound_Pfail_second}, a key step for the subsequent analysis.

\begin{lemma}\label{lem:Wab_J_prob} 
Recall $\D_{\rm{GH}}$ in \eqref{eqn:DGH_binary} and $\rW_{a}[\gU], \rJ[\sU]$ in \eqref{eqn:WaU}, \eqref{eqn:rJU} respectively. The following holds
\begin{align}
        &\, \prod_{a\in \gV_{+} \cap \gU} \P( \rW_{a}[\gU] > -\zeta_{N} q_{N} ) \leq \exp \Big(-\frac{\gamma_{N} \cdot |\gV|}{2} \cdot e^{-\D_{\rm{GH}}\cdot q_{N}} \Big), \label{eqn:Wab_upper_prob}\\
        &\, \P(\rJ[\sU] = 0) \leq \exp\Big( \log(\gamma_{N}N) - q_{N} \cdot \big[\zeta_{N} \log(\zeta_{N}/\gamma_{N}) - (\zeta_{N} - \gamma_{N}) \big]\Big).\label{eqn:JU_upper_prob}
\end{align}
The probabilitiy $\prod_{b\in \gV_{-} \cap \gU} \P( \rW_{b}[\gU] > -\zeta_{N} q_{N} )$ admits the same upper bound as in \eqref{eqn:Wab_upper_prob}.
\end{lemma}
Under the condition \eqref{eqn:impossibility_condition_binary}, there exists some absolute small $\epsilon > 0$ such that $\D_{\rm{GH}}\cdot q_{N} / \log(N) \leq 1 - \epsilon$, thus $Ne^{-\D_{\rm{GH}}\cdot q_{N}} \geq N^{\epsilon}$. Then by choosing $\zeta_{N} = 1/\log(q_{N})$, $\gamma_{N} = N^{-\epsilon/2}$ and substituting \eqref{eqn:Wab_upper_prob} and \eqref{eqn:JU_upper_prob} into \eqref{eqn:lower_bound_Pfail_second}, we obtain
\begin{align}
    \P_{\mathrm{fail}} \geq &\, 1 - 2 \prod_{a\in \gV_{+} \cap \gU} \P( \rW_{a}[\gU] > -\zeta_{N} q_{N} ) \notag \\
    &\, \quad \quad \quad \quad - \P(\rJ[\sU] = 0) - \P(\rJ[\sU] = 0)\cdot \prod_{a\in \gV_{+} \cap \gU} [\P( \rW_{a}[\sU] > -\zeta_{N} q_{N} ) ]^{2} \notag \\
    \geq &\, 1 - 2\exp \Big(-\frac{\gamma_{N} \cdot N}{2} \cdot e^{-\D_{\rm{GH}}\cdot q_{N}} \Big) - \exp\Big( \log(\gamma_{N}N) - q_{N} \cdot \big[\zeta_{N} \log(\zeta_{N}/\gamma_{N}) - (\zeta_{N} - \gamma_{N}) \big]\Big) \notag \\
    &\,\quad \quad \quad \quad - \exp\Big( \log(\gamma_{N}N) - q_{N} \cdot \big[\zeta_{N} \log(\zeta_{N}/\gamma_{N}) - (\zeta_{N} - \gamma_{N}) \big] - \gamma_{N} N \cdot e^{-\D_{\rm{GH}}\cdot q_{N}}\Big) \notag \\
    \geq &\, 1 - 2\exp(-N^{\epsilon/2}/2) \notag\\
    &\, \quad \quad \quad \quad - 2\exp\Big( -q_{N} \cdot \Big[-\big(1 - \frac{\epsilon}{2}\big)\frac{\log(N)}{q_{N}} + \frac{\epsilon\log(N)}{2\log(q_{N})} - \frac{\log\log(q_{N}) + 1}{\log(q_{N})} + N^{-\epsilon/2}\Big]\Big)\notag \\
    = &\, 1 - 2\exp(-N^{\epsilon/2}/2) \notag \\
    &\, \quad \quad \quad \quad - 2\exp\Big( -q_{N} \cdot \Big[ \frac{\epsilon q_{N} - (2-\epsilon)\log(q_{N})}{2q_{N}\log(q_{N})}\log(N) - \frac{\log\log(q_{N}) + 1}{\log(q_{N})} + N^{-\epsilon/2}\Big]\Big)\notag 
\end{align}
where the last term vanishes since $1\ll q_{N} \lesssim \log(N)$ by Assumption \ref{ass:asymptotics_binary} and $\epsilon q_{N} - (2-\epsilon)\log(q_{N}) > 0$ for fixed $\epsilon >0$ and sufficiently large $N$. Consequently, $\P_{\mathrm{fail}} \to 1$ as $N \to \infty$, implying that $\widehat{\rvy}_{\mathrm{MLE}}$ in \eqref{eqn:MLE} fails to recover the true assignment $\rvy$ exactly. Therefore, with probability tending to $1$, it is impossible to finish exact recovery, since no other estimator could outperform $\widehat{\rvy}_{\rm{MLE}}$.
\end{proof}

\subsection{Information-Theoretic lower bound on mismatch ratio}
The proof of \Cref{thm:impossibility_binary} analyzes the optimal estimator $\widehat{\rvy}_{\mathrm{MLE}}$ for the full label vector $\rvy$. In contrast, the proof of \Cref{thm:IT_lower_bound_binary} considers the optimal estimator for each individual label $\ervy_{v}$, and derives the expected number of misclassified vertices by estimating the corresponding error probability. 
For each vertex $v\in \gV$, let $\rvy_{-v} \in \{\pm 1\}^{|\gV|-1}$ denote the label vector of nodes other than $v$. Following the approach in \cite{Abbe2022LPT}, we consider the \emph{genie-aided} estimator
\begin{align}
    \widehat{\ervy}_v^{\,\,\mathrm{genie}} = \underset{s = \pm 1} {\arg \max}~\P \big( \ervy_v = s|\{\tA^{(\ell)}\}_{\ell \in \sL}, \rvy_{-v} \big), \label{eqn:genie-aided}
\end{align}
where $\P \big( \ervy_v = s|\{\tA^{(\ell)}\}_{\ell \in \sL}, \rvy_{-v} \big)$ denotes the \emph{posterior} probability conditioned on $\{\tA^{(\ell)}\}_{\ell \in \sL}$ and $\rvy_{-v}$. However, the estimator in \eqref{eqn:genie-aided} is not directly computable or analyzable, since evaluating the posterior probabilities requires access to the true label vector $\rvy$ in order to compute the likelihood $\P(\{\tA^{(\ell)}\}_{\ell \in \sL}|\ervy_v = s, \rvy_{-v})$, which is unavailable in practice. To address this challenge, Lemma \ref{lem:genie_approximation} reformulates the problem in terms of the \emph{log-odds} ratio, which is more tractable for analysis. Lemma \ref{lem:fundamental_limit_genie-aided} then connects the performance of any estimator to the error probability of the genie-aided estimator.

\begin{lemma}\label{lem:genie_approximation}
    Under Assumption \ref{ass:asymptotics_binary}, the following holds for each $v \in \gV$
    \begin{align}
        \log \bigg( \frac{\P \big( \ervy_v = s|\{\tA^{(\ell)}\}_{\ell \in \sL}, \rvy_{-v} \big)}{\P \big( \ervy_v = -s|\{\tA^{(\ell)}\}_{\ell \in \sL}, \rvy_{-v} \big)} \bigg) = \rW_{v}\,\,, \quad s\in \{\pm 1\},\notag
    \end{align}
    where the variable $\rW_{v}$ is defined in \eqref{eqn:Wv}.
\end{lemma}

\begin{lemma}[Fundamental limit via genie-aided approach, {\cite[Lemma F.3]{Abbe2022LPT}}]\label{lem:fundamental_limit_genie-aided}
Let $\sX$ be a Borel space and $(\rvy, \rmX)$ be a random element in $\{\pm 1\}^{|\gV|} \times \sX$. Let $\sF$ be a family of Borel mappings from $\sX$ to $\{\pm 1\}^{|\gV|}$. Recall the mismatch ratio $\eta_{N}$ defined in \eqref{eqn:misratio_binary} for any two $\rvy, \widehat{\rvy} \in \{\pm 1\}^{|\gV|}$. For each $v\in [N]$, let $\P \big( \ervy_v = s |\rmX, \rvy_{-v} \big)$ with $s\in \{\pm 1\}$ denote the \emph{posterior} probability conditioned on $\rmX \in \sX$ and $\rvy_{-v} \in \{\pm 1\}^{N-1}$. Then for any estimator $\widehat{\rvy} \in \sF$, the following holds
    \begin{align}
        \E \eta_{N}(\widehat{\rvy}, \rvy) \geq \frac{N-1}{3N-1} \cdot \frac{1}{N}\sum_{v\in [N]}\P\Big( \frac{\P(\ervy_v = s|\rmX, \rvy_{-v})}{\P(\ervy_v = -s|\rmX, \rvy_{-v})} < 1\Big).\notag
    \end{align}
\end{lemma}

\begin{proof}[Proof of \Cref{thm:IT_lower_bound_binary}]
    In our setting, we apply Lemma \ref{lem:fundamental_limit_genie-aided} with $\rmX = \{\tA^{(\ell)}\}_{\ell \in \sL}$. By combining this with Lemma \ref{lem:genie_approximation} and leveraging the symmetry among vertices, we obtain the following lower bound for any estimator $\widehat{\rvy}$:
    \begin{align}
        \E \eta_{N}(\widehat{\rvy}, \rvy) \geq &\, \frac{N-1}{3N-1} \cdot \frac{1}{N}\sum_{v\in [N]}\P\Big( \frac{\P(\ervy_v = s|\{\tA^{(\ell)}\}_{\ell \in \sL}, \rvy_{-v})}{\P(\ervy_v = -s|\{\tA^{(\ell)}\}_{\ell \in \sL}, \rvy_{-v})} < 1\Big)
        =\frac{N-1}{3N-1} \cdot \P( \rW_{v} < 0). \notag
    \end{align}
We now consider the probability of the event $\{\rW_{v}\leq -\zeta_{N} q_{N}\}$, where $\zeta_{N} = 1/\log(q_{N})$. Following the same LDP analysis as in Lemma \ref{lem:binom_difference_prob} and Lemma \ref{lem:LDP_binomial}, we have
    \begin{align}
      \liminf_{N\to \infty} q_{N}^{-1} \log \big( \E \eta_{N}(\widehat{\rvy}, \rvy) \big) \geq &\, \liminf_{N\to \infty} q_{N}^{-1} \log\big( \P(\rW_{v} < 0) \big) \notag\\
      \geq &\, \liminf_{N\to \infty} q_{N}^{-1} \log\big(\P(\rW_{v} < -\zeta_{N} q_{N}) \big) = -\D_{\rm{GH}},\notag
    \end{align}
    which completes the proof.
\end{proof}

%%%%%%%%%%%%%%%%%%%%%%%%%%%%%%%%%%%%%%%%%%%%%%%%%%%%%%%%%%%%
%%                     Achievability                      %%
%%%%%%%%%%%%%%%%%%%%%%%%%%%%%%%%%%%%%%%%%%%%%%%%%%%%%%%%%%%%
\section{Performance of the one-stage spectral method}\label{sec:achievability_matrices_binary}
This section is devoted to the proof outlines of Theorems \ref{thm:achievability_matrices} and \ref{thm:optimality_matrices} with details deferred to \Cref{app:achievability_matrices_binary}.
\subsection{Preliminary results}
For each $\ell \in \sL$, let $\rmA^{(\ell)}$ denote the adjacency matrix obtained from tensor $\tA^{(\ell)}$. For $u, v \in \gV$, the expected number of $\ell$-edges containing $u$ and $v$ can be expressed as
\begin{itemize}
    \item $\ervy_{u} = \ervy_{v}$: $\E \ermA_{uv}^{(\ell)} = \alpha_{\ell} \binom{N/2-2}{\ell - 2} + \beta_{\ell} [ \binom{N-2}{\ell - 2} - \binom{N/2-2}{\ell - 2}]$.
    \item $\ervy_{u} \neq \ervy_{v}$: $\E \ermA_{uv}^{(\ell)} = \beta_{\ell} \binom{N-2}{\ell - 2}$.
\end{itemize}
For convenience, we denote
\begin{align}
    \alpha = &\, \sum_{\ell \in \sL} \Bigg[\alpha_{\ell} \binom{N/2-2}{\ell - 2} + \beta_{\ell} \Big[ \binom{N-2}{\ell - 2} - \binom{N/2-2}{\ell - 2}\Big] \Bigg], \quad \beta = \sum_{\ell \in \sL} \beta_{\ell} \binom{N-2}{\ell - 2}.\label{eqn:alpha_beta_def}
\end{align}
Let $\rmA^{\star}$ denote the \emph{expected adjacency matrix}, which can be expressed as
        \begin{align}
            \rmA^{\star} \coloneqq &\, \E[\rmA] = \sum_{\ell \in \sL}\E[\rmA^{(\ell)}] = \begin{bmatrix}
            \alpha &\, \beta \\
            \beta &\, \alpha
            \end{bmatrix} \otimes (\ones_{N/2} \ones_{N/2}^{\sT}) - \alpha \rmI_{N},\label{eqn:EA_binary}
             %= \sum_{\ell \in \sL} \frac{(\ell - 1)b_{\ell}}{n-1} \log(N),
        \end{align}
Let $\rvu_2$ and $\rvu_{2}^{\star}$ denote the second eigenpair of $\rmA$ and $\rmA^{\star}$, respectively. For the proof of \Cref{thm:achievability_matrices}, we want to show that for each $v \in \gV$, $\sqrt{N} \ervy_v \ervu_{2,v} >\epsilon$ for some absolute constant $\epsilon >0$. Similar ideas appeared in \cite{Eldridge2018UnperturbedSA, Abbe2020EntrywiseEA, Gaudio2023CommunityDI}. 
Note that $\rvu_{2}^{\star} = N^{-1/2}\rvy$ and $\sign(\ervu^{\star}_{2, v}) = \sign(\ervy_v)$ for each $v\in \gV$.
However, $\rvu_{2}^{\star}$ is not the ideal candidate for direct comparison, due to the possibility that $\sqrt{N}\|\rvu_2 - \rvu^{\star}_{2}\|_{\infty} > 1$ as shown in {\cite[Theorem 3.3]{Abbe2020EntrywiseEA}}. Instead, Proposition \ref{prop:entrywise_diff_A} shows that the ideal pivot vector is $\rmA\rvu_{2}^{\star}/\lambda_{2}^{\star}$, which is a key step in the proof of \Cref{thm:achievability_matrices}.

\begin{proposition}\label{prop:entrywise_diff_A}
Let $(\lambda_2, \rvu_2)$ (resp. $(\lambda_{2}^{\star}, \rvu_{2}^{\star})$) be the second eigenpair of $\rmA$ (resp $\rmA^{\star}$). For some absolute constant $\const >0$, the following holds with probability at least $1 - O(N^{-2})$
\begin{align}
    \sqrt{N}\cdot \min_{s \in \{\pm 1\}} \|s\rvu_2 - \rmA \rvu_{2}^{\star}/\lambda_{2}^{\star}\|_{\infty} \leq \const/\log(\rho_{N}). \label{eqn:entrywise_diff_A}
\end{align}
Here, $\rho_{N}$ is the \emph{maximum} expected degree of the model \ref{def:non_uniform_HSBM_binary}, formally, 
\begin{align}\label{eqn:rho_binary}
    \rho_{N} \coloneqq \max_{v \in \gV} \E[\rD_{v}]. %= \max_{v \in \gV} \E\big[\sum_{j \neq v} \ermA_{vj}\big]. %  = (\alpha + \beta)N/2 - \beta \gtrsim q_{N}.
\end{align}
where $\rD_{v}\coloneqq \sum_{j\in \gV, j \neq v} \ermA_{vj}$ denotes the degree of vertex $v\in \gV$.
\end{proposition}

Lemma \ref{lem:LDP_AM} presents a \emph{Large Deviation Principle} (LDP) analysis of the $\sqrt{N} \ervy_{v} (\rmA\rvu_{2}^{\star})_v$, where $\D_{\rm{AM}}$ in \eqref{eqn:DAM_binary} serves as the rate function. The LDP analysis is crucial for proving the effectiveness of the spectral method when only the adjacency matrix $\rmA$ is available.
\begin{lemma}\label{lem:LDP_AM}
Define the following function
    \begin{align}
        \D_{\rm{AM}}(t) \coloneqq &\, \sum\limits_{\ell \in \sL} \frac{1}{2^{\ell - 1}} \Big[ a_{\ell}(1 - e^{- (\ell - 1)t}) + b_{\ell} \sum_{r=1}^{\ell - 1} \binom{\ell - 1}{r} \big( 1 - e^{- (\ell - 1 - 2r)t} \big) \Big] \label{eqn:DAMt_binary}
    \end{align}
Suppose that $\epsilon$ satisfies 
\begin{align}
\epsilon < \sum_{\ell\in \sL} 2^{-\ell + 1} \big( (\ell - 1) a_{\ell} + \sum_{r=1}^{\ell -1 } (\ell - 1 - 2r) b_{\ell} \big)\,. \notag
\end{align}
For any $\delta >0$, there exists some $N_{0} >0$ such that for all $N \geq N_{0}$ and $\epsilon_{N} < \epsilon$, the following holds
\begin{align}
    \P\Big(\sqrt{N}\,\, \ervy_{v} (\rmA\rvu_{2}^{\star})_v \leq \epsilon_{N} \, q_{N} \Big) \leq \exp\Big( - q_{N}\big[ \sup_{t\geq 0}\big(- \epsilon t + \D_{\rm{AM}}(t) \big) - \delta \, \big] \Big).\label{eqn:prob_yvAuv}
\end{align}
\end{lemma}
\begin{proof}[Proof of Lemma \ref{lem:LDP_AM}]
For each $v\in \gV$, recall $\ermA_{vj}$ in \eqref{eqn:adjacency_matrix_entry_binary}, then
    \begin{align}
    \sqrt{N} \ervy_{v} (\rmA\rvu_{2}^{\star})_v = &\, \ervy_{v}\sum\limits_{j\neq v, \,\, \ervy_{j} = +1 } \ermA_{vj} - \ervy_{v}\sum\limits_{j\neq v,\,\, \ervy_{j} = -1} \ermA_{vj} = \sum\limits_{j\neq v, \,\, \ervy_{j} = \ervy_{v} } \ermA_{vj} - \sum\limits_{j\neq v,\,\, \ervy_{j} \neq \ervy_{v}} \ermA_{vj} \notag \\
        =&\, \sum\limits_{j\neq v, \,\, \ervy_{j} = \ervy_{v} } \sum_{\ell \in \sL}\,\,\,\,\sum_{e\in \gE_{\ell},\,\, e \supset \{v, j\} } \,\,\,\, \etA^{(\ell)}_{e} - \sum\limits_{j\neq v,\,\, \ervy_{j} \neq \ervy_{v}} \sum_{\ell \in \sL}\,\,\,\,\sum_{e\in \gE_{\ell},\,\, e \supset \{v, j\} } \,\,\,\, \etA^{(\ell)}_{e} \notag\\
        =&\, \sum\limits_{\ell \in \sL} \sum\limits_{r=0}^{\ell - 1}\sum\limits_{e\in \gE^{(r)}_{\ell}(v)} \Big( (\ell - 1 - r)\cdot \etA^{(\ell)}_{e} - r \cdot \etA^{(\ell)}_{e} \Big) \notag\\
        =&\, \sum\limits_{\ell \in \sL} \Bigg( \sum\limits_{e\in \gE^{(0)}_{\ell}(v)} (\ell - 1)\cdot \etA^{(\ell)}_{e} + \sum\limits_{r=1}^{\ell - 1} \sum\limits_{e\in \gE^{(r)}_{\ell}(v)} (\ell - 1 - 2r)\cdot \etA^{(\ell)}_{e} \Bigg)\,,\notag\\
        =&\, \sum\limits_{\ell \in \sL} \Bigg( (\ell - 1)\cdot \sum\limits_{e\in \gE^{(0)}_{\ell}(v)} \etA^{(\ell)}_{e} + \sum\limits_{r=1}^{\ell - 1} (\ell - 1 - 2r)\cdot \sum\limits_{e\in \gE^{(r)}_{\ell}(v)} \etA^{(\ell)}_{e} \Bigg)\,,\label{eqn:Au_decomposition}
    \end{align}
where the third to last equality follows from the edge classification in \eqref{eqn:edge_classification}, since for each edge $e\in \gE^{(r)}_{\ell}(v)$, it contains $r$ vertices having opposite signs as $v$ with each contributing $-1$, and $\ell - 1 - r$ vertices having the same label with each contributing $+1$.

We then apply Lemma \ref{lem:LDP_binomial} to control the tail probability of \eqref{eqn:Au_decomposition}. We choose $g_{N} = 1$, $\rho_{i}^{(\ell)} = 2^{-\ell + 1}$ and values of $\alpha_{i}^{(\ell)}$, $c_{i}^{(\ell)}$ as in \eqref{eqn:Au_decomposition}. The proof is then completed by substituting these parameters into Lemma \ref{lem:LDP_binomial}, where two conditions can be verified straightforwardly under the Assumption.
\end{proof}

\subsection{Proof of \Cref{thm:achievability_matrices}}
Define the indicator function $\rF_{N} = \indi{\eqref{eqn:entrywise_diff_A} \textnormal{ holds}}$, and denote $\epsilon_{N} = C/\log(\rho_{N})$ with $C$ in \eqref{eqn:entrywise_diff_A}. Let $\hat{s} = \argmin_{s\in \{\pm 1\}}\|s\rvu_2 - \rmA \rvu_{2}^{\star}/\lambda_{2}^{\star}\|_{\infty}$, then
\begin{align}
    &\, \P( \eta_{N}(\rvy, \sign(\hat{s} \,\rvu_{2})) = 0) = \P( \sign(\hat{s} \rvu_{2}) = \rvy) \notag\\
    \geq &\, \P(\sqrt{N} \min_{v \in \gV} (\hat{s}\,\ervu_{2, v} \cdot \ervy_{v} ) \geq \epsilon_{N} \mid \rF_{N} = 1) \cdot \P(\rF_{N} = 1) \notag\\
    \geq &\, \P(\sqrt{N} \min_{v \in \gV} (\hat{s}\,\ervu_{2, v} \cdot \ervy_{v} ) \geq \epsilon_{N}) - \P(\rF_{N} = 0) \notag\\
    \geq &\, \P(\sqrt{N} \min_{v\in \gV}\ervy_{v}(\rmA\rvu_{2}^{\star})_{v}/\lambda_2^{\star} - \epsilon_{N} \geq \epsilon_{N}) - \P(\rF_{N} = 0) \notag\\
    \geq &\, 1- \sum_{v\in \gV}\P \big( \sqrt{N}\ervy_{v}(\rmA\rvu_{2}^{\star})_{v} \leq 2\lambda_{2}^{\star} \epsilon_{N}\big) - \P(\rF_{N} = 0) \notag
\end{align}
We take $q_{N} = \log(N)$. According to Proposition \ref{prop:entrywise_diff_A}, $\P(\rF_{N} = 0) \lesssim O(N^{-2})$. Note that $\lambda_{2}^{\star} = \frac{N}{2}(\alpha - \beta) - \alpha \asymp q_{N}$ due to \eqref{eqn:EA_binary} and \eqref{eqn:alpha_beta_def}. Then by Lemma \ref{lem:LDP_AM}, for any $\delta >0$, there exists some $N_{0}> 0$ such that the following holds for all $N \geq N_{0}$ and $2\lambda_{2}^{\star} \epsilon_{N}/q_{N} < \epsilon$ 
\begin{align}
    \P( \eta_{N}(\rvy, \sign(\hat{s} \,\rvu_{2})) = 0) \geq 1 - N^{1 - \sup_{t\geq 0} [ -\epsilon t + \D_{\rm{AM}}(t) ] + \delta} - N^{-2}\,,
\end{align}
where $\D_{\rm{AM}}(t)$ is defined in \eqref{eqn:DAMt_binary}. Note that $\D_{\rm{AM}} = \sup_{t\geq 0} \D_{\rm{AM}}(t) > 1$ due to \eqref{eqn:exact_recovery_condition_matrices}, the proof is then completed by choosing sufficiently small $\delta$.

\subsection{Proof of \Cref{thm:optimality_matrices}}
Let $\hat{s} = \argmin_{s\in \{\pm 1\}}\|s\rvu_2 - \rmA \rvu_{2}^{\star}/\lambda_{2}^{\star}\|_{\infty}$. By definition,
    \begin{align}
        \E \eta_{N} (\rvy, \sign(\hat{s} \,\rvu_{2})) \leq &\, \frac{1}{N}\sum\limits_{v\in \gV} \P(\ervy_{v} \cdot \hat{s} \ervu_{2, v} < 0) \notag
    \end{align}
    Similarly, define $\rF_{N} = \indi{\eqref{eqn:entrywise_diff_A} \textnormal{ holds}}$ and denote $\epsilon_{N} = C/\log(\rho_{N})$. According to Proposition \ref{prop:entrywise_diff_A}, we have
    \begin{align}
        \P(\ervy_{v} \cdot \hat{s} \ervu_{2, v} < 0) = &\,  \P(\ervy_{v} \cdot \hat{s} \ervu_{2, v} < 0 \mid \rF_{N} = 1) \cdot \P(\rF_{N} = 1) +  \P(\ervy_{v} \cdot \hat{s} \ervu_{2, v} < 0 \mid \rF_{N} = 0)\cdot \P(\rF_{N} = 0) \notag\\
        \leq &\, \P\big( \sqrt{N}\ervy_{v}(\rmA\rvu_{2}^{\star})_{v} \leq 2\lambda_2^{\star}\epsilon_{N}\big)  +  O(N^{-2}) \notag
    \end{align}
Again by Lemma \ref{lem:LDP_AM}, for any $\delta > 0$, there exists some large $N_{0}> 0$ such that the following holds for all $N \geq N_{0}$ and $2\lambda_{2}^{\star} \epsilon_{N}/q_{N} < \epsilon$,
\begin{align}
   \P\big( \sqrt{N}\ervy_{v}(\rmA\rvu_{2}^{\star})_{v} \leq 2\lambda_2^{\star}\epsilon_{N}\big)  \leq N^{- \sup_{t\geq 0} [ -\epsilon t + \D_{\rm{AM}}(t) ] + \delta}\,,\notag
\end{align}
where $\D_{\rm{AM}}(t)$ is defined in \eqref{eqn:DAMt_binary}. Since all the nodes are exchangeable, we have
\begin{align}
    \E \eta_{N} (\rvy, \sign(\hat{s} \,\rvu_{2})) \leq N^{-\sup_{t\geq 0} [ -\epsilon t + \D_{\rm{AM}}(t) ] + \delta} + O(N^{-2})\,.\notag
\end{align}
By letting $\epsilon, \delta$ go to zero, the desired result follows since $\D_{\rm{AM}} = \sup_{t\geq 0} \D_{\rm{AM}}(t)$.

\section{Performance of the refinement algorithms}\label{sec:refinement_binary}
This section outlines the proofs for Theorems \ref{thm:achieveability_binary} and \ref{thm:optimal_recovery_binary}, which concern the performance of the refinement algorithms. The detailed proofs of the supporting lemmas are deferred to \Cref{app:refinement_binary}.

\subsection{Refinement via power iteration}
Recall that the construction of $\widehat{\mA}$ in \eqref{eqn:hatA}, which serves as the input for Algorithm \ref{alg:refinement_spectral}, relies on the sets $\widehat{\gE}^{(r)}_{\ell}(v)$ defined in \eqref{eqn:hat_edge_classification}. These sets approximate the true edge classifications $\gE^{(r)}_{\ell}(v)$ in \eqref{eqn:edge_classification} for each $v\in \gV$ and $0\leq r \leq \ell - 1$, using the initial estimate $\widehat{\vy}^{(0)}$. To facilitate our analysis, we introduce an \emph{ideal} version of $\widehat{\mA}$ below, constructed using the true membership vector $\vy$ and the actual edge sets $\{\gE^{(r)}_{\ell}(v)\}_{v}$. For each edge $e\in \gE^{(r)}_{\ell}(v)$, we consider the vertices in $e \setminus \{v\}$. For $j\in e\setminus \{v\}$, we define the entry $\overline{\etA}^{(\ell)}_{e, v, j} \coloneqq \etA^{(\ell)}_{e}/(\ell - 1)$ when $r \in \{0, \ell - 1\}$; when $1\leq r \leq \ell - 2$, we let
\begin{align}
    \overline{\etA}^{(\ell)}_{e, v, j} =&\, \etA^{(\ell)}_{e}/r,\,\, \textnormal{if }\ervy_v = \ervy_j; \quad \overline{\etA}^{(\ell)}_{e, v, j} = \etA^{(\ell)}_{e}/(\ell - 1-r), \,\,\textnormal{if }\ervy_v \neq \ervy_j. \label{eqn:barAevj}
\end{align} 
For each $\ell \in \sL$, we define the \emph{ideal} version of $\widehat{\psi}_{\ell}$ in \eqref{eqn:psi_hat} as
\begin{align}
    \psi_{\ell} = \log \Big( \frac{\alpha_{\ell}(1 - \beta_{\ell}) }{\beta_{\ell}(1 - \alpha_{\ell})} \Big). \label{eqn:psi_l}
\end{align}
Then we define the \emph{ideal} weighted adjacency matrix $\overline{\rmA}$ by
\begin{align}
    \overline{\ermA}_{vj} \coloneqq \indi{v \neq j} \cdot \sum_{\ell \in \sL} \psi_{\ell} \sum_{r=0}^{\ell - 1} \sum_{e \in \gE^{(r)}_{\ell}(v) } \overline{\etA}^{(\ell)}_{e, v, j}, \quad v, j\in \gV.\label{eqn:barA}
\end{align}

We present two lemmas that are essential in proving \Cref{thm:achieveability_binary} (1). The first lemma controls the approximation error between $\widehat{\rmA}\widehat{\rvy}^{(0)}$ and $\overline{\rmA}\rvy$ in $\ell_{q_{N}}$-norm, while the second lemma is a large deviation result connecting the $\ell_{p}$-norm error to the mismatch ratio.
\begin{lemma}\label{lem:subcritical-Ay-error}
    With probability at least $1 - O(e^{- \frac{\D_{\mathrm{AM}}}{2}q_{N}})$, the following holds,
\begin{align}
  N^{-1/2} \min_{s \in \{\pm 1\}} \|s\widehat{\rmA}\widehat{\rvy}^{(0)} - \overline{\rmA}\rvy\|_{q_{N}}\lesssim N^{1/q_{N}-1/2} \sqrt{q_{N}}.\notag
\end{align}
\end{lemma}
    
\begin{lemma}[Lemma E.1 in \cite{Abbe2022LPT}]\label{lem:LDP-v-error}
Let $\vv$, $\vw$ and $\overline{\vv}$ be three random vectors in $\R^{N}$. Let $\delta_{N} \coloneqq \min_{i\in [N]} |\overline{\evv}_{i}| > 0$ denote the minimum entrywise magnitude of $\overline{\vv}$, and $\eta_{N} \coloneqq \eta_{N}(\sign(\vv), \sign(\overline{\vv}))$ in \eqref{eqn:misratio_binary} denote the mismatch ratio between two vectors $\sign(\vv), \sign(\overline{\vv})\in \{ \pm 1\}^{N}$. For any $\epsilon \in (0, 1)$, define the indicator variable
\begin{align}
    \rT_{j}(\epsilon) \coloneqq \mathds{1}\{ -\evw_{j}\cdot \sign(\overline{\evv}_{j}) \geq (1 - \epsilon)\cdot |\overline{\evv}_{j}|\}\,, \notag
\end{align}
for each $j\in [N]$. Let $p \coloneqq p_{N} \to \infty$. If $\min_{s=\pm 1} \|s\vv - \vw - \overline{\vv}\|_{p} \ll N^{1/p} \delta_{N}$ with probability at least $1 - C e^{-p}$ for some $C >0$, then
\begin{align}
    \limsup_{N\to \infty} p^{-1}\log \E \eta_{N}(\sign(\vv), \sign(\overline{\vv})) \leq \limsup_{\epsilon \to 0}\, \limsup_{N\to \infty} p^{-1} \log \bigg(\frac{1}{N}\sum_{j=1}^{N} \P \big( \rT_{j}(\epsilon) = 1 \big)\bigg).\notag
\end{align}
\end{lemma}

\begin{proof}[Proof Theorem \ref{thm:optimal_recovery_binary} (1)]
Recall $\widehat{\vy} = \sign(\widehat{\mA}\widehat{\vy}^{(0)})$ from \Cref{alg:refinement_spectral}. By taking $\vv = N^{-1/2} \widehat{\mA}\widehat{\vy}^{(0)}$, $\vw = N^{-1/2}\overline{\mA}\vy - q_{N}N^{-1/2}\vy$, $\overline{\vv} = N^{-1/2}\vy$ and $p = q_{N}$, Lemmas \ref{lem:subcritical-Ay-error} and \ref{lem:LDP-v-error} yield that
\begin{align}
    &\, \limsup_{N\to \infty} q_{N}^{-1}\log \E \eta_{N}(\widehat{\rvy}, \rvy) \notag\\
    \leq &\, \limsup_{\epsilon \to 0}\, \limsup_{N\to \infty} q_{N}^{-1} \log \bigg(\frac{1}{N}\sum_{v=1}^{N} \P \big( -([\overline{\mA}\vy]_{v} - q_{N} N^{-1/2}\evy_{v})\evy_{v} \geq (1 - \epsilon) q_{N} N^{-1/2} \big)\bigg)\,,\notag\\
    \leq &\, \limsup_{\epsilon \to 0}\, \limsup_{N\to \infty} q_{N}^{-1} \log \Big(\P \big( \evy_{v}[\overline{\mA}\vy]_{v} \leq \epsilon q_{N} N^{-1/2} \big)\Big).\notag
\end{align}
We now evaluate the tail probability for $\evy_{v}[\overline{\mA}\vy]_{v}$. Note that the $\ell$-hyperedge $e$ may contribute different weights when summing over $v$-th row of $\overline{\rmA}$, since its contribution depends on the position where the hyperedge $e$ shows up in $\ervy_v [\overline{\rmA}\rvy]_v$. Then we have
\begin{align}
    &\, \ervy_v [\overline{\rmA}\rvy]_v = \ervy_v\sum_{j\neq v, \,\, \ervy_j = +1 } \overline{\ermA}_{vj} - \ervy_v\sum_{j\neq v,\,\, \ervy_j = -1} \overline{\ermA}_{vj} =\sum_{j\neq v, \,\, \ervy_j = \ervy_v } \overline{\ermA}_{vj} - \sum_{j\neq v,\,\, \ervy_j \neq \ervy_v} \overline{\ermA}_{vj} \notag\\
        =&\, \sum_{\ell \in \sL} \psi_{\ell} \cdot \Big(\sum_{e\in \gE^{(0)}_{\ell}(v)} (\ell - 1)\cdot \frac{\etA^{(\ell)}_e}{\ell - 1} - \sum_{e\in \gE^{(\ell - 1)}_{\ell}(v)} (\ell - 1)\cdot \frac{\etA^{(\ell)}_e}{\ell - 1} \Big)\notag\\
        &\,\quad + \sum_{\ell \in \sL} \psi_{\ell} \cdot \sum_{r=1}^{\ell - 2}\sum_{e\in \gE^{(r)}_{\ell}} \Big( r\cdot \frac{\etA^{(\ell)}_{e}}{r} - (\ell - 1-r)\cdot\frac{\etA^{(\ell)}_{e}}{\ell - 1-r} \Big)\label{eqn:RVdiff}\\
        = &\, \sum_{\ell \in \sL} \psi_{\ell}\cdot \Big( \sum_{ e\in \gE^{(0)}_{\ell}(v) } \etA^{(\ell)}_{e} - \sum_{ e\in \gE^{(\ell - 1)}_{\ell}(v) } \etA^{(\ell)}_{e} \Big) = \rW_{v} \textnormal{ in }\eqref{eqn:Wv},\notag
\end{align}
where the redundant edges in \eqref{eqn:Au_decomposition} are cancelled out in \eqref{eqn:RVdiff} due to the construction of $\overline{\mA}$, while the remaining term coincides with $\rW_{v}$ in \eqref{eqn:Wv}. By applying Lemma \ref{lem:LDP_binomial}, an analysis similar to Lemma \ref{lem:binom_difference_prob} yields
\begin{align}
    \limsup_{N\to \infty} q_{N}^{-1}\log \Big(\P \big( \evy_{v}[\overline{\mA}\vy]_{v} \leq \epsilon q_{N} N^{-1/2} \big)\Big) \leq - \D_{\rm{GH}}\,.\notag
\end{align}
This completes the proof of Theorem \ref{thm:optimal_recovery_binary} (1). Then, it immediately follows that
\begin{align}
    \limsup_{N\to \infty} q_{N}^{-1}\log \E \eta_{N}(\widehat{\rvy}, \rvy) \leq - \D_{\rm{GH}}.\notag
\end{align}
This completes the proof.
\end{proof}
\begin{proof}[Proof Theorem \ref{thm:achieveability_binary} (1)]
By Markov's inequality, when \eqref{eqn:achievabilityGH} holds, we have
\begin{align}
    \P(\eta_{N}(\widehat{\rvy}, \rvy) \neq 0) = \P\big( \eta_{N}(\widehat{\rvy}, \rvy) \leq N^{-1}\big) \leq N \cdot \E \eta_{N}(\widehat{\rvy}, \rvy) = \exp\big(\log(N) - \D_{\rm{GH}}\cdot q_{N} \big) \leq N^{-\epsilon}.\notag
\end{align}
Therefore, \Cref{alg:refinement_spectral} achieves strong consistency with probability at least $1 - N^{-\epsilon}$.
\end{proof}

\subsection{Refinement via majority voting} 
The key idea is to leverage the results in \cite{Dumitriu2023OptimalAE}, which analyzes a similar majority voting refinement algorithm for non-uniform hypergraph SBM with $K\geq 2$ communities. Note that the model in \cite{Dumitriu2023OptimalAE} allows unbalanced community sizes, while our model \ref{def:non_uniform_HSBM_binary} assumes two equal-sized communities. The main difference lies in the definition of the divergence function.
\begin{proof}[Proof of Theorem \ref{thm:achieveability_binary} (2)]
According to Section 4 in \cite{Dumitriu2023OptimalAE}, the entire vertex set $\gV$ can be split into two disjoint subsets $\setS$ and $\gV\setminus \setS$. The vertices in $\setS$ enjoy three good properties, such that they can be corrected to their own communities in each step. After $\log(N)$ many iterations, there will be no wrongly classified vertex in $\setS$. As for the other subset $\gV\setminus \setS$, it is proved that $|\gV\setminus \setS| < 1$ with probability tending to $1$ when \eqref{eqn:achievabilityGH} holds, thus completing the proof. 
\end{proof}
\begin{proof}[Proof of Theorem \ref{thm:optimal_recovery_binary} (2)]
Suppose \eqref{eqn:optimality_condition}. The vertices in $\setS$ can be corrected by Algorithm \ref{alg:refinement_voting_binary} as well. However, $|\gV\setminus \setS|\leq N^{1 - \kappa_{N}}$ under \eqref{eqn:optimality_condition}, which is not as good as before. The proof is then completed by considering the worst scenario where all nodes in $\gV \setminus \setS$ were wrongly classified.
\end{proof}

%%%%%%%%%%%%%%%%%%%%%%%%%%%%%%%%%%%%%%%%%%%%%%%%%%%%%%%%%%%%
%%                  Concluding remarks                    %%
%%%%%%%%%%%%%%%%%%%%%%%%%%%%%%%%%%%%%%%%%%%%%%%%%%%%%%%%%%%%
\section{Concluding remarks}\label{sec:conclusion_binary}
We conclude this paper with an insightful observation. In deriving the optimal estimator \eqref{eqn:genie-aided} and its transformation \eqref{eqn:Wv}, only two specific types of edges, $\gE^{(0)}_{\ell}(v)$ and $\gE^{(\ell - 1)}_{\ell}(v)$ defined in \eqref{eqn:edge_classification}, contribute to the calculation of the rate function $\D_{\mathrm{GH}}$ for the LDP analysis, even though there are $\ell - 2$ other types of edges that cross two communities. When $\D_{\mathrm{GH}} > 1 > \D_{\rm{AM}}$, Algorithm \ref{alg:spectral_partition_binary} is hampered by the redundant edges in \eqref{eqn:Au_decomposition}, which prevent the one-stage spectral method from achieving strong consistency, as well as the optimal mismatch ratio. However, these redundant edges are cancelled out in \eqref{eqn:RVdiff} by constructing the \emph{ideal} weighted adjacency matrix $\overline{\rmA}$ in \eqref{eqn:barA}, which inspired the design of the \Cref{alg:refinement_spectral}. This cancellation effect is crucial for the refinement algorithms to achieve strong consistency and the optimal mismatch ratio, suggesting that in hypergraph community detection, certain types of edges may carry more informative signals for clustering, while others may introduce noise that hinders performance.
%%%%%%%%%%%%%%%%%%%%%%%%%%%%%%%%%%%%%%%%%%%%%%
%%           Acknowledgments                %%
%%%%%%%%%%%%%%%%%%%%%%%%%%%%%%%%%%%%%%%%%%%%%%
\section*{Acknowledgments}
The author would like to thank Ioana Dumitriu for her stimulating discussions and insightful comments on the early draft of this manuscript. The author would like to thank Ery Arias-Castro and Yizhe Zhu for helpful discussions. Part of this work was done during the \emph{Southeastern Probability Conference} May 15-16 2023 at Duke University in Durham, NC, and the summer school on \emph{Random Matrix Theory and Its Applications} May 22-26 2023 at the Ohio State University in Columbus, OH. The author thanks the organizers for their hospitality. The author acknowledges support from NSF DMS-2154099.

%%%%%%%%%%%%%%%%%%%%%%%%%%%%%%%%%%%%%%%%%%%%%%
%%              Bibliography                %%
%%%%%%%%%%%%%%%%%%%%%%%%%%%%%%%%%%%%%%%%%%%%%%
%\bibliographystyle{abbrvnat}
%\bibliography{bibliography.bib}
\printbibliography
%%%%%%%%%%%%%%%%%%%%%%%%%%%%%%%%%%%%%%%%%%%%%%
%%               Appendix                   %%
%%%%%%%%%%%%%%%%%%%%%%%%%%%%%%%%%%%%%%%%%%%%%%
\newpage
\appendix
\addcontentsline{toc}{section}{Appendices}
%%%%%%%%%%%%%%%%%%%%%%%%%%%%%%%%%%%%%%%%%%%%%%%%%%%%%%%%%%%%
%%                      Impossibility                     %%
%%%%%%%%%%%%%%%%%%%%%%%%%%%%%%%%%%%%%%%%%%%%%%%%%%%%%%%%%%%%
\section{Deferred proofs in \Cref{sec:information_theoretic_limits_binary}}\label{app:impossibility_binary}
\begin{proof}[Proof of Lemma ~\ref{lem:MLE_maximize}]
Recall $[\gV]^{\ell} \coloneqq \{\setS\subseteq \gV:|\setS| = \ell \}$. Consider the following log-likelihood
\begin{align}
    &\, \log \, \P (\mathbb{H} = \gH \mid \rvy = \rvz)\notag\\
  = &\,\log \bigg( \prod_{\ell \in \sL} \, \prod_{e\in [\gV]^{\ell}} \big[\P(e\in \gE_{\ell}| \rvy = \rvz) \big]^{\etA^{(\ell)}_{e}} \cdot \big[\P(e\notin \gE_{\ell}| \rvy = \rvz) \big]^{1 - \etA^{(\ell)}_{e}} \bigg)\notag\\
  = &\, \sum\limits_{\ell \in \sL}\bigg( \sum\limits_{e\in [\gV]^{\ell} } \indi{e \textnormal{ is in-cluster}}\cdot \big[\etA^{(\ell)}_{e} \, \log \alpha_{\ell} + (1 - \etA^{(\ell)}_{e})\, \log (1 - \alpha_{\ell}) \big] \notag \\
  &\, \quad \quad \quad \quad +  \sum\limits_{e\in [\gV]^{\ell}} (1 - \indi{e \textnormal{ is in-cluster}})\cdot  \big[\etA^{(\ell)}_{e} \, \log \beta_{\ell} + (1 - \etA^{(\ell)}_{e})\, \log (1 - \beta_{\ell}) \big] \bigg) \notag \\
  = &\, \sum\limits_{\ell \in \sL} \bigg(\log \bigg(\frac{\alpha_{\ell}(1 - \beta_{\ell})}{\beta_{\ell}(1 - \alpha_{\ell})}\bigg) \cdot \sum\limits_{e\in [\gV]^{\ell} } \indi{e \textnormal{ is in-cluster}}\cdot \etA^{(\ell)}_{e} +  \const_{\ell} \bigg),\notag
\end{align}
where $\const_{\ell} >0$ is some constant irrelevant to the membership assignment $\rvy$. Note that the logarithm map preserves the monotocity and $\const_{\ell}$ can be factored out, then MLE in \eqref{eqn:MLE} can be further simplified as
\begin{align}
    \widehat{\rvy}_{\mathrm{MLE}} = &\,\underset{\rvz \in \{\pm 1\}^{|\gV|}, \, \ones^{\sT}\rvz = 0}{\arg\max} \, \sum\limits_{\ell \in \sL} \bigg(\log \bigg(\frac{\alpha_{\ell}(1 - \beta_{\ell})}{\beta_{\ell}(1 - \alpha_{\ell})}\bigg) \cdot \sum\limits_{e\in [\gV]^{\ell} } \indi{e \textnormal{ is in-cluster}}\cdot \etA^{(\ell)}_{e} \bigg) \notag\\
    = &\, \underset{\rvz \in \{\pm 1\}^{|\gV|}, \, \ones^{\sT}\rvz = 0}{\arg\max}\,\, f(\rvz|\gH).\notag
\end{align}
Therefore, the desired argument is proved since each $\ell \in \sL$, the following holds
\begin{align}
    \log \Big( \frac{\alpha_{\ell}(1 - \beta_{\ell})}{\beta_{\ell}(1 - \alpha_{\ell})}\Big) \, \left\{
        \begin{aligned}
            &> 0\,, & \textnormal{if }\alpha_{\ell} > \beta_{\ell}, \\
            &< 0\,, & \textnormal{if } \alpha_{\ell} < \beta_{\ell}.
        \end{aligned}
    \right.\notag
\end{align}
\end{proof}

\begin{proof}[Proof of Lemma ~\ref{lem:independent_WaWbJ}]
First, for any $a\in \gV_{+} \cap \sU$, $b\in \gV_{-} \cap \sU$, the independence between $\rW_{a}[\gU]$ and $\rJ[\gU]$ follows directly. The resaon is that each hyperedge in $\rJ[\gU]$ contains at least two vertices in $\gU$, while hyperedge in $\rW_{a}$ contains exactly one vertex $a\in \gU$, thus no common edges. The independence between $\rW_{b}[\gU]$ and $\rJ[\gU]$ follows similarly. The independence between $\rW_{a}[\gU]$ and $\rW_{b}[\gU]$ is also clear since they do not share any common hyperedge by construction.
At the same time, we have
\begin{align}
    &\, \{\rW_{a}[\gU] \leq -\zeta_{N} q_{N}\} \cap \{\rJ[\gU] = 1\} \implies \{\rW_{a}[\gU] \leq -\zeta_{N} q_{N}\} \cap \{\rJ_{a} \leq \zeta_{N} q_{N} \},\notag\\
    &\, \{\rW_{b}[\gU] \leq -\zeta_{N} q_{N}\} \cap \{\rJ[\gU] = 1\} \implies \{\rW_{b}[\gU] \leq -\zeta_{N} q_{N}\} \cap \{\rJ_{b} \leq \zeta_{N} q_{N} \}.\notag
\end{align}
Thus it suffices to prove the following relationship
\begin{align}
    \{\rW_{a}[\gU] \leq -\zeta_{N} q_{N}\} \cap \{\rJ_{a} \leq \zeta_{N} q_{N} \} \implies \{\rW_{a} \leq 0\}.\notag
\end{align}
To this end, we expand $\rW_{a}$ and plug in the condition on the left hand side of the above equation, then 
\begin{align}
    \rW_{a} = &\, \sum_{\ell \in \sL} \log \Big( \frac{\alpha_{\ell}(1 - \beta_{\ell})}{\beta_{\ell}(1 - \alpha_{\ell})} \Big)\cdot \Big( \sum_{ e\in \gE^{(0)}_{\ell}(a) } \etA^{(\ell)}_{e} - \sum_{ e\in \gE^{(\ell - 1)}_{\ell}(a) } \etA^{(\ell)}_{e} \Big)\notag\\
            \leq &\, \rW_{a}[\gU] + \rJ_{a}\leq 0,\notag
\end{align}
which completes the proof.
\end{proof}

\begin{proof}[Proof of Lemma ~\ref{lem:Wab_J_prob}]
For $a\in \gV_{+}\cap \sU$, according to Lemma \ref{lem:binom_difference_prob}, $\P( \rW_{a}[\sU] \leq - \zeta_{N} \cdot q_{N}) = e^{-\D_{\rm{GH}}\cdot q_{N}}$. According to Lemma \ref{lem:independent_WaWbJ}, $\rW_{a}[\sU]$ and $\rW_{a^{\prime}}[\sU]$ are independent for distinct $a,a^{\prime}\in \gV_{+}\cap \sU$, thus
    \begin{align}
        &\,\prod_{a\in \gV_{+} \cap \sU} \P(\rW_{a}[\sU] > - \zeta_{N} \cdot q_{N}) = \prod_{a\in \gV_{+} \cap \sU} \Big(1 - \P(\rW_{a}[\sU] \leq - \zeta_{N} \cdot q_{N})\Big) \notag \\
        = &\, (1 - e^{-\D_{\rm{GH}}\cdot q_{N}})^{\gamma_{N} |\gV|/2} \leq \exp \Big(-\frac{\gamma_{N} \cdot |\gV|}{2} \cdot e^{-\D_{\rm{GH}}\cdot q_{N}} \Big)\,\,,\notag
    \end{align}
where we used the fact that $1 -x \leq e^{-x}$ for $x=o(1)$ in the last inequality. The upper bound for $\prod_{b\in \gV_{-} \cap \sU} \P(\rW_{b}[\sU] > - \zeta_{N} \cdot q_{N})$ can be obtained similarly. 
    
For the second part of the proof, through a union bound, the problem is transferred to seeking an upper bound on $\P(\rJ_{v}[\sU] > \zeta_{N} q_{N})$ for any $v\in \gU$, since
\begin{align*}
    \P(\rJ[\gU] = 0) = \P(\exists v\in \gU \textnormal{ s.t. } \indi{\rJ_{v}[\sU] \leq \zeta_{N} q_{N} } = 0 ) \leq \sum\limits_{v\in \sU} \P(\rJ_{v}[\sU] > \zeta_{N} q_{N})\,.
\end{align*}
Recall $\gW_{\ell}^{(\geq 2)}$ in \eqref{eqn:edgeWm}, where each edge $e\in \gW_{\ell}^{(\geq 2)}$ contains at least $2$ vertices in $\sU$. Thus for edge $e\in \gW_{\ell}^{(\geq 2)}$ with $e\ni v$, at least one more vertex in $e\setminus\{v\}$ is from $\sU$. We now calculate the expectation of $\rJ_{v}[\sU]$ defined in \eqref{eqn:rJv}. Note that the cardinality of the set $\gW_{\ell}^{(\geq 2)}$ is upper bounded by $\binom{N-1}{\ell - 1} - \binom{(1 - \gamma_{N})N}{\ell - 1}$, then
\begin{align*}
    \E \rJ_{v}[\sU] \coloneqq \sum\limits_{\ell \in \sL} \log \Big( \frac{\alpha_{\ell}(1 - \beta_{\ell})}{\beta_{\ell}(1 - \alpha_{\ell})} \Big)\cdot \bigg[ \binom{N-1}{\ell - 1} - \binom{(1 - \gamma_{N})N}{\ell - 1} \bigg] \cdot \E \etA^{(\ell)}_{e}.
\end{align*}
where $\E \etA^{(\ell)}_{e} = \alpha_{\ell}$ if $e$ is in-cluster, otherwise $\E \etA^{(\ell)}_{e} =\beta_{\ell}$. Since $\ell \in \sL$ is some fixed integer and $\ell \ll N$, we then have
\begin{align*}
    \binom{N-1}{\ell - 1}[1 - (1 - \gamma_{N})^{\ell - 1}] \leq  \binom{N-1}{\ell - 1} - \binom{(1 - \gamma_{N})N}{\ell - 1} \leq \binom{N-1}{\ell - 1}[1 - (1 - \gamma_{N})\cdot(1 - 2\gamma_{N})^{\ell - 2}].
\end{align*}
The expectation of $\rJ_{v}[\sU]$ is then bounded both from above and below, as follows
\begin{align*}
    \E \rJ_{v}[\sU] \geq &\, \sum\limits_{\ell \in \sL} \log \Big( \frac{\alpha_{\ell}(1 - \beta_{\ell})}{\beta_{\ell}(1 - \alpha_{\ell})} \Big)\cdot [1 - (1 - \gamma_{N})^{\ell - 1}] \cdot b_{\ell} \\
    \E \rJ_{v}[\sU] \leq &\, \sum\limits_{\ell \in \sL} \log \Big( \frac{\alpha_{\ell}(1 - \beta_{\ell})}{\beta_{\ell}(1 - \alpha_{\ell})} \Big)\cdot [1 - (1 - 2\gamma_{N})^{\ell - 1}] \cdot a_{\ell}\,.
\end{align*}
Note that $a_{\ell} \gtrsim b_{\ell} \gtrsim 1$ under Assumption \ref{ass:asymptotics_binary}, and $\gamma_{N} = o(1)$ by construction of $\sU$. Since $1 - x + O(x^{2})= e^{-x}$ for $x = o(1)$, we have $1 - (1 - \gamma_{N})^{\ell - 1} = (\ell - 1)\gamma_{N}$ and $1 - (1 - 2\gamma_{N})^{\ell - 1} = (\ell - 1)\gamma_{N}$. Consequently,  $\E \rJ_{v}[\sU] = \Theta(\gamma_{N} q_{N})$ for any $v \in \sU$, and $\rJ_{v}[\sU]$ concentrates around its expectation with high probability. According to Lemma \ref{lem:Chernoffvariant}, the following holds
\begin{align*}
    \P(\rJ_{v}[\sU] \geq \zeta_{N} q_{N}) \leq \exp\Big( -q_{N} \cdot \big[\zeta_{N} \log(\zeta_{N}/\gamma_{N}) - (\zeta_{N} - \gamma_{N}) \big]\Big).
\end{align*}
Recall that $|\sU| = \gamma_{N} |\gV|$, then by a union bound
    \begin{align*}
        \P(\rJ[\sU] = 0) \leq \sum\limits_{v\in \sU} \P(\rJ_{v}[\sU] \geq \zeta_{N} q_{N}) \leq \exp\Big( \log(\gamma_{N} N) - q_{N} \cdot \big[\zeta_{N} \log(\zeta_{N}/\gamma_{N}) - (\zeta_{N} - \gamma_{N}) \big]\Big).
    \end{align*}
which completes the proof.
\end{proof}

\begin{proof}[Proof of Lemma ~\ref{lem:genie_approximation}]
The likelihood for observing $\{\tA^{(\ell)}\}_{\ell \in \sL}$ conditioned on $\ervy_{v} = s \in \{\pm 1\}$ and $\rvy_{-v}$ is
    \begin{align*}
        &\,\P (\{\tA^{(\ell)}\}_{\ell \in \sL} \mid \ervy_{v} = s, \rvy_{-v}) \propto \prod_{\ell \in \sL} \, \prod_{e\ni v} \big[\P(e\in \gE_{\ell}| \ervy_{v} = s, \rvy_{-v}) \big]^{\etA^{(\ell)}_{e}} \cdot \big[\P(e\notin \gE_{\ell}| \ervy_{v} = s, \rvy_{-v}) \big]^{1 - \etA^{(\ell)}_{e}}.
    \end{align*}
where $\propto$ only hides factors that are irrelevant to $v$. Note that the label vector is sampled uniformly at random from all vectors $\rvz\in \{\pm 1\}^{|\gV|}$ satisfying $\ones^{\sT} \rvz = 0$. According to the Bayes Theorem and the analysis in \eqref{eqn:diff_MLE_function}, for $s\in \{\pm 1\}$, the log-odd ratio can be expressed as
    \begin{align*}
        &\,\log \bigg( \frac{\P \big( \ervy_v = s|\{\tA^{(\ell)}\}_{\ell \in \sL}, \rvy_{-v} \big)}{\P \big( \ervy_v = -s|\{\tA^{(\ell)}\}_{\ell \in \sL}, \rvy_{-v} \big)} \bigg) = \log \bigg( \frac{\P (\{\tA^{(\ell)}\}_{\ell \in \sL} \mid \ervy_{v} = 1, \rvy_{-v})}{\P (\{\tA^{(\ell)}\}_{\ell \in \sL} \mid \ervy_{v} = -1, \rvy_{-v})} \bigg) \\
        = &\,\sum_{\ell \in \sL} \log\Big( \frac{\alpha_{\ell}(1 - \beta_{\ell})}{\beta_{\ell}(1 - \alpha_{\ell})} \Big) \cdot\Big[  |\gE^{(0)}_{\ell}(v)| - |\gE^{(\ell - 1)}_{\ell}(v)|\Big]\\
        =&\, \rW_{v},
    \end{align*}
which completes the proof.
\end{proof}

% \begin{proof}[Proof of Lemma \ref{lem:fundamental_limit_genie-aided}]
% We first define the sign between two vectors $\rvy$ and $\widehat{\rvy}$ as
%     \begin{align}
%         s(\rvy, \widehat{\rvy}) \coloneqq \arg\min_{s \in \{\pm 1\}} \|\rvy - s\widehat{\rvy} \|_{1}.
%     \end{align}
% Consequently, $s(\rvy, \widehat{\rvy}) = \sign(\<\rvy, \widehat{\rvy}\>)$ if $\<\rvy, \widehat{\rvy}\> \neq 0$. At the same time, when $|\<\rvy, \widehat{\rvy}\>| > 1$, $s(\rvy, \widehat{\rvy}) = s(\rvy_{v}, \widehat{\rvy}_{v})$ for all $v\in [N]$. Thus for any $\widehat{\rvy} \in \sF$, the following holds
%     \begin{align}
%         \E \eta_{N}(\widehat{\rvy}, \rvy) = \E \Big[ \frac{1}{N}\D_{\textnormal{HD}}(s(\rvy, \widehat{\rvy})\cdot \rvy, \widehat{\rvy})\Big] = \E \Big[ \frac{1}{N}\sum_{v=1}^{N} \indi{\ervy_v \neq \widehat{\ervy}_v} \Big].
%     \end{align}
% \end{proof}

\begin{lemma}[Generalized version of {\cite[Lemma 2]{Kim2018StochasticBM}}]\label{lem:binom_difference_prob}
    According to the Assumptions in Lemma \ref{lem:Wab_J_prob}, the following holds for $\zeta_{N} = o(1)$ and $q_{N} \gg 1$
    \begin{align}
        \P \big( \rW_{a}[\sU] \leq - \zeta_{N} \cdot q_{N} \big) = e^{-\D_{\rm{GH}}\cdot q_{N}} \,\,, \quad \P \big( \rW_{b}[\sU] \leq - \zeta_{N} \cdot q_{N} \big) = e^{-\D_{\rm{GH}}\cdot q_{N}}\,\,,\notag
    \end{align}
    where the rate function is
    \begin{align}
        \D_{\rm{GH}} =  \sum\limits_{\ell \in \sL} \frac{1}{2^{\ell - 1}} (\sqrt{a_{\ell}} - \sqrt{b_{\ell}})^2. \notag
    \end{align}
\end{lemma}
\begin{proof}[Proof of Lemma ~\ref{lem:binom_difference_prob}]
    For vertex $a\in \gV_{+}\cap \sU$, we write $\rW_{a}[\sU] = \rY_a - \rZ_{a}$, where $\rY_{a}$ and $\rZ_{a}$ are defined as
    \begin{align}
        \rY_{a} \coloneqq &\, \sum\limits_{\ell \in \sL} \log \Big( \frac{\alpha_{\ell}(1 - \beta_{\ell})}{\beta_{\ell}(1 - \alpha_{\ell})} \Big)\cdot \sum_{ e\in \gE^{(0)}_{\ell}(a) } \etA^{(\ell)}_{e}\indi{\substack{e\cap \gU = \{a\}\\e\setminus\{a\} \subset [\gV_{+} \setminus \gU]}}\,\,,\notag \\
        \rZ_{a} \coloneqq &\, \sum\limits_{\ell \in \sL} \log \Big( \frac{\alpha_{\ell}(1 - \beta_{\ell})}{\beta_{\ell}(1 - \alpha_{\ell})} \Big)\cdot \sum_{ e\in \gE^{(\ell - 1)}_{\ell}(a) } \etA^{(\ell)}_{e}\indi{\substack{e\cap \gU = \{a\}\\e\setminus\{a\} \subset [\gV_{-} \setminus \gU]}} \,.\notag
    \end{align}
According to Assumption \ref{ass:asymptotics_binary}, hyperedges $e$ in $\rY_{a}$ and $\rZ_{a}$ are sampled independently with probabilities $p^{(\ell)}_1 = \alpha_{\ell} = a_{\ell} \cdot q_{N} /\binom{N-1}{\ell - 1}$ and $p^{(\ell)}_2 = b_{\ell} \cdot q_{N}/\binom{N-1}{\ell - 1}$. We consider apply Lemma \ref{lem:LDP_binomial}. For each $\ell \in \sL$, we take $n_{\ell} = 2$, $\alpha^{(\ell)}_1 = a_{\ell}$, $\alpha^{(\ell)}_2 = b_{\ell}$ with $a_{\ell} > b_{\ell}$, and
\begin{align}
    c_1^{(\ell)} = \log \Big( \frac{\alpha_{\ell}(1 - \beta_{\ell})}{\beta_{\ell}(1 - \alpha_{\ell})} \Big)= \log \Big( \frac{a_{\ell}}{b_{\ell}} \Big), \quad c_2^{(\ell)} = -\log \Big( \frac{\alpha_{\ell}(1 - \beta_{\ell})}{\beta_{\ell}(1 - \alpha_{\ell})} \Big)= \log \Big( \frac{b_{\ell}}{a_{\ell}} \Big).\notag
\end{align}
At the same time, $|\gV_{+}| = |\gV_{-}| = N/2$, $|\gU\cap \gV_{+}| = |\gU\cap \gV_{-}| = \gamma_{N} N/2$ and $\gamma_{N} = o(1)$, we have
\begin{align}
    N_1^{(\ell)} = N_2^{(\ell)} = \binom{(1 - \gamma_{N})N/2}{\ell - 1} = (1 + o(1))\cdot \frac{1}{2^{\ell - 1}} \binom{N-1}{\ell - 1}\,.\notag
\end{align}
Thus $g_{N} = 1$ and $\rho^{(\ell)}_1 = \rho^{(\ell)}_2 = 2^{-(\ell - 1)}$ for each $\ell \in \sL$. Consequently, 
\begin{align*}
    \sum\limits_{\ell \in \sL}\sum\limits_{i=1}^{n_{\ell}}c^{(\ell)}_{i} \alpha^{(\ell)}_{i} \rho^{(\ell)}_{i} = \sum\limits_{\ell \in \sL}2^{-(\ell - 1)}(a_{\ell} - b_{\ell})\log(a_{\ell}/b_{\ell}) >0.
\end{align*}
Then all the conditions of Lemma \ref{lem:LDP_binomial} are satisfied. By taking $\epsilon = -\zeta_{N} < 0$ and $g_{N} = 1$, Lemma ~\ref{lem:LDP_binomial} indicates that
\begin{align}
    \P (\rY_{a} - \rZ_{a} \leq -\zeta_{N} \cdot q_{N}) = \exp \big( -[1 + o(1)] \cdot \D_{\rm{GH}}\cdot g_{N}\cdot q_{N} \big) = e^{-\D_{\rm{GH}}\cdot q_{N}}\,,\notag
\end{align}
where the rate function is defined as
\begin{align}
    \D_{\rm{GH}} \coloneqq \sup_{t\geq 0} \Big( \sum\limits_{\ell \in \sL} \frac{1}{2^{\ell - 1}} [ a_{\ell} + b_{\ell} - (a_{\ell})^{t}(b_{\ell})^{1-t} -(a_{\ell})^{1-t}(b_{\ell})^{t} ]\Big)\,.\notag 
\end{align}
Note that by union bound
\begin{align*}
    \D_{\rm{GH}} \leq \sum\limits_{\ell \in \sL} \frac{1}{2^{\ell - 1}} \sup_{t\geq 0}  [ a_{\ell} + b_{\ell} - (a_{\ell})^{t}(b_{\ell})^{1-t} -(a_{\ell})^{1-t}(b_{\ell})^{t} ]\, = \sum\limits_{\ell \in \sL} \frac{1}{2^{\ell - 1}} (\sqrt{a_{\ell}} - \sqrt{b_{\ell}})^2 
\end{align*}
On the other way around, the lower bound can be achieved by simply taking $t = 1/2$.

For vertex $b\in \gV_{-}\cap \sU$, we write $\rW_{b}[\sU] = \rY_b - \rZ_{b}$, where $\rY_{b}$ and $\rZ_{b}$ are defined as
    \begin{align*}
        \rY_{b} \coloneqq &\, \sum\limits_{\ell \in \sL} \log \Big( \frac{\alpha_{\ell}(1 - \beta_{\ell})}{\beta_{\ell}(1 - \alpha_{\ell})} \Big)\cdot \sum_{ e\in \gE^{(0)}_{\ell}(b) } \etA^{(\ell)}_{e}\indi{\substack{e\cap \gU = \{b\}\\e\setminus\{b\} \subset [\gV_{-} \setminus \gU]}}\,\,,\\
        \rZ_{b} \coloneqq &\, \sum\limits_{\ell \in \sL} \log \Big( \frac{\alpha_{\ell}(1 - \beta_{\ell})}{\beta_{\ell}(1 - \alpha_{\ell})} \Big)\cdot \sum_{ e\in \gE^{(\ell - 1)}_{\ell}(b) } \etA^{(\ell)}_{e}\indi{\substack{e\cap \gU = \{b\}\\e\setminus\{b\} \subset [\gV_{+} \setminus \gU]}}\,.
    \end{align*}
The proof is completed by following the same steps as above.
\end{proof}

\begin{lemma}[Generalized version of {\cite[Theorem 10]{Kim2018StochasticBM}}]\label{lem:LDP_binomial}
 Let $g_{N}$ be some non-decreasing function with respect to $N$ satisfying $1 \lesssim g_{N}  \ll N\log^{-2}(N)$. Let $\sL = \{\ell| \ell \geq 2, \ell\in \N\}$ be a set of integers with finite cardinality. For each $\ell \in \sL$, let $n_{\ell}\geq 2$ denote some finite positive integer. For each $\ell \in \sL$ and $i\in [n_{\ell}]$, let $\rY^{(\ell)}_{i}$ denote the random variable sampled from the binomial distribution $\mathrm{Bin}(N_i^{(\ell)}, p_i^{(\ell)})$, where
\begin{align}
    N_i^{(\ell)} = [1 + o(1)] \cdot \rho^{(\ell)}_{i} \cdot g_{N} \binom{N-1}{\ell - 1} \quad \textnormal{and} \quad p^{(\ell)}_{i} = [1 + o(1)]\cdot \alpha^{(\ell)}_{i} \cdot \frac{q_{N}}{\binom{N-1}{\ell - 1}}\,, \,\,
\end{align}
where $\rho_{i}^{(\ell)}$ and $\alpha^{(\ell)}_{i}$ are some positive constants independent of $N$. Let $\rX_{N} = \sum\limits_{\ell \in \sL}\sum\limits_{i=1}^{n_{\ell}} c^{(\ell)}_{i} \rY^{(\ell)}_{i}$ be a summation of random variables such that 
\begin{enumerate}
    \item at least one of the $c^{(\ell)}_{i}$'s is negative;
    \item $\sum\limits_{\ell \in \sL}\sum\limits_{i=1}^{n_{\ell}}c^{(\ell)}_{i} \alpha^{(\ell)}_{i} \rho^{(\ell)}_{i} >0$.
\end{enumerate}
With the parameters defined above, define the function $\D (t)$ by
\begin{align}
    \D (t) \coloneqq \sum\limits_{\ell \in \sL} \sum\limits_{i=1}^{n_{\ell}} \alpha^{(\ell)}_{i} \rho^{(\ell)}_{i} (1 - e^{-c^{(\ell)}_{i} t}) \,. \label{eqn:optimal_info}
\end{align}
Let $\epsilon \in (-\infty, \sum\limits_{\ell \in \sL}\sum\limits_{i=1}^{n_{\ell}}c^{(\ell)}_{i} \alpha^{(\ell)}_{i} \rho^{(\ell)}_{i})$, then the following large deviation principle holds 
\begin{align}
    \lim_{N \to \infty} [g_{N}q_{N}]^{-1}\log\P(\rX_{N} \leq  \epsilon \, g_{N} q_{N} )  = - \sup_{t\geq 0} \big( -\epsilon t + \D(t) \big) \,. \label{eqn:LDP_rate_function}
\end{align}
Consequently, given $\epsilon \in (-\infty, \sum\limits_{\ell \in \sL}\sum\limits_{i=1}^{n_{\ell}}c^{(\ell)}_{i} \alpha^{(\ell)}_{i} \rho^{(\ell)}_{i})$, for any $\delta >0$, there exists $N_{0} >0$ such that for all $N > N_{0}$ and $\epsilon_{N} < \epsilon$, the following holds
\begin{align}
    \P(\rX_{N} \leq \epsilon_{N} \, g_{N} q_{N} ) \leq e^{-g_{N} \cdot q_{N} \cdot [\sup_{t\geq 0} \{ -\epsilon t + \D(t)\} - \delta]}\,\,.\notag
\end{align}
\end{lemma}
\begin{proof}[Proof of Lemma ~\ref{lem:LDP_binomial}]
We first prove the upper bound of \eqref{eqn:LDP_rate_function} by calculating moment generating function. By Markov inequality and independence between $\rY^{(\ell)}_{i}$, for any $t\geq 0$, we have
\begin{align*}
\P(\rX_{N} \leq x_{N}) = &\,\P(e^{-t\rX_{N}} \geq e^{-tx_{N}}) \\
\leq &\, \frac{\E e^{-t\rX_{N}}}{e^{-tx_{N}}} = e^{tx_{N}}\prod_{\ell \in \sL}\prod_{i=1}^{n_{\ell}}\E e^{-t c^{(\ell)}_{i} \rY^{(\ell)}_{i}} = e^{tx}\prod_{\ell \in \sL}\prod_{i=1}^{n_{\ell}} \Big(1 - p^{(\ell)}_{i}(1 - e^{-tc^{(\ell)}_{i}}) \Big)^{N^{(\ell)}_{i}}\\
\leq &\, \exp \Big(tx_{N} - \sum\limits_{\ell \in \sL}\sum\limits_{i=1}^{n_{\ell}}N^{(\ell)}_{i} p^{(\ell)}_{i}(1 - e^{-c^{(\ell)}_{i} t}) \Big)\,, \quad (\textnormal{since } 1 - x \leq e^{-x})\\
= &\, \exp \bigg(- (1 + o(1))\cdot g_{N} \cdot q_{N} \cdot \Big( -\epsilon t + \sum\limits_{\ell \in \sL} \sum\limits_{i=1}^{n_{\ell}} \alpha^{(\ell)}_{i} \rho^{(\ell)}_{i} (1 - e^{-c^{(\ell)}_{i} t}) \Big) \bigg)\,,
\end{align*}
where we applied $x_{N} = (\epsilon + o(1)) \cdot g_{N} q_{N}$ in the last equality. By taking the supremum of of both sides with respect to $t$, the desired upper bound of \eqref{eqn:LDP_rate_function} follows immediately:
\begin{align*}
   \limsup_{N \to \infty} \frac{\log\P(\rX_{N} \leq x_{N} )}{g_{N} \cdot q_{N}} \leq -\sup_{t\geq 0} \Big(-\epsilon t + \D(t) \Big),
\end{align*}
where $\D(t)$ is defined in \eqref{eqn:optimal_info}.
One more thing to verify is the existence and uniqueness of the extreme point. Define the function $f(t) \coloneqq -t\epsilon + \sum\limits_{\ell \in \sL} \sum\limits_{i=1}^{n_{\ell}} \alpha^{(\ell)}_{i} \rho^{(\ell)}_{i} (1 - e^{-c^{(\ell)}_{i} t})$. For any $t\geq 0$, we have
\begin{align*}
    f^{\prime}(t) = -\epsilon + \sum\limits_{\ell \in \sL} \sum\limits_{i=1}^{n_{\ell}} \alpha^{(\ell)}_{i} c^{(\ell)}_{i}\rho^{(\ell)}_{i} e^{-c^{(\ell)}_{i} t}\,,\quad f^{\prime\prime}(t) = -\sum\limits_{\ell \in \sL} \sum\limits_{i=1}^{n_{\ell}} \alpha^{(\ell)}_{i} (c^{(\ell)}_{i})^2\rho^{(\ell)}_{i} e^{-c^{(\ell)}_{i} t} <0.
\end{align*}
Hence, $f(t)$ is strictly concave since $f^{\prime\prime}(t) < 0$ for all $t \geq 0$. By assumption,
\begin{align*}
    f^{\prime}(0) =  \sum\limits_{\ell \in \sL} \sum\limits_{i=1}^{n_{\ell}} \alpha^{(\ell)}_{i} c^{(\ell)}_{i}\rho^{(\ell)}_{i} - \epsilon >0,
\end{align*}
and $\lim_{t \to \infty}f^{\prime}(t) = -\infty$ since at least one of the $c_{i}^{(\ell)}$'s is negative. Consequently, there exists a unique $t^{\star} > 0$ such that $f^{\prime}(t^{\star}) = 0$ since $f^{\prime}(t)$ is continuous and strictly decreasing. Note that $f(0) = 0$ and we denote $f(t^{\star}) \coloneqq \sup_{t\geq 0}(-\epsilon t + \D(t)) > 0$. We also note that the proof of this lemma can not be finished by directly applying {\cite[Lemma H.5]{Abbe2022LPT}} since $f(t)$ is not convex.

We now turn to the lower bound. For each $\ell \in \sL$, let $\{y^{(\ell)}_{i}\}_{i\in[n_{\ell}]}$ be a collection of realizations sampled from the random variables $\{\rY^{(\ell)}_{i}\}_{i\in[n_{\ell}]}$, satisfying
\begin{align}
    \sum\limits_{\ell \in \sL}\sum\limits_{i=1}^{n_{\ell}} c_{i}^{(\ell)}y^{(\ell)}_{i} \leq x_{N} = \epsilon\cdot (1 + o(1)) \cdot g_{N}\cdot q_{N}\,.\label{eqn:condition_lower_bound}
\end{align}
Such a collection of $\{y^{(\ell)}_{i}\}_{i\in[n_{\ell}]}$ do exists. Recall that $f^{\prime}(t^{\star}) = 0$ for some $t^{\star} >0$, which is equivalent to
\begin{align*}
    \sum\limits_{\ell \in \sL} \sum\limits_{i=1}^{n_{\ell}} c^{(\ell)}_{i} \alpha^{(\ell)}_{i} \rho^{(\ell)}_{i} e^{-c^{(\ell)}_{i} t^{\star}} = \epsilon.
\end{align*}
For each $\ell \in \sL$ and $i\in [n_{\ell}]$ , let $y_{i}^{(\ell)} = (1 - o(1))\cdot \tau^{(\ell)}_{i} g_{N} q_{N}$ where we denote $\tau^{(\ell)}_{i} \coloneqq \alpha^{(\ell)}_{i} \rho^{(\ell)}_{i} e^{-c^{(\ell)}_{i} t^{\star}}$. Obviously, the so constructed collection of samples satisfies \eqref{eqn:condition_lower_bound}, and we have
\begin{align}\label{eqn:binomial_lower}
    \P(\rX_{N}\leq x_{N}) \geq \prod_{\ell \in \sL}\prod_{i=1}^{n_{\ell}}\P(\rY^{(\ell)}_{i} = y^{(\ell)}_{i}) = \prod_{\ell \in \sL}\prod_{i=1}^{n_{\ell}} \binom{N^{(\ell)}_{i}}{y^{(\ell)}_{i}}(p^{(\ell)}_{i})^{y^{(\ell)}_{i}}(1 - p^{(\ell)}_{i})^{N^{(\ell)}_{i} - y^{(\ell)}_{i}}\,.
\end{align}
Note that $y^{(\ell)}_{i} \ll \sqrt{N_i^{(\ell)}}$ since $g_{N} \ll N\log^{-2}(N)$, then by Lemma \ref{lem:stirling},
\begin{align*}
    &\, \log \left[ \binom{N^{(\ell)}_{i}}{y^{(\ell)}_{i}}(p^{(\ell)}_{i})^{y^{(\ell)}_{i}}(1 - p^{(\ell)}_{i})^{N^{(\ell)}_{i} - y^{(\ell)}_{i}} \right] \\
    \geq &\, y^{(\ell)}_{i} \log \Bigg( \frac{eN^{(\ell)}_{i}}{y^{(\ell)}_{i}} \Bigg) - \frac{1}{2}\log(y^{(\ell)}_{i}) - \frac{1}{12 y^{(\ell)}_{i}} - \log(4\sqrt{2\pi}) \\
    &\, \quad \quad \quad \quad \quad \quad \quad \quad \quad \quad \quad \quad + y^{(\ell)}_{i} \log(p_i^{(\ell)}) + (N^{(\ell)}_{i} - y^{(\ell)}_{i})\log(1 - p_i^{(\ell)})\,\\
    = &\, y^{(\ell)}_{i} \log \Bigg( \frac{eN^{(\ell)}_{i} p^{(\ell)}_{i}}{(1 - p^{(\ell)}_{i})y^{(\ell)}_{i}} \Bigg) + N^{(\ell)}_{i} \log(1 - p_i^{(\ell)}) - \frac{1}{2}\log(y^{(\ell)}_{i}) - \frac{1}{12y^{(\ell)}_{i}} - \log(4\sqrt{2\pi})\,.
\end{align*}
Moreover, we know that
\begin{align*}
    &\, y^{(\ell)}_{i} \log \Bigg( \frac{eN^{(\ell)}_{i} p^{(\ell)}_{i}}{(1 - p^{(\ell)}_{i})y^{(\ell)}_{i}} \Bigg) = (1 - o(1))g_{N} \cdot q_{N} \cdot \tau^{(\ell)}_{i} \log \bigg(\frac{e \alpha^{(\ell)}_{i} \rho^{(\ell)}_{i}}{\tau^{(\ell)}_{i}} \bigg)\,,\\
    &\, N^{(\ell)}_{i} \log(1 - p_i^{(\ell)}) = -(1 + o(1)) g_{N}\cdot q_{N} \cdot \alpha^{(\ell)}_{i} \rho^{(\ell)}_{i}\,, \quad \Big(\, \log(1 - x) = -x \textnormal{ for } x=o(1) \Big)\,,\\
    &\, \frac{1}{2}\log(y^{(\ell)}_{i})  - \frac{1}{12y^{(\ell)}_{i}} - \log(4\sqrt{2\pi}) = o(g_{N}\cdot q_{N})\,.
\end{align*}
Consequently, by taking logarithm on both side of \eqref{eqn:binomial_lower}, we have 
\begin{align*}
    \log \P(\rX_{N}\leq x_{N}) \geq -(1 + o(1)) g_{N}\cdot q_{N} \Bigg( \sum\limits_{\ell \in \sL} \sum\limits_{i=1}^{n_{\ell}} \bigg[ \alpha^{(\ell)}_{i} \rho^{(\ell)}_{i} - \tau^{(\ell)}_{i} \log \bigg(\frac{e \alpha^{(\ell)}_{i} \rho^{(\ell)}_{i}}{\tau^{(\ell)}_{i}} \bigg) \bigg] \Bigg)\,.
\end{align*}
Plugging the facts $\tau^{(\ell)}_{i} = \alpha^{(\ell)}_{i} \rho^{(\ell)}_{i} e^{-c^{(\ell)}_{i} t^{\star}}$ and $\sum\limits_{\ell \in \sL} \sum\limits_{i=1}^{n_{\ell}} c^{(\ell)}_{i} \alpha^{(\ell)}_{i} \rho^{(\ell)}_{i} e^{-c^{(\ell)}_{i} t^{\star}} = \epsilon$, we have
    \begin{align*}
        &\,\sum\limits_{\ell \in \sL} \sum\limits_{i=1}^{n_{\ell}} \bigg[ \alpha^{(\ell)}_{i} \rho^{(\ell)}_{i} - \tau^{(\ell)}_{i} \log \bigg(\frac{e \alpha^{(\ell)}_{i} \rho^{(\ell)}_{i}}{\tau^{(\ell)}_{i}} \bigg) \bigg] = \sum\limits_{\ell \in \sL} \sum\limits_{i=1}^{n_{\ell}}  \alpha^{(\ell)}_{i} \rho^{(\ell)}_{i} \bigg[1 - (1 + c^{(\ell)}_{i} t^{\star}) e^{-c^{(\ell)}_{i} t^{\star}} \bigg]\\
        =&\, - \sum\limits_{\ell \in \sL} \sum\limits_{i=1}^{n_{\ell}} \alpha^{(\ell)}_{i} \rho^{(\ell)}_{i} c^{(\ell)}_{i} e^{-c^{(\ell)}_{i} t^{\star}} \cdot t^{\star} + \sum\limits_{\ell \in \sL} \sum\limits_{i=1}^{n_{\ell}} \alpha^{(\ell)}_{i} \rho^{(\ell)}_{i} \cdot ( 1 - e^{-c^{(\ell)}_{i} t^{\star}} ) \\
        =&\, -\epsilon t^{\star} + \sum\limits_{\ell \in \sL} \sum\limits_{i=1}^{n_{\ell}} \alpha^{(\ell)}_{i} \rho^{(\ell)}_{i} ( 1 - e^{-c^{(\ell)}_{i} t^{\star}} ) = f(t^{\star}). 
    \end{align*}
Therefore, by taking infimum of both sides, we have
\begin{align*}
    \liminf_{N \to \infty} \frac{\log \P(\rX_{N}\leq x_{N})}{g_{N}\cdot q_{N}} \geq - \sup_{t\geq 0} \big( -\epsilon t + \D(t) \big)\,\,,
\end{align*}
which complete the proof for the desired lower bound.
\end{proof}

%%%%%%%%%%%%%%%%%%%%%%%%%%%%%%%%%%%%%%%%%%%%%%%%%%%%%%%%%%%%
%%               Achievability via matrix                 %%
%%%%%%%%%%%%%%%%%%%%%%%%%%%%%%%%%%%%%%%%%%%%%%%%%%%%%%%%%%%%
\section{Deferred proofs in \Cref{sec:achievability_matrices_binary}}\label{app:achievability_matrices_binary}

\subsection{Preliminary results}
The analysis of \Cref{thm:achievability_matrices} replies heavily on Lemmas \ref{lem:LDP_AM}, \ref{lem:row_concentration} and \Cref{thm:concentration}, which are the key ingredients to generalize the result from uniform hypergraphs to non-uniform hypergraphs.

\begin{lemma}[Row concentration]\label{lem:row_concentration}
    Following the conventions in model \ref{def:non_uniform_HSBM_binary}, let $p^{(\ell)}_{e}$ denote the probability of the $\ell$-hyperedge $e$ being sampled. Denote 
    \begin{align}
        p^{(\ell)} = \max_{e\in [\gV]^{\ell} } p^{(\ell)}_{e}, \quad \rho_{N} \coloneqq N \sum\limits_{\ell \in \sL} p^{(\ell)}\binom{N-2}{\ell - 2}.\notag
    \end{align}
     Denote $(1\vee x) = \max\{1, x\}$. For some constant $c>0$, define the function 
    \begin{align}\label{eqn:varphix}
        \varphi(x) = (5 + c)\cdot [1\vee \log(1/x)]^{-1}.
    \end{align}
    For any non-zero $\rvw \in \R^{N}$, define the map $\zeta(\rvw, c) : \R^{N} \times \R \mapsto \R$ as
    \begin{align}\label{eqn:zetawc}
         \zeta(\rvw, c) = \varphi\Big( \frac{\|\rvw\|_2}{\sqrt{N} \|\rvw\|_{\infty} } \Big) \cdot \|\rvw\|_{\infty} \rho_{N}.
    \end{align}
    Then for any $v\in \gV$ and $\rvw \in \R^{N}$ independent of $(\rmA - \rmA^{\star})_{v:}$, when $\rho_{N} \gtrsim \log(N)$, the following event holds with probability at least $1 - 2e^{-(3 +c)q_{N}}$
    \begin{align}\label{eqn:rowConcentration}
        |(\rmA - \rmA^{\star})_{v:}\, \rvw| \leq \zeta(\rvw, c).
    \end{align}
    \end{lemma}
    \begin{proof}[Proof of Lemma ~\ref{lem:row_concentration}]
    Without loss of generality, we assume $\|\rvw\|_{\infty} = 1$ since rescaling $\rvw$ does not change the event \eqref{eqn:rowConcentration}. Let $[\gV]^{\ell}$ denote the family of $\ell$-subsets of $V$, i.e., $[\gV]^{\ell} = \{\setT| \setT\subseteq \gV, |\setT| = \ell \}$. By definition \eqref{eqn:adjacency_matrix_entry_binary} of the adjacency matrix $\rmA$, we have 
        \begin{align}
            \rS_{v} \coloneqq (\rmA - \rmA^{\star})_{v:} \, \rvw = \sum\limits_{j=1}^{|\gV|} (\ermA_{vj} - \E \ermA_{vj}) \ervw_{j} = \sum\limits_{j=1}^{|\gV|} \ervw_{j} \indi{\{ v \neq j\} } \cdot \sum\limits_{\ell \in \sL}\,\,\,\,\sum\limits_{\substack{e\in [\gV]^{\ell}\\ e \supset \{v, j\}} }(\etA^{(\ell)}_{e} - \E \etA^{(\ell)}_{e} )\,\notag
        \end{align}
        Due to the independence between hyperedges and Markov inequality, for any $t>0$, we have
        \begin{align}
            \P( \rS_{v} \geq \delta ) = \P(e^{t\rS_{v}} \geq e^{t\delta})\leq e^{-t\delta} \prod_{j=1, j \neq v}^{|\gV|} \prod_{\ell \in \sL} \prod_{\substack{e\in [\gV]^{\ell}\\ e \supset \{v, j\}} }\E \exp[t\ervw_{j}(\etA^{(\ell)}_{e} - \E \etA^{(\ell)}_{e}) ].\notag
        \end{align}
        For simplicity, let $p^{(\ell)}_{e}$ denote the probability of the $\ell$-hyperedge $e$ being generated, then
    \begin{align*}
        &\, \E \exp[t\ervw_{j}(\etA^{(\ell)}_{e} - \E \etA^{(\ell)}_{e}) ] \\
        =&\, p^{(\ell)}_{e} e^{t\ervw_{j}(1 - p^{(\ell)}_{e})} + (1 - p^{(\ell)}_{e} ) e^{- t \ervw_{j} p^{(\ell)}_{e}} = e^{-t \ervw_{j} p^{(\ell)}_{e}}[1 - p^{(\ell)}_{e} + p^{(\ell)}_{e} e^{t\ervw_{j}}]\\
        \leq &\, \exp \big( -t\ervw_{j}p^{(\ell)}_{e} + p^{(\ell)}_{e}(e^{t \ervw_{j}} - 1)\big) = \exp \big( -p^{(\ell)}_{e}( 1 + t\ervw_{j} - e^{t \ervw_{j}})\big) \leq \exp\Big( \frac{e^{t\|\rvw\|_{\infty}}}{2}t^2\ervw_{j}^2 p^{(\ell)}_{e} \Big)
    \end{align*}
where we use the facts $1 + x \leq e^{x} \leq 1 + x + \frac{e^r}{2}x^2$ for $|x|\leq r$ in the last two inequalities. Taking the logarithm of $\P( \rS \geq \delta )$ and plugging in $\|\rvw\|_{\infty} = 1$, we have
    \begin{align}
        \log\P( \rS_{v} \geq \delta ) \leq&\, -t \delta + \sum\limits_{j=1}^{|\gV|} \indi{\{ v \neq j\} } \cdot \sum\limits_{\ell \in \sL}\,\,\,\,\sum\limits_{\substack{e\in [\gV]^{\ell}\\ e \supset \{v, j\}} } \frac{e^{t\|\rvw\|_{\infty}}}{2}t^2 \ervw_{j}^2 p^{(\ell)}_{e} \notag \\
        \leq &\, -t\delta + \frac{e^{t}}{2} t^2 \sum\limits_{\ell \in \sL} p^{(\ell)} \sum\limits_{\substack{e\in [\gV]^{\ell}\\ e \supset \{v, j\}} } \sum\limits_{j=1, j\neq v}^{|\gV|} \ervw_{j}^2 \notag \\
        \leq &\, -t\delta + \frac{e^{t}}{2} t^2 \|\rvw\|_2^2\cdot \sum\limits_{\ell \in \sL} p^{(\ell)} \binom{N-2}{\ell - 2} \leq -t \delta + \frac{e^{t}}{2} t^2 \|\rvw\|_2^2\cdot \rho_{N}, \notag
    \end{align}
    where the last two lines hold since the cardinality of $\ell$-hyperedges with $e\supset \{v, j\}$ is at most $\binom{N-2}{\ell - 2}$ and $\sum\limits_{j=1, j\neq v}^{|\gV|} \ervw_{j}^2 \leq \|\rvw\|_2^2$ with the definition of $\rho_{N}$ \eqref{eqn:rho_binary}. We take $t = 1 \vee \log(\sqrt{N}/\|\rvw\|_2)$ and note that $\log(\sqrt{N}/\|\rvw\|_2) \geq 0$ since $\|\rvw\|_2 \leq \sqrt{N}\|\rvw\|_{\infty} = \sqrt{N}$, then $t \leq 1 + \log(\sqrt{N}/\|\rvw\|_2)$. Also, $(1 \vee \log x )^2 \leq x$ for $x\geq 1$, then
    \begin{align}
         \frac{e^{t}}{2} t^2 \|\rvw\|_2^2 \leq \frac{e\sqrt{N}}{2} t^2\|\rvw\|_2 = \frac{e\sqrt{N}}{2} \|\rvw\|_2 \Big[ 1 \vee \log\Big( \frac{\sqrt{N}}{\|\rvw\|_2} \Big) \Big]^2 \leq \frac{e}{2}N.\notag
    \end{align}
    To conclude the result, for some constant $c>0$, we take 
    \begin{align}
        \delta = \varphi\Big( \frac{\|\rvw\|_2}{\sqrt{N} } \Big) \cdot N \cdot \sum\limits_{\ell \in \sL} p^{(\ell)} \binom{N-2}{\ell - 2} = \varphi\Big( \frac{\|\rvw\|_2}{\sqrt{N} } \Big) \rho_{N}.\notag
    \end{align}
    Therefore, 
    \begin{align}
        \log\P( \rS_{v} \geq \delta ) \leq \Big( - (5 + c) + \frac{e}{2} \Big)\rho_{N} \leq -(3 + c)\rho_{N} \lesssim -(3 + c)q_{N}.\notag
    \end{align}
    By replacing $\rvw$ with $-\rvw$, the similar bound for lower tail could be obtained. 
\end{proof}

Our analysis relies on the concentration result Theorem \ref{thm:concentration}. It holds for \emph{inhomogeneous Erd\H{o}s-R\'{e}nyi} random hypergraphs \ref{def:inhomo_ER_graph}, which is a natural generalization of standard random hypergraph models, where each edge is included independently with probabilities that can vary across different vertex subsets.
\begin{definition}[Inhomogeneous Erd\H{o}s-R\`enyi hypergraph]\label{def:inhomo_ER_graph}
    Let $\tQ^{(\ell)} \in ([0, 1]^{N})^{\otimes \ell}$ be a symmetric probability tensor, i.e., $\etQ_{i_1, \ldots, i_{\ell}} = \etQ_{i_{\pi(1)}, \ldots, i_{\pi(\ell)}}$ for any permutation $\pi$ on $[\ell]$, where $\ell \geq 2$ is some finite integer. Let $\gH_{\ell} = (\gV, \gE_{\ell})$ denote inhomogeneous $\ell$-uniform Erd\H{o}s-R\'{e}nyi hypergraph associated with the probability tensor $\tQ^{(\ell)}$. Let $\tA^{(\ell)}$ denote the adjacency tensor of $\gH_{\ell}$, where each $\ell$-hyperedge $e = \{i_1, \ldots, i_{\ell}\}\subset \gV$ appears with probability $\P(\etA^{(\ell)}_{e} = 1) = \etQ^{(\ell)}_{i_1,\dots, i_{\ell}}$. Let $\sL = \{\ell \mid \ell \geq 2, \ell \in \N \}$ be a finite set of integers, where $\LM$ denotes its maximum element. An inhomogeneous non-uniform Erd\H{o}s-R\'{e}nyi hypergraph is the union of uniform ones, i.e., $\gH = \cup_{\ell \in \sL} \gH_{\ell}$.
\end{definition}

\begin{theorem}\cite{Dumitriu2023OptimalAE}\label{thm:concentration}
Let $\gH = \cup_{\ell \in \sL} \gH_{\ell}$ be the inhomogeneous non-uniform Erd\H{o}s-R\'{e}nyi hypergraph in \Cref{def:inhomo_ER_graph}, associated with the probability tensors $\{\tQ^{(\ell)}\}_{\ell \in \sL}$. For each $\ell \in \sL$, we rescale the tensor by $\tD^{(\ell)} \coloneqq \binom{N-1}{\ell - 1}\tQ^{(\ell)}$, and denote $\etD^{(\ell)}_{\max}\coloneqq \max_{i_1, \ldots, i_{\ell} \in \gV} \etD^{(\ell)}_{i_1,\dots, i_{\ell}}$. Denote $\etD_{\max} \coloneqq \sum\limits_{\ell \in \sL} \etD^{(\ell)}_{\max}$. For some constant $\const_{\eqref{eqn:assumption_d}}>0$, suppose that
\begin{align}
    \etD_{\max} \coloneqq \sum\limits_{\ell \in \sL} \etD^{(\ell)}_{\max} \geq \const_{\eqref{eqn:assumption_d}}\cdot \log(N)\,.\label{eqn:assumption_d}
\end{align}
Then with probability at least $1-2N^{-10}- 2e^{-N}$, the adjacency matrix $\rmA$ of $\gH$ satisfies
\begin{align}
    \|\rmA - \E \rmA \| \leq \const_{\eqref{eqn:concentrateA}}\cdot \sqrt{\etD_{\max}}\,,\label{eqn:concentrateA}
\end{align}
where the constants $\const_{\eqref{eqn:concentrateA}}\coloneqq 10\LM^2 + 2\beta$, with $\beta= \beta_0 \sqrt{\beta_1} + \LM$ and $\beta_0$, $\beta_1$%\footnote{The existence of such $\beta_1$ is guaranteed since $g(x) = x\log(x) - x + 1$ takes its minimum at $x=1$ and increases when $x>1$.} 
satisfying
 \begin{align}
    &\, \beta_0 = 16+32 \LM (1+e^2)+1792(1+e^{-2})\LM^2, \notag \\
    &\, \LM^{-1}\beta_1 \log(  \LM^{-1}\beta_1) -  \LM^{-1}\beta_1 + 1 > 11/\const_{\eqref{eqn:assumption_d}}\,.\notag
\end{align} 
\end{theorem}

\subsection{Leave-one-out analysis}
%Recently, it has been extensively used in many problems under the large umbrella of data science, such as phase synchronization \cite{Zhong2018NearOB}, nonconvex optimization \cite{Ma2018ImplicitRI, Chen2021ConvexAN, Chen2021BridgingCA}, spectral clustering \cite{Abbe2020EntrywiseEA, Deng2021StrongCG, Abbe2022LPT, Gaudio2023CommunityDI, Zhang2022LeaveoneoutSS}, principal component analysis \cite{Yan2021InferenceFH}, matrix completion \cite{Chen2019InferenceAU, Chen2019NoisyMC, Ding2020LeaveOO} and random feature ridge regression \cite{Wang2023OverparameterizedRF}. Readers may refer to \cite{Chen2021SpectralMF} for a detailed survey.
Leave-one-out analysis is a powerful tool in the area of random matrix theory when analyzing the resolvent of matrices by applying Schur complement formula \cite{Bai2010Spectral}. We adapt this idea to our problem. For each $\ell \in \sL$, define $N$ auxiliary tensors $\tA^{(\ell, -1)}$, $\ldots$ $\tA^{(\ell, -N)}$, where $\tA^{(\ell, -v)} \in (\R^{|\gV|})^{\otimes \ell}$ is obtained by zeroing out the hyperedges containing $v$. Let $\rmA^{(\ell, -v)}$ denote the adjacency matrix obtained from $\tA^{(\ell, -v)}$, and define 
\begin{align}\label{eqn:A-v}
    \rmA^{(-v)} = \sum\limits_{\ell \in \sL} \rmA^{(\ell, -v)}.
\end{align}

The rest of this subsection is devoted to results concerning $\rmA, \rmA^{\star}$ and $\rmA^{(-v)}$, in preparation for the proof of Proposition \ref{prop:entrywise_diff_A} in the next subsection.

\begin{lemma}\label{lem:operatorNormA}
    Deterministically, $ \|\rmA^{\star}\|_{2\to \infty} \lesssim \rho_{N}/\sqrt{N}$. With probability at least $1-O(N^{-10})$,
        \begin{align}
             \|\rmA\|_{2\to \infty} \lesssim \sqrt{\rho_{N}} + \rho_{N}/\sqrt{N}.\notag
        \end{align}
    For each $v\in \gV$, the following holds with probability at least $1 - O(N^{-10})$
    \begin{align}
        \|\rmA - \rmA^{(-v)}\| \lesssim \|\rmA\|_{2 \to \infty},  \quad \|\rmA^{(-v)} - \rmA^{\star}\| \lesssim 2\sqrt{\rho_{N}} + \rho_{N}/\sqrt{N}.\notag
    \end{align}
\end{lemma}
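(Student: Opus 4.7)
The plan is to prove the four bounds in sequence, with the last two reduced to the first two via the triangle inequality. Throughout, I will use the basic inequality $\|\bX\|_{2\to\infty} \leq \|\bX\|_{op}$ together with \Cref{thm:concentration}, observing that under regime \eqref{eqn:exactRevoceryRegimeBinary} one has $d_{\max} \asymp \log n \asymp \rho_n$, so the concentration bound gives $\|\bA - \bA^{\star}\|_{op} \lesssim \sqrt{\rho_n}$ with probability at least $1-O(n^{-10})$.

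The bound $\|\bA^{\star}\|_{2\to\infty} \lesssim \rho_n/\sqrt{n}$ is immediate from the block structure in \eqref{eqn:EA}: each row of $\bA^{\star}$ has $n/2$ entries equal to $p$ and $n/2$ entries equal to $q$ (up to the diagonal correction $-p$), giving a row norm of order $\sqrt{n(p^2+q^2)}$, which is $\lesssim \rho_n/\sqrt{n}$ by \eqref{eqn:pq} and \eqref{eqn:rhon}. For $\|\bA\|_{2\to\infty}$ I then apply the triangle inequality $\|\bA\|_{2\to\infty} \leq \|\bA^{\star}\|_{2\to\infty} + \|\bA - \bA^{\star}\|_{op}$ combined with the concentration bound above.

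The main obstacle is the third estimate $\|\bA - \bA^{(-v)}\|_{op} \lesssim \|\bA\|_{2\to\infty}$, which I will establish deterministically in $\bA$. The difficulty is that, once $m\geq 3$, $\bA - \bA^{(-v)}$ is not supported merely on the $v$-th row and column, since any hyperedge $e\supset\{v,i,j\}$ also contributes to $(\bA - \bA^{(-v)})_{ij}$. I will fix a unit vector $\bx\in\R^n$ and split its quadratic form as
\begin{align*}
    \bx^\top (\bA - \bA^{(-v)}) \bx \,=\, 2 x_v\, \bA_{v:}\bx \,+\, \sum_{\substack{i,j\neq v\\ i\neq j}} d_{v,i,j}\, x_i x_j,
\end{align*}
where $d_{v,i,j}$ is the number of hyperedges containing $\{v,i,j\}$. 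The first piece is bounded by $2\|\bA_{v:}\|_2\leq 2\|\bA\|_{2\to\infty}$ via $|x_v|\leq 1$. For the second, reorganize by hyperedge $e\ni v$: the inner sum equals $(\sum_{i\in e\setminus\{v\}} x_i)^2 - \sum_{i\in e\setminus\{v\}} x_i^2 \leq (m-2)\sum_{i\in e\setminus\{v\}} x_i^2$ by Cauchy--Schwarz, and swapping the order of summation rewrites the total contribution as $\sum_{i\neq v} x_i^2 \sum_{m\in\cM}(m-2)\, d^{(m)}_{vi} \leq (M-2) \max_i A_{vi} \leq (M-2)\|\bA\|_{2\to\infty}$, using that $|\cM|$ is finite with $M = \max\cM$ and $A_{vi} = \sum_m d^{(m)}_{vi}$.

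The fourth bound then follows from
\begin{align*}
    \|\bA^{(-v)} - \bA^{\star}\|_{op} \,\leq\, \|\bA - \bA^{\star}\|_{op} + \|\bA - \bA^{(-v)}\|_{op}
\end{align*}
by combining \Cref{thm:concentration} with the previous two parts, and a union bound over the associated high-probability events yields the stated $1-O(n^{-10})$ guarantee.
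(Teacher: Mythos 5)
Your proof is correct, and parts one, two, and four follow the paper's own route exactly (row-norm estimate for $\bA^{\star}$, then triangle inequality plus \Cref{thm:concentration}). The genuine divergence is in the third bound $\|\bA - \bA^{(-v)}\| \lesssim \|\bA\|_{2\to\infty}$: the paper passes to the Frobenius norm and bounds each row of $\bA - \bA^{(-v)}$ in $\ell^1$ by observing that any $m$-hyperedge containing $\{v,i\}$ contributes at most $m-1$ times to row $i$, giving $\|(\bA - \bA^{(-v)})_{i:}\|_1 \lesssim A_{iv}$ and hence $\|\bA - \bA^{(-v)}\|_{\frob}^2 \lesssim \|\bA_{v:}\|_2^2$. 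You instead bound the quadratic form directly: split off the rank-one piece $2x_v\bA_{v:}\bx$, reorganize the remainder as a sum over hyperedges $e\ni v$, apply Cauchy--Schwarz on the remaining $m-1$ coordinates, and swap the order of summation to land on $\sum_i x_i^2 A_{vi}$. Both are fine and give the same constant-order dependence on $M=\max\cM$; the Frobenius route is slightly more mechanical, your route avoids the $\frob\geq\mathrm{op}$ slack (irrelevant here since the matrix is nearly rank one) at the cost of a bit more bookkeeping.

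One small gap worth closing: the operator norm of a symmetric matrix is $\sup_{\|\bx\|=1}|\bx^\top\bB\bx|$, so you must control the quadratic form from \emph{both} sides. You bound the hyperedge contribution above via $(\sum_{i\in e\setminus\{v\}}x_i)^2 - \sum_{i\in e\setminus\{v\}}x_i^2 \leq (m-2)\sum_{i\in e\setminus\{v\}}x_i^2$, but you should also record the trivial lower bound $\geq -\sum_{i\in e\setminus\{v\}}x_i^2$ (from dropping the nonnegative square), which after the same reorganization gives $\geq -\sum_{i\neq v}x_i^2 A_{vi} \geq -\|\bA\|_{2\to\infty}$. With that line added, the absolute value is bounded by $(M-1)\|\bA\|_{2\to\infty}$ for the off-diagonal piece, and the argument is complete.
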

\begin{proof}[Proof of Lemma ~\ref{lem:operatorNormA}]
    Let $p^{(\ell)} = \max_{e}p_e^{(\ell)}$, then $\rmA^{\star}_{ij} = \E[\rmA_{ij}] \leq \sum\limits_{\ell \in \sL} p^{(\ell)}\binom{N-2}{\ell - 2}$ for each $i \neq j$.  According to definition of $\rho_{N}$ in \eqref{eqn:rho_binary}, we have $\rho_{N} \asymp N \sum\limits_{\ell \in \sL} p^{(\ell)}\binom{N-2}{\ell - 2}$. The proof of the first sentence is finished by definition, since
        \begin{align}
            \|\rmA^{\star}\|_{2\to \infty} = \max_{\|\rvx\|_2 = 1} \|\rmA^{\star}\rvx\|_{\infty} = \max_{i\in \gV} \|\rmA^{\star}_{i:}\|_2 \leq \sqrt{N} \sum\limits_{\ell \in \sL} p^{(\ell)}\binom{N-2}{\ell - 2} \asymp \frac{\rho_{N}}{\sqrt{N}}.\notag
        \end{align}
     For the second argument, note that $\|\rmA\|_{2\to \infty} \leq \|\rmA\|$, then by \Cref{thm:concentration}, the following hold with probability at least $1-O(N^{-10})$ (where we take $q_{N} =\log(N)$)
        \begin{align}
            \|\rmA\|_{2\to \infty} \leq &\, \|\rmA - \rmA^{\star}\|_{2\to \infty} + \|\rmA^{\star}\|_{2\to \infty} \leq \|\rmA - \rmA^{\star}\| + \|\rmA^{\star}\|_{2\to \infty}\lesssim \sqrt{\rho_{N}} + \rho_{N}/\sqrt{N}.\notag
        \end{align}
     For the last part,  recall the definition of $\rmA^{(-v)}$ in \eqref{eqn:A-v}, then $(\rmA - \rmA^{(-v)})_{v:} = \rmA_{v:}$ while for $i\neq v$,
        \begin{align}
            (\rmA - \rmA^{(-v)})_{ij} = \indi{i \neq j } \cdot \sum\limits_{\ell \in \sL}\,\,\,\,\sum\limits_{\substack{e\in \gE_{\ell}\\ e \ni v,\,\, e \supset \{i, j\}} }\etA^{(\ell)}_{e}.\notag
        \end{align}
    Also, the hyperedge $e \supset \{i, v\}$ is counted $\ell - 1$ times in $i$th row, then
    \begin{align}
        \|(\rmA - \rmA^{(-v)})_{i:}\|_2 \leq \|(\rmA - \rmA^{(-v)})_{i:}\|_1 = \sum\limits_{\ell \in \sL} (\ell - 1)\ermA_{iv}.\notag
    \end{align}
    Since $\LM$ is finite, then
    \begin{align*}
        \|\rmA - \rmA^{(-v)}\|^2 \leq &\, \|\rmA - \rmA^{(-v)}\|_{\frob}^2 = \sum\limits_{i=1}^{|\gV|} \|(\rmA - \rmA^{(-v)})_{i:}\|_2^2 = \|\rmA_{v:}\|_2^2 + \sum\limits_{i\neq v} \|(\rmA - \rmA^{(-v)})_{i:}\|_2^2\\
        \leq &\, \|\rmA_{v:}\|_2^2 + \sum\limits_{i\neq v}\Big( \sum\limits_{\ell \in \sL} (\ell - 1)\ermA_{iv} \Big)^2 = \|\rmA_{v:}\|_2^2 + \Big(\sum\limits_{\ell \in \sL} (\ell - 1)\Big)^2 \cdot \sum\limits_{i\neq v}\ermA_{iv}^2\\
        \leq &\,\bigg( \Big(\sum\limits_{\ell \in \sL} (\ell - 1)\Big)^2 + 1 \bigg) \cdot \|\rmA_{v:}\|_2^2 \lesssim \|\rmA\|_{2\to \infty}.
    \end{align*}
    Consequently, with probability at least $1 - O(N^{-10})$, the following holds for each $v\in \gV$,
    \begin{align*}
        \|\rmA^{(-v)} - \rmA^{\star}\| \leq &\, \|\rmA^{(-v)} - \rmA\| + \|\rmA - \rmA^{\star}\| \lesssim \|\rmA\|_{2 \to \infty} + \|\rmA - \rmA^{\star}\| \lesssim \rho_{N}/\sqrt{N} + 2\sqrt{\rho_{N}},
    \end{align*}
    which complete the proof.
\end{proof}

Let $\{\lambda_i\}_{i=1}^{|\gV|}$, $\{\lambda_i^{(-v)}\}_{i=1}^{|\gV|}$ and $\{\lambda_i^{\star}\}_{i=1}^{|\gV|}$ denote the eigenvalues of $\rmA$, $\rmA^{(-v)}$ and $\rmA^{\star}$ respectively, where the eigenvalues are sorted in the decreasing order.
\begin{corollary}\label{cor:eigenvalue_approx_adj}
     For each $i\in \gV$, the following holds with probability at least $1-O(N^{-10})$,
    \begin{align*}
        |\lambda_i - \lambda_i^{(-v)}| \lesssim \sqrt{\rho_{N}} + \rho_{N}/\sqrt{N}, \quad |\lambda_i^{\star} - \lambda_i^{(-v)}| \lesssim 2\sqrt{\rho_{N}} + \rho_{N}/\sqrt{N}, \quad |\lambda_i - \lambda_i^{\star}| \lesssim \sqrt{\rho_{N}}.
    \end{align*}
    In particular in our model \ref{def:non_uniform_HSBM_binary}, with probability at least $1-O(N^{-10})$,
    \begin{align*}
        |\lambda_i| \asymp |\lambda^{\star}_i| \asymp |\lambda^{(-v)}_i| \asymp \rho_{N}, \quad  i = 1, 2.
    \end{align*}
\end{corollary}
    \begin{proof}[Proof of Corollary \ref{cor:eigenvalue_approx_adj}]
        By Weyl's inequality \ref{lem:weyl} and Lemma ~\ref{lem:operatorNormA}, for any $i\in \gV$,
        \begin{align*}
            |\lambda_i - \lambda_i^{(-v)}| \leq &\, \|\rmA - \rmA^{(-v)}\| \lesssim \sqrt{\rho_{N}} + \rho_{N}/\sqrt{N}, \quad |\lambda_i - \lambda_i^{\star}| \leq \|\rmA - \rmA^{\star}\| \lesssim \sqrt{\rho_{N}},\\
            |\lambda_i^{\star} - \lambda_i^{(-v)}| \leq &\, \|\rmA^{\star} - \rmA^{(-v)}\| \lesssim 2\sqrt{\rho_{N}} + \rho_{N}/\sqrt{N}.
        \end{align*}
       For the second part, $\rho_{N} \gtrsim \log(N)$, then the result follows since $\sqrt{\rho_{N}} \ll \rho_{N}$ and $\rho_{N}/\sqrt{N} \ll \rho_{N}$. 
\end{proof}

\begin{lemma}\label{lem:firstApproximation}
    Under model \eqref{def:non_uniform_HSBM_binary}, the following holds with probability at least $1 - O(N^{-10})$,
    \begin{subequations}
        \begin{align}
            &\, \max_{v\in \gV} \|s^{(-v)} \rvu^{(-v)} - s\rvu\|_2 \lesssim \| \rvu \|_{\infty},\\
            &\, \max_{v\in \gV} \|s^{(-v)} \rvu^{(-v)} - \rvu^{\star} \|_2 \lesssim \frac{1}{ \sqrt{\rho_{N}}} + \frac{1}{\sqrt{N}},\\
            &\, \max_{v\in \gV} \|s^{(-v)} \rvu^{(-v)} - \rvu^{\star} \|_{\infty} \lesssim \| \rvu \|_{\infty} + \frac{1}{\sqrt{N}}.
        \end{align}
    \end{subequations}
\end{lemma}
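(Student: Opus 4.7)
The plan is to prove part (b) first, then part (a), and finally derive (c) from (a) by a triangle inequality. Throughout, \Cref{cor:eigenvalueApproxAdj} provides $|\lambda_2|, |\lambda_2^\star|, |\lambda_2^{(-v)}| \asymp \rho_n$, and since the remaining eigenvalues of $\bA^\star$ are either $\lambda_1^\star = \Theta(\rho_n)$ or equal to $-p$, the eigengap around the second eigenvalue is of order $\rho_n$ for $\bA$, $\bA^{(-v)}$ and $\bA^\star$ alike (Weyl).

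For part (b), apply the Davis--Kahan $\sin\Theta$ bound to the pair $(\bA^{(-v)}, \bA^\star)$ at the second eigenvalue,
\[
\min_{s'\in\{\pm 1\}}\|s'\bu^{(-v)} - \bu^\star\|_2 \lesssim \frac{\|\bA^{(-v)} - \bA^\star\|}{\rho_n} \lesssim \frac{1}{\sqrt{\rho_n}} + \frac{1}{\sqrt{n}}
\]
using \Cref{lem:operatorNormA}. Since the right-hand side is $o(1)$, the minimizing $s'$ is the unique sign placing $s'\bu^{(-v)}$ in the open hemisphere around $\bu^\star$, which by the definition in \eqref{eqn:signsofEigen} is $s^{(-v)}$.

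For part (a), Davis--Kahan applied to $(\bA, \bA^{(-v)})$ yields
\[
\min_{s'}\|s'\bu^{(-v)} - \bu\|_2 \lesssim \frac{\|(\bA - \bA^{(-v)})\bu\|_2}{\rho_n}.
\]
The crucial sharpening is that $\bM := \bA - \bA^{(-v)}$ has all of its nonzero entries concentrated on vertex $v$: $\bM_{v:} = \bA_{v:}$, and for $i\neq v$ one has $\|\bM_{i:}\|_1 \lesssim A_{iv}$ as in the proof of \Cref{lem:operatorNormA}. Hence $|(\bM\bu)_v| = |\lambda u_v| \lesssim \rho_n\|\bu\|_\infty$, while for $i\neq v$, $|(\bM\bu)_i| \leq \|\bM_{i:}\|_1\|\bu\|_\infty \lesssim A_{iv}\|\bu\|_\infty$, so that
\[
\|\bM\bu\|_2^2 \lesssim \rho_n^2\|\bu\|_\infty^2 + \|\bA_{v:}\|_2^2\|\bu\|_\infty^2 \lesssim \rho_n^2\|\bu\|_\infty^2,
\]
using $\|\bA\|_{2\to\infty} \lesssim \sqrt{\rho_n} + \rho_n/\sqrt{n} \ll \rho_n$ from \Cref{lem:operatorNormA}. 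Dividing by $\rho_n$ gives a bound of order $\|\bu\|_\infty$. The same hemisphere argument (invoking part (b) together with the vanilla Davis--Kahan estimate $\|s\bu - \bu^\star\|_2 \lesssim 1/\sqrt{\rho_n}$) identifies the optimal sign with $s\cdot s^{(-v)}$, completing (a). Then (c) follows from
\[
\|s^{(-v)}\bu^{(-v)} - \bu^\star\|_\infty \leq \|s^{(-v)}\bu^{(-v)} - s\bu\|_2 + \|s\bu\|_\infty + \|\bu^\star\|_\infty \lesssim \|\bu\|_\infty + \tfrac{1}{\sqrt{n}}.
\]

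The main obstacle is the sparsity-based sharpening in part (a): the naive estimate $\|\bM\|\cdot\|\bu\|_2 \lesssim \sqrt{\rho_n} + \rho_n/\sqrt{n}$ divided by $\rho_n$ only gives $1/\sqrt{\rho_n} + 1/\sqrt{n}$, not $\|\bu\|_\infty$, and this finer bound is precisely what prevents the delocalization argument in \Cref{prop:entrywiseDiffA} from becoming circular. A secondary subtlety is reconciling the optimal Davis--Kahan sign with the $\bu^\star$-based signs $s, s^{(-v)}$ in \eqref{eqn:signsofEigen}, which is resolved by first securing $\ell_2$ closeness to $\bu^\star$ via (b) and the analogous estimate for $\bu$.
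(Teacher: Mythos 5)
Your proposal is correct and follows essentially the same route as the paper: part (b) by standard Davis--Kahan against $\bA^\star$ using the operator norm bound from \Cref{lem:operatorNormA}, part (a) by the vector-form (generalized) Davis--Kahan bound $\|\bP\bu\|_2 \lesssim \|(\bA-\bA^{(-v)})\bu\|_2/\delta$ sharpened via the sparsity of $\bA-\bA^{(-v)}$ (nonzero only in row/column $v$), and part (c) by the triangle inequality. The one place you go slightly beyond the paper's presentation is in making the sign-alignment explicit --- the paper invokes $\|s^{(-v)}\bu^{(-v)} - s\bu\|_2 \le \sqrt{2}\|\bP\bu\|_2$ without noting that this requires $\langle s^{(-v)}\bu^{(-v)}, s\bu\rangle \ge 0$, which is precisely what your hemisphere argument via part (b) and the companion bound on $\|s\bu - \bu^\star\|_2$ secures.
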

    \begin{proof}[Proof of Lemma ~\ref{lem:firstApproximation}]
    First, let $\delta = \min_{i\neq 2}|\lambda_{i}^{(-v)} - \lambda|$ denote the absolute eigen gap, then by generalized Davis-Kahan Lemma ~\ref{lem:generalized_Davis_Kahan} where we take $\rmD = \rmI_{N}$, $\Lambda = \lambda_{2}^{(-v)}$, $\rmU = s^{(-v)}\rvu^{(-v)}$, $\rmP = \rmI_{N} - \rmU\rmU^{\sT}$, $\widehat{\lambda} = \lambda$, $\widehat{\rvu}= s\rvu$, we have
    \begin{align}
        &\, \| s^{(-v)}\rvu^{(-v)} - s\rvu \|_2 \leq \sqrt{2}\|\rmP \rvu\|_2  \leq \frac{\sqrt{2} \|(\rmA^{(-v)}  - \lambda \rmI_{N} )\rvu\|_2}{\delta} = \frac{\sqrt{2} \|(\rmA^{(-v)}  - \rmA )\rvu\|_2}{\delta} \label{eqn:u-v-u_upper}
    \end{align}
    By Weyl's inequality Lemma ~\ref{lem:weyl} and Corollary \ref{cor:eigenvalue_approx_adj}, the following holds with probability at least $1 - O(N^{-10})$,
    \begin{align}
        \delta = &\, \min_{i\neq 2}|\lambda_{i}^{(-v)} - \lambda| = \min \big\{ |\lambda_3^{(-v)} - \lambda|, \,\, |\lambda_1^{(-v)} - \lambda|\big\} \notag \\
        \geq &\, \min \big\{ |\lambda_3 - \lambda|, \,\,|\lambda_1 - \lambda|\big\} - \|\rmA - \rmA^{(-v)}\|\notag\\
        \geq &\, \min \big\{ |\lambda^{\star}_3 - \lambda^{\star}|, \,\,|\lambda^{\star}_1 - \lambda^{\star}|\big\} - 2\|\rmA - \rmA^{\star}\| - \|\rmA - \rmA^{(-v)}\| \gtrsim \rho_{N},\label{eqn:delta_lower}
    \end{align}
    where the last line holds by \Cref{thm:concentration} and Lemma ~\ref{lem:operatorNormA}.
    Let $\rvw = (\rmA - \rmA^{(-v)})\rvu$. Note that $\| (\rmA - \rmA^{(-v)})_{i\cdot}\|_{1} = \sum\limits_{\ell \in \sL} (\ell - 1)\ermA_{iv}$ for $i\neq v$, then
    \begin{align}
        |\ervw_{v}| =&\, |(\rmA \rvu)_{v}| = |\lambda|\cdot |\ervu_{v}| \leq |\lambda| \cdot \|\rvu\|_{\infty},\notag\\
        |\ervw_{i}| = &\, |[(\rmA - \rmA^{(-v)})\rvu]_{i}|\leq \| (\rmA - \rmA^{(-v)})_{i\cdot}\|_{1} \cdot \|\rvu\|_{\infty} = \sum\limits_{\ell \in \sL} (\ell - 1)\ermA_{iv} \cdot \|\rvu\|_{\infty}, \,\, i\neq v.\notag
    \end{align}
    Again by Lemmas \ref{lem:operatorNormA}, \ref{cor:eigenvalue_approx_adj}, with probability at least $1 - O(N^{-10})$,
    \begin{align}
        \|(\rmA - \rmA^{(-v)})\rvu\|_2^2 \leq &\, \bigg( |\lambda|^2 + \sum\limits_{i\neq v} \Big(\sum\limits_{\ell \in \sL} (\ell - 1)\ermA_{iv} \Big)^2 \bigg) \cdot \|\rvu\|_{\infty}^2 \notag \\
        \lesssim &\, \big( |\lambda^{\star}|^2 + \|\rmA\|_{2 \to \infty} ) \cdot  \|\rvu\|_{\infty}^2 \lesssim \rho_{N}^2 \cdot \|\rvu\|_{\infty}^2.\notag
    \end{align}
    Therefore, by plugging in the lower bound of $\delta$ \eqref{eqn:delta_lower} into \eqref{eqn:u-v-u_upper}, we have
    \begin{align}
        \| s^{(-v)}\rvu^{(-v)} - s\rvu \|_2 \lesssim \|\rvu\|_{\infty}.\notag
    \end{align}
For the second part, $s^{(-v)} (\rvu^{(-v)} )^{\sT} \rvu^{\star} \geq 0$ by \eqref{eqn:signsofEigen}. Let $\delta^{\star} = \min \{ |\lambda^{\star}_3 - \lambda^{\star}_2|,|\lambda^{\star}_1 - \lambda^{\star}_2|\} \asymp \rho_{N}$, then by Davis-Kahan Lemma ~\ref{lem:DavisKahan} and Lemma ~\ref{lem:operatorNormA}, for any $v\in \gV$, the following holds with probability at least $1 - O(N^{-10})$,
        \begin{align}
            \|s^{(-v)} \rvu^{(-v)} - \rvu^{\star}\|_2 \leq \frac{4\|\rmA^{(-v)} - \rmA^{\star}\|} {\delta^{\star}} \lesssim \frac{2\sqrt{\rho_{N}} + \frac{\rho_{N}}{\sqrt{N}}}{\rho_{N}} \lesssim \frac{1}{ \sqrt{\rho_{N}}} + \frac{1}{\sqrt{N}},\notag
        \end{align}
Note that $\|\rvx\|_{\infty} \leq \|\rvx\|_2$ for any $\rvx \in \R^{N}$, then
    \begin{align}
        \| s^{(-v)}\rvu^{(-v)} - \rvu^{\star} \|_{\infty} \leq &\, \| s^{(-v)}\rvu^{(-v)} - s\rvu \|_{\infty} + \| \rvu \|_{\infty} + \| \rvu^{\star} \|_{\infty} \notag \\
        \leq &\, \| s^{(-v)}\rvu^{(-v)} - s\rvu \|_2 + \| \rvu \|_{\infty} + \| \rvu^{\star} \|_{\infty} \lesssim \| \rvu \|_{\infty} + \| \rvu^{\star} \|_{\infty}, \notag
    \end{align}
    and the desired result follows since $\| \rvu^{\star}\|_{\infty} = 1/\sqrt{N}$ by definition of our model.
\end{proof}
    
\begin{lemma}\label{lem:secondApproximation}
The following holds with probability at least $1 - O(N^{-3})$,
    \begin{align}
        \max_{v\in \gV} | \rmA_{v:}( s^{(-v)} \rvu^{(-v)} - \rvu^{\star}) | \lesssim \frac{\rho_{N}}{\log(\rho_{N})}\cdot \Big( \| \rvu \|_{\infty} + \frac{1}{\sqrt{N}} \Big).\notag
    \end{align}
\end{lemma}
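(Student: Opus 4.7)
The plan is to split
\[
\bA_{v:}(s^{(-v)}\bu^{(-v)} - \bu^{\star}) = (\bA-\bA^{\star})_{v:}\,\bw + \bA^{\star}_{v:}\,\bw,
\qquad \bw := s^{(-v)}\bu^{(-v)} - \bu^{\star},
\]
and bound the two pieces separately. The whole reason for introducing $\bA^{(-v)}$ is that $\bu^{(-v)}$ (and hence the sign $s^{(-v)} = \sign(\<\bu^{(-v)},\bu^{\star}\>)$) is measurable with respect to $\bA^{(-v)}$, whereas the randomness of $(\bA-\bA^{\star})_{v:}$ comes entirely from hyperedges containing $v$---all of which have been zeroed out in $\bA^{(-v)}$. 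Hence $\bw$ is independent of $(\bA-\bA^{\star})_{v:}$, which is exactly the hypothesis needed to invoke the row-concentration inequality \Cref{lem:rowConcentration} conditionally on $\bA^{(-v)}$.

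For the stochastic piece, I would feed the \Cref{lem:firstApproximation} bounds $\|\bw\|_2 \lesssim 1/\sqrt{\rho_n}$ and $\|\bw\|_{\infty} \lesssim \|\bu\|_{\infty} + 1/\sqrt{n}$ into $|(\bA-\bA^{\star})_{v:}\bw| \leq \varphi(x)\|\bw\|_{\infty}\rho_n$, with $x = \|\bw\|_2/(\sqrt n\|\bw\|_{\infty})$. In the main regime $x \leq 1/e$, the map $y\mapsto y/\log(\sqrt n y/\|\bw\|_2)$ is increasing on $\{\sqrt n y \geq e\|\bw\|_2\}$, so I may replace $\|\bw\|_{\infty}$ by its upper bound $\|\bu\|_{\infty}+1/\sqrt n$ in the quotient; combining the trivial inequality $\sqrt n(\|\bu\|_{\infty}+1/\sqrt n) \geq 1$ with $\|\bw\|_2\lesssim 1/\sqrt{\rho_n}$ gives $1/x\gtrsim \sqrt{\rho_n}$, hence $\varphi(x)\lesssim 1/\log(\rho_n)$, and the stochastic piece is at most $\rho_n(\|\bu\|_{\infty}+1/\sqrt n)/\log(\rho_n)$. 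In the complementary regime $x>1/e$, the trivial bound $\varphi \leq 5+c$ together with $\|\bw\|_{\infty} \leq e\|\bw\|_2/\sqrt n \lesssim 1/\sqrt{\rho_n n}$ yields a bound of order $\sqrt{\rho_n/n}$, which is absorbed into the target $\rho_n/(\sqrt n\log(\rho_n))$ via the slack $\log(\rho_n)\lesssim \sqrt{\rho_n}$ guaranteed by $\rho_n \gtrsim \log n$.

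For the deterministic piece, Cauchy--Schwarz combined with \Cref{lem:operatorNormA} ($\|\bA^{\star}\|_{2\to\infty}\lesssim \rho_n/\sqrt n$) gives
\[
|\bA^{\star}_{v:}\bw| \leq \|\bA^{\star}_{v:}\|_2\,\|\bw\|_2 \lesssim \frac{\rho_n}{\sqrt n}\cdot \frac{1}{\sqrt{\rho_n}} = \sqrt{\rho_n/n},
\]
which is again absorbed by the same slack inequality. A union bound over $v\in[n]$, with $c\geq 1$ chosen in \Cref{lem:rowConcentration}, promotes the per-vertex failure probability $2n^{-3-c}$ to the uniform conclusion holding with probability at least $1 - O(n^{-3})$.

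The main obstacle is handling $\varphi$ without any a priori lower bound on $\|\bu\|_{\infty}$: such a bound is precisely the ultimate payoff of \Cref{prop:entrywiseDiffA}, so it cannot be assumed here. The rescue is the universal inequality $\|\bw\|_2 \leq \sqrt n\|\bw\|_{\infty}$, which keeps $x\leq 1$ and hence $\varphi$ under control; in the main regime, monotonicity lets one upper bound $\|\bw\|_{\infty}$ at the cost of only a logarithmic factor in the denominator, and in the degenerate regime $\|\bw\|_{\infty}$ is forced to be small enough that a constant $\varphi$ suffices, with the comfortable gap $\log(\rho_n)\lesssim \sqrt{\rho_n}$ closing the bookkeeping.
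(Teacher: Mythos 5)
Your proof is correct and follows essentially the same strategy as the paper's: decompose $\bA_{v:}\bw$ into the stochastic term $(\bA-\bA^{\star})_{v:}\bw$ handled by the row-concentration \Cref{lem:rowConcentration} (using both the $\ell_2$ and $\ell_\infty$ bounds on $\bw$ from \Cref{lem:firstApproximation}, with a case split exploiting the monotonicity properties of $\varphi$), plus the deterministic term $\bA^{\star}_{v:}\bw$ controlled by $\|\bA^{\star}\|_{2\to\infty}\|\bw\|_2$, finishing with a union bound. The only cosmetic difference is the location of the case split: you split at the constant $x=1/e$ and argue via the increasing map $y\mapsto y/\log(\sqrt{n}y/\|\bw\|_2)$, whereas the paper splits at a $\rho_n$-dependent pivot $\gamma_n\asymp 1/\sqrt{\rho_n}$ and uses that $\varphi$ is non-decreasing while $\varphi(x)/x$ is non-increasing; both variants yield the same $1/\log(\rho_n)$ gain from $\|\bw\|_2\lesssim 1/\sqrt{\rho_n}$ and absorb the deterministic piece through the slack $\log(\rho_n)\lesssim\sqrt{\rho_n}$.
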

\begin{proof}[Proof of Lemma ~\ref{lem:secondApproximation}]
    By definition of $\rmA^{(-v)}$ in \eqref{eqn:A-v}, the vector $\rvu^{(-v)}$ and the row $\rmA_{v:}$ are independent. Also, $\rvu^{\star}$ is independent of $\rmA$ as well since it is a constant vector. For ease of presentation, we denote $\rvw^{(-v)} \coloneqq s^{(-v)} \rvu^{(-v)} - \rvu^{\star}$. Then $\rvw^{(-v)}$ is independent of $\rmA_{v:}$. By triangle inequality,
        \begin{align}
            |\rmA_{v:} \rvw^{(-v)}|\leq &\, \underbrace{|\rmA_{v:}^{\star} \rvw^{(-v)}|}_{\circled{1}} + \underbrace{|(\rmA_{v:} - \rmA_{v:}^{\star}) \rvw^{(-v)}|}_{\circled{2}}.\notag
        \end{align}
    
    The first term \circled{1} can be bounded by Lemmas  \ref{lem:operatorNormA},\ref{lem:firstApproximation} as follows
    \begin{align}
        \circled{1} \leq \|\rmA^{\star}\|_{2\to \infty} \cdot \|\rvw^{(-v)}\|_2 \leq \frac{\rho_{N}}{\sqrt{N}} \| \rvu \|_{\infty}.\notag
    \end{align}
    The second term \circled{2} can be bounded by row concentration Lemma ~\ref{lem:row_concentration},
    \begin{align}
        |(\rmA_{v:} - \rmA_{v:}^{\star}) \rvw^{(-v)}| \leq \zeta(\rvw, c) = \varphi\Big( \frac{\|\rvw^{(-v)}\|_2}{\sqrt{N} \|\rvw^{(-v)}\|_{\infty} } \Big) \cdot \|\rvw^{(-v)}\|_{\infty} \rho_{N}.\notag
    \end{align}
    By the union bound, the event \eqref{eqn:rowConcentration} holds simultaneously for all $v\in \gV$ with probability at least $1 - O(N^{-2})$. Define 
    \begin{align}\label{eqn:gamman}
        \gamma_{N} = \frac{\const_{\eqref{eqn:concentrateA}}}{\sqrt{\rho_{N}}}.
    \end{align}
    We present the discussions based on the magnitude of $\frac{\|\rvw^{(-v)}\|_2}{\sqrt{N} \|\rvw^{(-v)}\|_{\infty}}$. Recall function $\varphi(x)$ in \eqref{eqn:varphix}.
    \begin{itemize}
        \item $\frac{\|\rvw^{(-v)}\|_2}{\sqrt{N} \|\rvw^{(-v)}\|_{\infty}} \leq \gamma_{N}$. Note that $\varphi(x):\R_{+} \mapsto \R_{+}$ is non-decreasing, then
            \begin{align}\notag
                |(\rmA_{v:} - \rmA_{v:}^{\star}) \rvw^{(-v)}| \leq \varphi(\gamma_{N})\cdot \|\rvw^{(-v)}\|_{\infty}\cdot \rho_{N}.
            \end{align}
        \item $\frac{\|\rvw^{(-v)}\|_2}{\sqrt{N} \|\rvw^{(-v)}\|_{\infty}} > \gamma_{N}$. Note that $\varphi(x)/x:\R_{+} \mapsto \R_{+}$ is non-increasing, then $\varphi(x) \leq x \varphi(\gamma_{N})/\gamma_{N}$ for any $x> \gamma_{N}$, thus
            \begin{align}\notag
                 |(\rmA_{v:} - \rmA_{v:}^{\star}) \rvw^{(-v)}| \leq \frac{\varphi(\gamma_{N})}{\gamma_{N}} \cdot \frac{\|\rvw^{(-v)}\|_2}{\sqrt{N}} \cdot \rho_{N}
            \end{align}
    \end{itemize}
    Combining the bounds above and Lemma ~\ref{lem:firstApproximation}, for each $v\in \gV$, the second term can be bounded as
    \begin{align}
        \circled{2} =&\, |(\rmA_{v:} - \rmA_{v:}^{\star}) \rvw^{(-v)}| \leq \varphi(\gamma_{N}) \rho_{N} \cdot \max\Big\{ \|\rvw^{(-v)}\|_{\infty},\,\,  \frac{\|\rvw^{(-v)}\|_2}{\gamma_{N} \sqrt{N}} \Big\} \notag \\
        \lesssim &\, \varphi(\gamma_{N}) \rho_{N} \cdot \max\Big\{\| \rvu \|_{\infty} + \| \rvu^{\star} \|_{\infty}\,, \,\, \frac{1}{\sqrt{N}} \Big\} \lesssim \frac{\rho_{N}}{\log(\rho_{N})} \cdot ( \| \rvu \|_{\infty} + \| \rvu^{\star} \|_{\infty}), \notag
     \end{align}
    where the last inequality holds since $\sqrt{N}\|\rvu\|_{\infty} \geq \|\rvu\|_2 = 1$ and $\varphi(\gamma_{N}) \asymp [1 \vee \log(\rho_{N})]^{-1}$. Consequently, combining the terms \circled{1} and \circled{2}, we have
    \begin{align}
        \max_{v\in \gV} | \rmA_{v:}( s^{(-v)} \rvu^{(-v)} - \rvu^{\star}) | \lesssim \Big(\frac{\rho_{N}}{\sqrt{N}} +  \frac{\rho_{N}}{\log(\rho_{N})} \Big) \cdot ( \| \rvu \|_{\infty} + \| \rvu^{\star}\|_{\infty}), \notag
    \end{align}
    and the desired result follows since $\| \rvu^{\star}\|_{\infty} = 1/\sqrt{N}$.
    \end{proof}
    
    \begin{lemma}\label{lem:thirdApproximation}
    With probability at least $1 - O(N^{-2})$,
        \begin{align}
           \|\rmA \rvu^{\star}\|_{\infty} \lesssim \frac{\rho_{N}}{\sqrt{N}}. %\quad \|(\rmA - \widetilde{\rmA} )\rvu^{\star}\|_{\infty} \lesssim \frac{\rho_{N}}{\sqrt{N}}.
        \end{align}
    \end{lemma}
    \begin{proof}[Proof of Lemma ~\ref{lem:thirdApproximation}]
    We first prove $\|(\rmA - \rmA^{\star}) \rvu^{\star}\|_{\infty} \lesssim \rho_{N}/\sqrt{N}$ as an intermediate result. By row concentration Lemma ~\ref{lem:row_concentration}, for each $v\in \gV$, with probability at least $1 - O(N^{-3})$,
    \begin{align}
        |(\rmA - \rmA^{\star})_{v:}\rvu^{\star}| \leq \varphi\Big( \frac{\|\rvu^{\star}\|_2}{\sqrt{N} \|\rvu^{\star}\|_{\infty} } \Big) \cdot \|\rvu^{\star}\|_{\infty} \rho_{N}  \lesssim \frac{\rho_{N}}{\sqrt{N}}, \notag
    \end{align}
    where the last equality holds since $\|\rvu^{\star}\|_2 = 1$, $\|\rvu^{\star}\|_{\infty} = 1/\sqrt{N}$ and $\varphi(1) = 5 + c = O(1)$ for some constant $c>0$. Then it follows by the union bound. Note that $\|\rmA^{\star}\|_{2\to \infty} \lesssim \rho_{N}/\sqrt{N}$ by Lemma ~\ref{lem:operatorNormA}, then by triangle inequality, with probability at least $1 - O(N^{-2})$,
    \begin{align}
        \|\rmA \rvu^{\star}\|_{\infty} \leq \|(\rmA - \rmA^{\star})\rvu^{\star}\|_{\infty} + \|\rmA^{\star}\rvu^{\star}\|_{\infty} \lesssim \frac{\rho_{N}}{\sqrt{N}} + \|\rmA^{\star}\|_{2\to \infty} \|\rvu^{\star}\|_{2} \lesssim \frac{\rho_{N}}{\sqrt{N}}. \notag
    \end{align}
    %For the second part, note that $(\rmA - \widetilde{\rmA} )$ is some weighted version of $\rmA$ according to \Cref{eqn:tildeA}, then the result holds up to some constant irrelevant to $N$ by following the same procedure.
    \end{proof}
    
\subsection{Proof of Proposition \ref{prop:entrywise_diff_A}}
\begin{proof}[Proof of Proposition \ref{prop:entrywise_diff_A}] Recall that $\{(\lambda_i, \rvu_i)\}_{i=1}^{|\gV|}$, $\{(\lambda_i^{\star}, \rvu_i^{\star})\}_{i=1}^{|\gV|}$ and $\{(\lambda_i^{(-v)}, \rvu_i^{(-v)})\}_{i=1}^{|\gV|}$ are eigenpairs of $\rmA$, $\rmA^{\star}$ and $\rmA^{(-v)}$ respectively, with the eigenvalues sorted in the decreasing order. We are in particular interested in the second eigenpairs. For notation convenience, we denote $\lambda = \lambda_2$, $\lambda^{\star} = \lambda_2^{\star}$, $\lambda^{(-v)} = \lambda_2^{(-v)}$ and $\rvu = \rvu_2$, $\rvu^{\star} = \rvu_2^{\star}$, $\rvu^{(-v)} = \rvu_2^{(-v)}$. We first prove that the eigenvector $\rvu$ is delocalized, i.e., 
\begin{align}
    \|\rvu\|_{\infty} \lesssim 1/\sqrt{N}. \label{eqn:udelocalized}
\end{align}
Note that $\<\rvu^{\star}, \rvu^{\star}\> = 1 >0$. Define the following sign functions for $\rvu$ and $\rvu^{(-v)}$,
\begin{align}\label{eqn:signsofEigen}
    s = \sign(\<\rvu, \rvu^{\star}\>), \quad s^{(-v)} = \sign(\<\rvu^{(-v)}, \rvu^{\star}\>),
\end{align}
Then $s\rvu^{\sT}\rvu^{\star}\geq 0$ and $s^{(-v)}(\rvu^{(-v)})^{\sT}\rvu^{\star} \geq 0$. Meanwhile $(\lambda^{\star})^{-1} \asymp (\lambda)^{-1}$ by Corollary \ref{cor:eigenvalue_approx_adj}, then
    \begin{align}
        \|\rvu\|_{\infty} = &\, \Big\|s \frac{\rmA \rvu}{\lambda} \Big\|_{\infty} \leq \Big\|\frac{\rmA( s\rvu - \rvu^{\star}) }{\lambda} \Big\|_{\infty} + \Big\|\frac{\rmA \rvu^{\star}}{\lambda} \Big\|_{\infty} \notag \\
        \lesssim &\, |\lambda^{\star}|^{-1} \Big( \max_{v\in \gV} |\rmA_{v:} (s\rvu - \rvu^{\star})| + \|\rmA\rvu^{\star}\|_{\infty} \Big)\notag\\
        \leq &\, |\lambda^{\star}|^{-1} \Big(  \max_{v\in \gV}| \rmA_{v:}(s\rvu - s^{(-v)} \rvu^{(-v)})| +  \max_{v\in \gV} | \rmA_{v:}( s^{(-v)} \rvu^{(-v)} - \rvu^{\star}) | +  \|\rmA \rvu^{\star}\|_{\infty}  \Big) \notag\\
        \leq &\, |\lambda^{\star}|^{-1} \Big(  \|\rmA\|_{2\to \infty}\| s\rvu - s^{(-v)} \rvu^{(-v)}\|_2 +  \max_{v\in \gV} | \rmA_{v:}( s^{(-v)} \rvu^{(-v)} - \rvu^{\star}) | +  \|\rmA \rvu^{\star}\|_{\infty}  \Big) \notag \\
        \lesssim &\, \rho_{N}^{-1} \Big[ (\sqrt{\rho_{N}} + \frac{\rho_{N}}{\sqrt{N}}) \|\rvu\|_{\infty} + \frac{\rho_{N}}{\log(\rho_{N})}\cdot \Big( \| \rvu \|_{\infty} + \frac{1}{\sqrt{N}} \Big)  + \frac{\rho_{N}}{\sqrt{N}} \Big],\notag
    \end{align}
    where the last inequality holds by Lemmas \ref{cor:eigenvalue_approx_adj}, \ref{lem:firstApproximation}, \ref{lem:secondApproximation}, \ref{lem:thirdApproximation}. By rearranging the terms on both sides,
    \begin{align}
        \Big(1 - \frac{1}{\sqrt{\rho_{N}}} - \frac{1}{\sqrt{N}} - \frac{1}{\log(\rho_{N})}\Big) \cdot \|\rvu\|_{\infty} \lesssim \Big(1 + \frac{1}{\log(\rho_{N})} \Big) \frac{1}{\sqrt{N}}.\notag
    \end{align}
    Then \eqref{eqn:udelocalized} follows since $\rho_{N} \gtrsim \log(N)$. Following the same strategy, we have
    \begin{align}
        &\,\Big\|s\rvu - \frac{ \rmA \rvu^{\star}}{\lambda^{\star}} \Big\|_{\infty} = \Big\|s\frac{\rmA \rvu}{\lambda} - \frac{\rmA \rvu^{\star}}{\lambda} + \frac{\rmA \rvu^{\star}}{\lambda} - \frac{\rmA \rvu^{\star}}{\lambda^{\star}} \Big\|_{\infty} \leq \frac{\| \rmA( s\rvu - \rvu^{\star} )\|_{\infty}  }{|\lambda|} + \frac{|\lambda - \lambda^{\star}|}{|\lambda| \cdot |\lambda^{\star}|} \cdot \|\rmA \rvu^{\star}\|_{\infty},\notag
    \end{align}
    where the first term can be bounded as before. For the second term, by Corollary \ref{cor:eigenvalue_approx_adj}, one has
    \begin{align}
        \frac{|\lambda - \lambda^{\star}|}{|\lambda| \cdot |\lambda^{\star}|} \lesssim \frac{\sqrt{\rho_{N}}}{\rho_{N}^2} = (\rho_{N})^{-3/2}.\notag
    \end{align}
    Then it follows that with probability at least $1 - O(N^{-2})$,
    \begin{align}
        \Big\|s\rvu - \frac{ \rmA \rvu^{\star}}{\lambda^{\star}} \Big\|_{\infty} \lesssim &\, \rho_{N}^{-1} \Big[ (\sqrt{\rho_{N}} + \frac{\rho_{N}}{\sqrt{N}}) \|\rvu\|_{\infty} + \frac{\rho_{N}}{\log(\rho_{N})}\cdot \Big( \| \rvu \|_{\infty} + \frac{1}{\sqrt{N}} \Big) \Big] + (\rho_{N})^{-3/2} \frac{\rho_{N}}{\sqrt{N}}\notag\\
        \lesssim &\, \Big( \frac{1}{\sqrt{\rho_{N}}} + \frac{1}{\sqrt{N}} + \frac{2}{\log(\rho_{N})} + \frac{1}{\sqrt{\rho_{N}}} \Big) \cdot \frac{1}{\sqrt{N}} \lesssim \frac{1}{\sqrt{N}\cdot \log(\rho_{N})},\notag
    \end{align}
    where the last inequality holds since the term $[\log(\rho_{N})]^{-1}$ dominates.
\end{proof}

%%%%%%%%%%%%%%%%%%%%%%%%%%%%%%%%%%%%%%%%%%%%%%%%%%%%%%%%%%%%
%%                 Refinement Algorithm                   %%
%%%%%%%%%%%%%%%%%%%%%%%%%%%%%%%%%%%%%%%%%%%%%%%%%%%%%%%%%%%%
\section{Deferred proofs in \Cref{sec:refinement_binary}}\label{app:refinement_binary}
\subsection{Proof of Lemma \ref{lem:subcritical-Ay-error}}
\begin{proof}[Proof of Lemma \ref{lem:subcritical-Ay-error}]
Without loss of generality, we assume $s = 1$. We further define $\widetilde{\rmA}$ as an intermediate matrix between $\widehat{\mA}$ \eqref{eqn:hatA} and $\overline{\mA}$ \eqref{eqn:barA}.  For each $v \in \gV$, we construct the edge set $\widehat{\gE}^{(r)}_{\ell}(v)$ based on the initial partition $\widehat{\rvy}^{(0)}$, as defined in \eqref{eqn:hat_edge_classification}. For each edge $e \in \widehat{\gE}^{(r)}_{\ell}(v)$, we consider its vertices other than $v$. For $j\in e\setminus\{v\}$, define the entry $\widetilde{\etA}^{(\ell)}_{e, v, j} = \etA^{(\ell)}_{e}/(\ell - 1)$ when $r\in \{0, \ell - 1\}$; when $1\leq r \leq \ell - 2$, we let
\begin{align}
    \widetilde{\etA}^{(\ell)}_{e, v, j} =&\, \etA^{(\ell)}_{e}/r,\,\, \textnormal{if }\widehat{\ervy}^{(0)}_v = \widehat{\ervy}^{(0)}_j; \quad \widetilde{\etA}^{(\ell)}_{e, v, j} = \etA^{(\ell)}_{e}/(\ell - 1-r), \,\,\textnormal{if }\widehat{\ervy}^{(0)}_v \neq \widehat{\ervy}^{(0)}_j. \label{eqn:tildeAevj}
\end{align}
With $\psi_{\ell}$ in \eqref{eqn:psi_l}, we define the weighted adjacency matrix $\widetilde{\rmA}$ by
\begin{align}
    \widetilde{\ermA}_{vj} \coloneqq \indi{v \neq j} \cdot \sum_{\ell \in \sL} \psi_{\ell} \sum_{r=0}^{\ell - 1} \sum_{e \in \widehat{\gE}^{(r)}_{\ell}(v) } \widetilde{\etA}^{(\ell)}_{e, v, j}.\label{eqn:tildeA}
\end{align}
Note that $\widetilde{\ermA}_{vj}$ and $\widehat{\ermA}_{vj}$ \eqref{eqn:hatA} differs by $\psi_{\ell}$ and $\widehat{\psi}_{\ell}$ only. Then by triangle inequality, we have
\begin{align}
    \|\widehat{\rmA}\widehat{\rvy}^{(0)} - \overline{\rmA}\rvy\|_{q_{N}} \leq \|\widehat{\rmA}\widehat{\rvy}^{(0)} - \widetilde{\rmA} \widehat{\rvy}^{(0)}\|_{q_{N}} + \|\widetilde{\rmA}\widehat{\rvy}^{(0)} - \overline{\rmA}\rvy\|_{q_{N}}.\notag
\end{align}
We claim the following two bounds hold with probability at least $1 - O(N^{-2})$,
\begin{subequations}
\begin{align}
    &\, \|\widehat{\rmA}\widehat{\rvy}^{(0)} - \widetilde{\rmA} \widehat{\rvy}^{(0)}\|_{q_{N}} \leq N^{1/q_{N}}\sqrt{q_{N}}, \label{eqn:Ay-error-claim1}\\
    &\, \|\widetilde{\rmA}\widehat{\rvy}^{(0)} - \overline{\rmA}\rvy\|_{q_{N}} \leq N^{1/q_{N}}\sqrt{q_{N}}. \label{eqn:Ay-error-claim2}
\end{align}
\end{subequations}
Note that $\lambda_{2}^{\star} \asymp q_{N}$, then the desired result follows by applying the above two claims.
\end{proof}

\begin{proof}[Proof of \eqref{eqn:Ay-error-claim2}]
For each $v\in \gV$, using the cancellation trick in \eqref{eqn:RVdiff}, we have
\begin{align}
    &\,\big[\widetilde{\rmA} \widehat{\rvy}^{(0)} - \overline{\rmA}\rvy\big]_{v} \notag \\
    = &\, \sum_{\ell \in \sL} \psi_{\ell}\cdot \bigg( \widehat{\ervy}_{v}^{(0)}\cdot \Big( \sum_{ e\in \widehat{\gE}^{(0)}_{\ell}(v) } \etA^{(\ell)}_{e} - \sum_{ e\in \widehat{\gE}^{(\ell - 1)}_{\ell}(v) } \etA^{(\ell)}_{e} \Big) - \ervy_{v}\cdot \Big( \sum_{ e\in \gE^{(0)}_{\ell}(v) } \etA^{(\ell)}_{e} - \sum_{ e\in \gE^{(\ell - 1)}_{\ell}(v) } \etA^{(\ell)}_{e} \Big) \bigg) \label{eqn:Wv-difference}
\end{align}
For vertices that are correctly classified, i.e., $v\in \gV$ with $\widehat{\ervy}^{(0)}_{v} = \ervy_{v}$, the following holds,   
\begin{align}
 |\eqref{eqn:Wv-difference}| \leq &\, \bigg|\sum_{\ell \in \sL} \psi_{\ell}\cdot \Big( \sum_{ e\in \widehat{\gE}^{(0)}_{\ell}(v)\setminus \gE^{(0)}_{\ell}(v) } \etA^{(\ell)}_{e}\,\, - \sum_{ e\in \widehat{\gE}^{(\ell - 1)}_{\ell}(v)\setminus \gE^{(\ell - 1)}_{\ell}(v) } \etA^{(\ell)}_{e} \Big) \bigg|\notag\\
\leq &\, \sum_{\ell \in \sL} \psi_{\ell}\sum_{ e\in \widehat{\gE}^{(0)}_{\ell}(v)\setminus \gE^{(0)}_{\ell}(v) } \etA^{(\ell)}_{e}\,\, +  \sum_{\ell \in \sL} \psi_{\ell}\sum_{ e\in \widehat{\gE}^{(\ell - 1)}_{\ell}(v)\setminus \gE^{(\ell - 1)}_{\ell}(v) } \etA^{(\ell)}_{e}\label{eqn:Wv-difference-upperbound}
\end{align}
Let $\widehat{\gN}^{(0)}$ denote the set of misclassified vertices in the initial partition $\widehat{\rvy}^{(0)}$ obtained by spectral method. Note that each edge in $\widehat{\gE}^{(0)}_{\ell}(v)\setminus \gE^{(0)}_{\ell}(v)$ or $\widehat{\gE}^{(\ell - 1)}_{\ell}(v)\setminus \gE^{(\ell - 1)}_{\ell}(v)$ contains at least one misclassified vertex in $\widehat{\gN}^{(0)}$ and any other $\ell - 2$ vertices in $\gV \setminus \{v\}$, then the following holds,
\begin{align}
    |\widehat{\gE}^{(0)}_{\ell}(v)\setminus \gE^{(0)}_{\ell}(v)| \leq |\widehat{\gN}^{(0)}|\cdot \binom{N-2}{\ell - 2}, \quad |\widehat{\gE}^{(\ell - 1)}_{\ell}(v)\setminus \gE^{(\ell - 1)}_{\ell}(v)| \leq |\widehat{\gN}^{(0)}|\cdot \binom{N-2}{\ell - 2}.\notag
\end{align}
For misclassified vertices, i.e., $v\in \gV$ with $\widehat{\ervy}^{(0)}_{v} \neq \ervy_{v}$, the following holds,
\begin{align}
 |\eqref{eqn:Wv-difference}| \leq &\, \bigg|\sum_{\ell \in \sL} \psi_{\ell}\cdot \Big( \sum_{ e\in \widehat{\gE}^{(0)}_{\ell}(v)\setminus \gE^{(\ell - 1)}_{\ell}(v) } \etA^{(\ell)}_{e}\,\, - \sum_{ e\in \widehat{\gE}^{(\ell-1)}_{\ell}(v)\setminus \gE^{(0)}_{\ell}(v) } \etA^{(\ell)}_{e} \Big) \bigg|\notag\\
\leq &\, \sum_{\ell \in \sL} \psi_{\ell}\sum_{ e\in \widehat{\gE}^{(0)}_{\ell}(v)\setminus \gE^{(\ell - 1)}_{\ell}(v) } \etA^{(\ell)}_{e}\,\, +  \sum_{\ell \in \sL} \psi_{\ell}\sum_{ e\in \widehat{\gE}^{(\ell - 1)}_{\ell}(v)\setminus \gE^{(0)}_{\ell}(v) } \etA^{(\ell)}_{e}\notag
\end{align}
Since $\widehat{\ervy}^{(0)}_{v} = - \ervy_{v}$, $\widehat{\gE}^{(0)}_{\ell}(v)$ (resp. $\widehat{\gE}^{(\ell - 1)}_{\ell}(v)$) is essentially an estimate of $\gE^{(\ell - 1)}_{\ell}(v)$ (resp. $\gE^{(0)}_{\ell}(v)$). Note that each edge in $\widehat{\gE}^{(0)}_{\ell}(v)\setminus \gE^{(\ell - 1)}_{\ell}(v)$ or $\widehat{\gE}^{(\ell - 1)}_{\ell}(v)\setminus \gE^{(0)}_{\ell}(v)$ contains at least one misclassified vertex in $\widehat{\gN}^{(0)}$ and any other $\ell - 2$ vertices in $\gV \setminus \{v\}$, then the following holds as well,
\begin{align}
    |\widehat{\gE}^{(0)}_{\ell}(v)\setminus \gE^{(\ell - 1)}_{\ell}(v)| \leq |\widehat{\gN}^{(0)}|\cdot \binom{N-2}{\ell - 2}, \quad |\widehat{\gE}^{(\ell - 1)}_{\ell}(v)\setminus \gE^{(0)}_{\ell}(v)| \leq |\widehat{\gN}^{(0)}|\cdot \binom{N-2}{\ell - 2}.\notag
\end{align}
Consequently, it suffices to bound \eqref{eqn:Wv-difference-upperbound} for each $v\in \gV$. For the first term,  note that hyperedges $\etA^{(\ell)}_{e}$ are independent since different vertices are included in each edge, by using Lemma \ref{lem:lp-moments}, we have
\begin{align}
    &\, \Big( \E \big( \sum_{ e\in \widehat{\gE}^{(0)}_{\ell}(v)\setminus \gE^{(0)}_{\ell}(v) } \etA^{(\ell)}_{e} \big)^{q_{N}} \Big)^{1/q_{N}}\notag\\
    \lesssim &\, \frac{q_{N}}{\log(q_{N})} \cdot \E \Big[\sum_{ e\in \widehat{\gE}^{(0)}_{\ell}(v)\setminus \gE^{(0)}_{\ell}(v) } \etA^{(\ell)}_{e} \Big] \leq \frac{q_{N}}{\log(q_{N})} \cdot |\widehat{\gE}^{(0)}_{\ell}(v)\setminus \gE^{(0)}_{\ell}(v)| \cdot \frac{\alpha_{\ell}q_{N}}{\binom{N-1}{\ell-1}} \notag \\
\lesssim &\, \frac{q_{N}}{\log(q_{N})} \cdot |\widehat{\gN}^{(0)}| \binom{N-2}{\ell -2} \cdot \frac{q_{N}}{\binom{N-1}{\ell-1}} \lesssim \frac{q_{N}^{2}}{\log(q_{N})} \cdot e^{-\frac{1}{2}q_{N}[\D_{\mathrm{AM}} + o(1)]}\,,\notag
\end{align}
where last step holds with probability at least $1 - e^{- \frac{1}{2}q_{N}[\D_{\mathrm{AM}} + o(1)]}$, since $|\widehat{\gN}^{(0)}|$ can be controlled by applying Markov inequality on Theorem \ref{thm:optimality_matrices}, i.e.,
\begin{align}
    \P(|\widehat{\gN}^{(0)}| \geq N e^{- \frac{1}{2}q_{N}[\D_{\mathrm{AM}} + o(1)]}) = \P\big(\eta_{N}(\widehat{\vy}^{(0)}, \vy) \geq e^{- \frac{1}{2}q_{N}[\D_{\mathrm{AM}} + o(1)]} \big) \leq e^{- \frac{1}{2}q_{N}[\D_{\mathrm{AM}} + o(1)]}.\notag 
\end{align}
The second term in \eqref{eqn:Wv-difference-upperbound} can be bounded similarly. Note that $\psi_{\ell} = O(1)$ for each $\ell \in \sL$ by assumption. By applying triangle inequality on results above, the following holds with probability at least $1 - e^{- \frac{1}{2}q_{N}[\D_{\mathrm{AM}} + o(1)]}$
\begin{align}
    \E [\|\widetilde{\rmA} \widehat{\rvy}^{(0)} - \overline{\rmA}\rvy\|_{q_{N}}^{q_{N}}] \lesssim &\, N\cdot e^{q_{N}\cdot(2\log(q_{N}) -\log\log(q_{N}))} \cdot e^{- \frac{1}{2}q_{N}^{2}[\D_{\mathrm{AM}} + o(1)]}.\notag
\end{align}
Then by Markov's inequality, 
\begin{align}
    &\, \P(\|\widetilde{\rmA} \widehat{\rvy}^{(0)} - \overline{\rmA}\rvy\|_{q_{N}} \geq N^{1/q_{N}}\sqrt{q_{N}}) \notag \\
    \leq &\, \frac{\E [\|\widetilde{\rmA} \widehat{\rvy}^{(0)} - \overline{\rmA}\rvy\|_{q_{N}}^{q_{N}}]}{(N^{1/q_{N}}\sqrt{q_{N}})^{q_{N}}} \lesssim \frac{N\cdot e^{q_{N}\cdot(2\log(q_{N}) -\log\log(q_{N}))} \cdot e^{- \frac{1}{2}q_{N}^{2}[\D_{\mathrm{AM}} + o(1)]}}{N \cdot e^{\frac{1}{2}q_{N}\log(q_{N})}} \notag \\
    \lesssim &\, e^{\frac{3}{2}q_{N}\log(q_{N}) - q_{N}^{2}[\D_{\mathrm{AM}} + o(1)]} \lesssim e^{ -\frac{1}{4}q_{N}^{2}(\D_{\mathrm{AM}} + o(1))},\notag
\end{align}
where the last inequality follows since $1\ll q_{N} \lesssim \log(N)$ and $\D_{\mathrm{AM}} \gtrsim 1$, thus proved.
\end{proof}

\begin{proof}[Proof of \eqref{eqn:Ay-error-claim1}]
We present some preliminaries first. Recall \eqref{eqn:alpha_beta_def} and \eqref{eqn:EA_binary}, for each $\ell \in \sL$, we have
    \begin{align}
        &\,\E[\rmA^{(\ell)} ] + \Bigg[\alpha_{\ell} \binom{N/2-2}{\ell - 2} + \beta_{\ell} \Big[ \binom{N-2}{\ell - 2} - \binom{N/2-2}{\ell - 2}\Big] \Bigg] \rmI_{N} \notag \\
        =&\, \frac{1}{2} \bigg[ (\alpha_{\ell} - \beta_{\ell})\binom{N/2-2}{\ell - 2} + 2\beta_{\ell} \binom{N-2}{\ell - 2}\bigg]\ones_{N}\ones_{N}^{\sT} +  \frac{1}{2} \bigg[ (\alpha_{\ell} - \beta_{\ell})\binom{N/2-2}{\ell - 2} \bigg]\rvy \rvy^{\sT}. \notag
    \end{align}
Consequently, for each $\ell \in \sL$, the following holds,
    \begin{align}
        \circled{1} &\, \coloneqq \frac{1}{N}\ones_{N}^{\sT} \E[\rmA^{(\ell)}] \ones_{N} = \frac{1}{2} \bigg[ (N-2)(\alpha_{\ell} - \beta_{\ell})\binom{N/2-2}{\ell - 2} + 2(N-1)\beta_{\ell} \binom{N-2}{\ell - 2}\bigg]\,\,, \notag\\
        \circled{2} &\, \coloneqq \frac{1}{N}\rvy^{\sT} \E[\rmA^{(\ell)} ] \, \rvy = \frac{1}{2} \bigg[ (N-2)(\alpha_{\ell} - \beta_{\ell})\binom{N/2-2}{\ell - 2} - 2\beta_{\ell} \binom{N-2}{\ell - 2}\bigg]\,\,.\notag
    \end{align}
Then $\alpha_{\ell}$ and $\beta_{\ell}$ can be represented in terms of $\circled{1}$ and $\circled{2}$ as follows,
    \begin{align}
        \alpha_{\ell} = \frac{1}{(N-2)\binom{N-2}{\ell - 2}} \Bigg[  \frac{\binom{N-2}{\ell - 2}}{\binom{N/2 - 2}{\ell - 2}} \Big( \circled{1} + \circled{2}\Big) - \bigg[ \frac{\binom{N-2}{\ell - 2}}{\binom{N/2 - 2}{\ell - 2}} - 1 \bigg] \Big(\circled{1} - \circled{2}\Big) \Bigg], \quad \beta_{\ell} = \frac{\circled{1} - \circled{2}}{N\binom{N-2}{\ell - 2}}.\notag
    \end{align}
Similarly, we define
    \begin{align}
       \widehat{\circled{1}} \,\, \coloneqq \frac{1}{N}\ones_{N}^{\sT} \rmA^{(\ell)} \ones_{N}, \quad \widehat{\circled{2}} &\, \coloneqq \frac{1}{N}(\widehat{\rvy}^{(0)})^{\sT} \rmA^{(\ell)} \widehat{\rvy}^{(0)}.\notag
    \end{align}
Then $\widehat{\alpha}_{\ell}$ and $\widehat{\beta}_{\ell}$ can be represented in terms of $\widehat{\circled{1}}$ and $\widehat{\circled{2}}$ as follows,
    \begin{align}
        \widehat{\alpha}_{\ell} = \frac{1}{(N-2)\binom{N-2}{\ell - 2}} \Bigg[  \frac{\binom{N-2}{\ell - 2}}{\binom{N/2 - 2}{\ell - 2}} \Big( \widehat{\circled{1}} + \widehat{\circled{2}}\Big) - \bigg[ \frac{\binom{N-2}{\ell - 2}}{\binom{N/2 - 2}{\ell - 2}} - 1 \bigg] \Big(\widehat{\circled{1}} - \widehat{\circled{2}}\Big) \Bigg], \quad \widehat{\beta}_{\ell} = \frac{\widehat{\circled{1}} - \widehat{\circled{2}}}{N\binom{N-2}{\ell - 2}}.\notag
    \end{align}
Note that $\alpha_{\ell} \ll 1$, $\beta_{\ell} \ll 1$ and $\ell \ll N$ when $N$ is sufficiently large, then $\psi_{\ell} = \log(\alpha_{\ell}/\beta_{\ell}) + o(1)$ and
\begin{align}
\log \Big( \frac{\widehat{\alpha}_{\ell}}{\widehat{\beta}_{\ell}} \Big) = \log\Bigg( 2^{\ell-2} \frac{\widehat{\circled{1}} + \widehat{\circled{2}}}{\widehat{\circled{1}} - \widehat{\circled{2}}} -  2^{\ell-2} + 1\Bigg) = \widehat{\psi}_{\ell}.\notag
\end{align}
where $\psi_{\ell}$ and $\widehat{\psi}_{\ell}$ are defined in \eqref{eqn:psi_l} and \eqref{eqn:psi_hat} respectively.

We now turn to the proof of \eqref{eqn:Ay-error-claim1}. Recall $\widetilde{\rmA}$ in \eqref{eqn:tildeA} and $\widehat{\rmA}$ in \eqref{eqn:hatA}. For each $v\in \gV$, we have
\begin{align}
    [ (\widehat{\rmA}- \widetilde{\rmA})\widehat{\rvy}^{(0)}]_{v} = &\, \sum_{\ell \in \sL} (\psi_{\ell} - \widehat{\psi}_{\ell})\cdot \sum_{r=0}^{\ell - 1} \sum_{e \in \widehat{\gE}^{(r)}_{\ell}(v)} \widetilde{\etA}^{(\ell)}_{e, v, j} \notag.
\end{align}
The problem is now reduced to bounding $|\psi_{\ell} - \widehat{\psi}_{\ell}|$ for each $\ell \in \sL$. Since $\log(1 + x) \leq x$, then
\begin{align}
    |\psi_{\ell} - \widehat{\psi}_{\ell}| = &\, |\log( \alpha/\widehat{\alpha}_{\ell})| + |\log(\widehat{\beta}_{\ell}/\beta_{\ell})| \leq \frac{|\widehat{\alpha}_{\ell} - \alpha_{\ell}|}{\alpha_{\ell}} + \frac{|\widehat{\beta}_{\ell} - \beta_{\ell}|}{\beta_{\ell}}.\notag
\end{align}
We claim that the following holds with probability at least $1 - e^{ -\frac{1}{2}q_{N}^{2}(\D_{\mathrm{AM}} + o(1))}$
\begin{align}
    |\widehat{\circled{1}} - \circled{1}| \lesssim  q_{N}^{-1}, \quad |\widehat{\circled{2}} - \circled{2}| \lesssim q_{N}^{-1}.\label{eqn:circle1_circle2_bounds}
\end{align}
Note that $\circled{1} \asymp q_{N}$ and $\circled{2} \asymp q_{N}$, then $|\psi_{\ell} - \widehat{\psi}_{\ell}| \lesssim q_{N}^{-2}$. Using Lemma \ref{lem:lp-moments}, for each $v\in \gV$,
\begin{align}
    \Big( \E [ (\widehat{\rmA}- \widetilde{\rmA})\widehat{\rvy}^{(0)}]_{v}^{q_{N}} \Big)^{1/q_{N}} \lesssim &\, \frac{1}{q_{N}^{2}}\frac{q_{N}}{\log(q_{N})} \Big| \E[(\widehat{\rmA}- \widetilde{\rmA})\widehat{\rvy}^{(0)}]_{v} \Big| \lesssim \frac{1}{q_{N}^{2}} \frac{q_{N}^{2}}{\log(q_{N})} = \frac{1}{\log(q_{N})}. \notag 
\end{align}
Then by Markov's inequality, 
\begin{align}
    &\, \P(\|(\widehat{\rmA}- \widetilde{\rmA})\widehat{\rvy}^{(0)}\|_{q_{N}} \geq N^{1/q_{N}}\sqrt{q_{N}})\leq \frac{\E [\|(\widehat{\rmA}- \widetilde{\rmA})\widehat{\rvy}^{(0)}\|_{q_{N}}^{q_{N}}]}{(N^{1/q_{N}}\sqrt{q_{N}})^{q_{N}}} \lesssim \frac{N e^{-q_{N}\log\log(q_{N}))}}{Ne^{\frac{1}{2}q_{N}\log(q_{N})}} \lesssim e^{-\frac{1}{3}q_{N}\log(q_{N})},\notag
\end{align}
which completes the proof of \eqref{eqn:Ay-error-claim1}.
\end{proof}

\begin{proof}[Proof of \eqref{eqn:circle1_circle2_bounds}]
For $|\widehat{\circled{1}} - \circled{1}|$, according to Bernstein's inequality \ref{lem:Bernstein}, we have
    \begin{align}
        \P\bigg( \frac{1}{N} \Big| \ones_{N}^{\sT} \big(\rmA^{(\ell)} - \E[\rmA^{(\ell)}] \big)\ones_{N}\Big| \geq q_{N}^{-1} \bigg) \leq &\, 2\exp\bigg( -\frac{N^{2}/(2q_{N}^{2})}{\Var(\ones_{N}^{\sT} \rmA^{(\ell)} \ones_{N}) + (2/3)N/q_{N}} \bigg) \leq 2e^{-N/(2q_{N}^{3})},\notag
    \end{align}
where the last inequality holds since $\ell, \alpha_{\ell}, \beta_{\ell}$ are constants and
\begin{align}
    \Var(\ones_{N}^{\sT} \rmA^{(\ell)} \ones_{N}) \lesssim &\, (\ell - 1)^{2} \sum_{e\in \gE^{(\ell)}} \Var(\etA^{(\ell)}_{e}) \lesssim \binom{N}{\ell} \frac{\alpha_{\ell} q_{N}}{\binom{N-1}{\ell - 1}} \cdot \bigg(1 - \frac{\alpha_{\ell} q_{N}}{\binom{N-1}{\ell - 1}}\bigg) \lesssim N q_{N}\,.\notag
\end{align}

For $|\widehat{\circled{2}} - \circled{2}|$, we have the following decomposition,
\begin{align}
    (\widehat{\rvy}^{(0)})^{\sT} \rmA^{(\ell)} \widehat{\rvy}^{(0)} - \rvy^{\sT} \E[\rmA^{(\ell)}] \rvy = &\, (\widehat{\rvy}^{(0)} + \rvy)^{\sT} \rmA^{(\ell)} (\widehat{\rvy}^{(0)} - \rvy) + \rvy^{\sT} (\rmA^{(\ell)} - \E[\rmA^{(\ell)}]) \rvy.\notag
\end{align}
Following similar steps as above, the second term can be bounded using Bernstein's inequality,
\begin{align}
    \P\bigg( \frac{1}{N} \Big| \rvy^{\sT} (\rmA^{(\ell)} - \E[\rmA^{(\ell)}]) \rvy \Big| \geq q_{N}^{-1} \bigg) \leq &\, 2\exp\bigg( -\frac{N^{2}/(2q_{N}^{2})}{\Var(\rvy^{\sT} \rmA^{(\ell)} \rvy) + (2/3)N/q_{N}} \bigg) \leq 2e^{-N/(2q_{N}^{3})}.\notag
\end{align}
For the first term,  note that $\widehat{\ervy}_{j}^{(0)} , \ervy_{j} \in \{\pm 1\}$ for each $j\in [N]$, then $\|\widehat{\rvy}^{(0)} + \rvy\|_{2} \leq 2\sqrt{N}$. Since $\|\rvy\|_{r} \leq N^{1/r - 1/p} \|\rvy\|_{p}$ for any $p \geq r \geq 1$ due to H\H{o}lder's inequality, then we have
\begin{align}
    &\, \frac{1}{N}|(\widehat{\rvy}^{(0)} + \rvy)^{\sT} \rmA^{(\ell)} (\widehat{\rvy}^{(0)} - \rvy)| \leq \frac{1}{N} (\| \widehat{\rvy}^{(0)}\|_{2} + \|\rvy\|_{2}) \cdot \| \rmA^{(\ell)} (\widehat{\rvy}^{(0)} - \rvy)\|_{2} \notag \\
    \leq &\, \frac{1}{\sqrt{N}} \cdot 2\sqrt{N} \cdot N^{1/2 - 1/q_{N}} \cdot \| \rmA^{(\ell)} (\widehat{\rvy}^{(0)} - \rvy)\|_{q_{N}} \notag \leq 2 N^{- 1/q_{N}} \cdot N^{1/q_{N}}q_{N}^{-1}\leq 2q_{N}^{-1},\notag
\end{align}
where in the last line, we applied the fact that the following holds with probability $1 - O(N^{-2})$,
\begin{align}
    \| \rmA^{(\ell)} (\widehat{\rvy}^{(0)} - \rvy)\|_{q_{N}} \leq N^{1/q_{N}}q_{N}^{-1}, \label{eqn:Ayhatminusy}
\end{align}
For the proof of \eqref{eqn:Ayhatminusy}, note that for each $v\in \gV$, we have
\begin{align}
    &\, \Big| [\rmA^{(\ell)}(\widehat{\rvy}^{(0)} - \rvy)]_{v}  \Big| \leq \sum_{j\in [N]} \indi{\widehat{\ervy}_{j}^{(0)} \neq \ervy_{j} }\cdot 2 \sum_{e\in \gE_{\ell},\,e \supset \{v, j\} } \etA^{(\ell)}_{e} \leq 4 \sum_{r = 0}^{\ell - 1}\sum_{e\in \widehat{\gE}_{\ell}^{(r)}(v)\setminus \gE_{\ell}^{(r)}(v)} \etA^{(\ell)}_{e}.\notag
\end{align}
Then by using Lemma \ref{lem:lp-moments}, for $r\in \{0, \ldots,\ell - 1\}$ and $v\in \gV$,
\begin{align}
    \Big( \E \big( \sum_{ e\in \widehat{\gE}^{(r)}_{\ell}(v)\setminus \gE^{(r)}_{\ell}(v) } \etA^{(\ell)}_{e} \big)^{q_{N}} \Big)^{1/q_{N}} \lesssim &\, \frac{q_{N}}{\log(q_{N})} \cdot \E \Big[\sum_{ e\in \widehat{\gE}^{(r)}_{\ell}(v)\setminus \gE^{(r)}_{\ell}(v) } \etA^{(\ell)}_{e} \Big] \leq \frac{q_{N}}{\log(q_{N})} \cdot |\widehat{\gE}^{(r)}_{\ell}(v)\setminus \gE^{(r)}_{\ell}(v)| \cdot \frac{\alpha_{\ell}q_{N}}{\binom{N-1}{\ell-1}} \notag \\
\lesssim &\, \frac{q_{N}}{\log(q_{N})} \cdot |\widehat{\gN}^{(0)}| \binom{N-2}{\ell -2} \cdot \frac{q_{N}}{\binom{N-1}{\ell-1}} \lesssim \frac{q_{N}^{2}}{\log(q_{N})} \cdot e^{-\frac{1}{2}q_{N}[\D_{\mathrm{AM}} + o(1)]}\,,\notag
\end{align}
where in the last line, $|\widehat{\gN}^{(0)}| \leq N e^{-\frac{1}{2}q_{N}[\D_{\mathrm{AM}} + o(1)]}$ holds with probability at least $1 - e^{- \frac{1}{2}q_{N}[\D_{\mathrm{AM}} + o(1)]}$, as proved in the proof of \eqref{eqn:Ay-error-claim2}. Then by Markov's inequality, 
\begin{align}
    &\, \P(\|\rmA^{(\ell)}(\widehat{\rvy}^{(0)} - \rvy)\|_{q_{N}} \geq N^{1/q_{N}}q_{N}^{-1}) \notag\\
    \leq &\, \frac{\E [\|\widetilde{\rmA} \widehat{\rvy}^{(0)} - \overline{\rmA}\rvy\|_{q_{N}}^{q_{N}}]}{N q_{N}^{-q_{N}}} \lesssim e^{q_{N}\cdot(3\log(q_{N}) -\log\log(q_{N}))} \cdot e^{- \frac{1}{2}q_{N}^{2}[\D_{\mathrm{AM}} + o(1)]} \lesssim e^{ -\frac{1}{4}q_{N}^{2}(\D_{\mathrm{AM}} + o(1))},\notag
\end{align}
where the last inequality follows since $1\ll q_{N} \lesssim \log(N)$ and $\D_{\mathrm{AM}} \gtrsim 1$.
\end{proof}

\subsection{Proof of Lemma \ref{lem:LDP-v-error}}
\begin{proof}[Proof of Lemma \ref{lem:LDP-v-error}]
    Without loss of generality, we assume $s = 1$. Let $\sM = \{ j\in [N]: \sign(\evv_{j}) \neq \sign(\overline{\evv}_{j}) \}$ denote the set of indices having different signs between $\sign(\vv)$ and $\sign(\overline{\vv})$. Then by definition,
    \begin{align}
        \eta_{N}(\sign(\vv), \sign(\overline{\vv})) = N^{-1}\cdot |\sM|.\notag
    \end{align}
Denote $\vr\coloneqq \vv - \vw - \overline{\vv}$. For each $j\in \sM$, the following holds,
\begin{align}
    \evv_{j} \cdot \overline{\evv}_{j} \leq 0 \implies -(\evv_{j} - \overline{\evv}_{j})\cdot \sign(\overline{\evv}_{j}) \geq |\overline{\evv}_{j}| \iff -( \evw_{j} + \evr_{j})\cdot \sign(\overline{\evv}_{j}) \geq |\overline{\evv}_{j}|.\notag
\end{align}
Furthermore, for any $\epsilon \in (0, 1)$, we have
\begin{align}
    &\, \{j\in [N]: \,\, - \evr_{j}\cdot \sign(\overline{\evv}_{j}) < \epsilon |\overline{\evv}_{j}| \textnormal{  and  } -\evw_{j} \cdot \sign(\overline{\evv}_{j}) < (1 - \epsilon) |\overline{\evv}_{j}|\} \notag \\
    \subseteq &\, \{j\in [N]:\,\, -( \evw_{j} + \evr_{j})\cdot \sign(\overline{\evv}_{j}) <|\overline{\evv}_{j}|\}.\notag 
\end{align}
Therefore the following inclusion holds,
\begin{align}
    \sM \subseteq &\, \{j\in [N]: -(\evv_{j} - \overline{\evv}_{j})\cdot \sign(\overline{\evv}_{j}) \geq |\overline{\evv}_{j}|\} =  \{j\in [N]: -( \evw_{j} + \evr_{j})\cdot \sign(\overline{\evv}_{j}) \geq |\overline{\evv}_{j}|\} \notag \\
    \subseteq &\, \{j\in [N]: - \evr_{j}\cdot \sign(\overline{\evv}_{j}) \geq \epsilon |\overline{\evv}_{j}|\} \cup \{ j\in [N]: -\evw_{j} \cdot \sign(\overline{\evv}_{j}) \geq (1 - \epsilon) |\overline{\evv}_{j}|\} \notag\\
    \subseteq &\, \{j\in [N]: |\evr_{j}| \geq \epsilon |\overline{\evv}_{j}|\} \cup \{j\in [N]: -\evw_{j} \cdot \sign(\overline{\evv}_{j}) \geq (1 - \epsilon) |\overline{\evv}_{j}|\}.\notag
\end{align}
For any $\epsilon \in (0, 1)$, define the indicator variable $\rR_{j}(\epsilon) \coloneqq \mathds{1}\{|\evr_{j}| \geq \epsilon |\overline{\evv}_{j}|\}$ for each $j\in [N]$. Then a simple union bound implies that
\begin{align}
    N^{-1}\E |\sM| \leq &\, \frac{1}{N}\sum_{j=1}^{N} \E\rR_{j}(\epsilon) + \frac{1}{N}\sum_{j=1}^{N} \E \rT_{j}(\epsilon).\notag
\end{align}
We define the indicator variable $\rR(\epsilon) \coloneqq \mathds{1}\{\|\vr\|_{p} < \epsilon^{2}N^{1/p} \delta_{N}\}$. By assumption, there exists some constant $C>0$ such that $\P(\rR(\epsilon) = 0) \leq C e^{-p/\epsilon}$. Then on the event $\rR(\epsilon) = 1$, we have
\begin{align}
    \frac{1}{N}\sum_{j=1}^{N} \rR_{j}(\epsilon) \leq \frac{1}{N}\sum_{j=1}^{N} \frac{|\evr_{j}|^{p}}{(\epsilon \delta_{N})^{p}} = \frac{\|\vr\|_{p}^{p}}{N (\epsilon \delta_{N})^{p}} \leq \frac{(\epsilon^{2} N^{1/p} \delta_{N})^{p}}{N(\epsilon \delta_{N})^{p}} = \epsilon^{p}.\notag
\end{align}
By taking expectation on both sides, we have
\begin{align}
    \frac{1}{N}\sum_{j=1}^{N} \E\rR_{j}(\epsilon) \leq &\, \epsilon^{p}\cdot \P(\rR(\epsilon) = 1) + 1\cdot \P(\rR(\epsilon) = 0) \leq \epsilon^{p} + C e^{-p/\epsilon} \leq (C + 1)e^{-p\log(1/\epsilon)},\notag
\end{align}
where the last inequality holds $-\log(\epsilon) \leq 1/\epsilon$ for any $\epsilon \in (0, 1)$. Therefore, we have
\begin{align}
    \log\Big(\E N^{-1} |\sM| \Big) \leq &\, \log\bigg( (C + 1)e^{-p\log(1/\epsilon)} + \frac{1}{N}\sum_{j=1}^{N} \P \big( \rT_{j}(\epsilon) = 1 \big) \bigg).\notag\\
    \leq &\, \log(2(C + 1)) + \max\bigg\{ -p\log(1/\epsilon),\,\, \log\bigg(\frac{1}{N}\sum_{j=1}^{N} \P \big( \rT_{j}(\epsilon) = 1 \big)\bigg) \bigg\}.\notag
\end{align}
By dividing $p$ and taking $\limsup$ on both sides, the desired result follows if we further let $\epsilon \to 0$.
\end{proof}

%%%%%%%%%%%%%%%%%%%%%%%%%%%%%%%%%%%%%%%%%%%%%%%%%%%%%%%%%%%%
%%             Normalized graph Laplacian                 %%
%%%%%%%%%%%%%%%%%%%%%%%%%%%%%%%%%%%%%%%%%%%%%%%%%%%%%%%%%%%%
%\newpage
\section{Analysis of spectral method via normalized graph Laplacian}\label{app:achievabilityLap}
This section is devoted to the performance analysis of Algorithm \ref{alg:spectral_partition_binary} when the input is normalized graph Laplacian $\rmL = \rmD^{-1/2} \rmA \rmD^{-1/2}$. We will show that the spectral method can achieve the same performance as the adjacency matrix $\rmA$ in \Cref{app:achievability_matrices_binary}. For the sake of simplicity, we only outline the differences in the proofs.

%The refinement Algorithm \ref{alg:refinement_voting_binary} accepts any partition with a vanishing mismatch ratio. %Therefore, it suffices to prove a parallel version of \Cref{thm:achievability_matrices}, as well as \Cref{thm:achievability_matrices} for 
\subsection{Generalized eigenvalue problem}
We define $\rD_{v} = \sum\limits_{j = 1}^{|\gV|}\ermA_{vj}$ for each $v\in \gV$, and the diagonal matrix $\rmD = \mathrm{diag}\{ \rD_{1}, \ldots, \rD_{N}\}$. By definition, $\rD_{v} \geq 0$ for each $v\in \gV$. We say $(\lambda, \rvu)$ with $\|\rvu\|_2 = 1$ is an eigenpair of $(\rmA, \rmD)$ if 
\begin{align}\label{eqn:generaleigenpair}
    \rmA \rvu = \lambda \rmD \rvu.
\end{align}
With a slightly abuse of notation, let $\rmD^{-1}$ denote the \emph{Moore-Penrose} inverse of $\rmD$, that is, $(\ermD^{-1})_{vv} = (\rD_{v})^{-1}$ if $\rD_{v} \neq 0$, otherwise $(\ermD^{-1})_{vv} = 0$, which happens only if $\ermA_{vj} = 0$ for each $j\in \gV$.

The normalized Laplacian is defined as $\rmL = \rmD^{-1/2}\rmA\rmD^{-1/2}$, where similarly, $(\rmD^{-1/2})_{vv} = (\rD_{v})^{-1/2}$ if $\rD_{v} \neq 0$, otherwise $(\rmD^{-1/2})_{vv} = 0$. Meanwhile,
\begin{align}
    \rmA \rvu = \lambda \rmD \rvu \iff \rmD^{-1/2}\rmA\rmD^{-1/2} \, \rmD^{1/2}\rvu = \lambda \rmD^{1/2} \rvu \iff \rmL\rmD^{1/2}\rvu = \lambda \rmD^{1/2} \rvu,\notag
\end{align}
where $\rmD^{1/2} \rvu$ and $\rvu$ share the same sign entrywisely, then it suffices to study the generalized eigenvalue problem \eqref{eqn:generaleigenpair} if one is interested in the spectral properties of graph Laplacian $\rmL$.

Analogously, following the ideas in \Cref{app:achievability_matrices_binary}, the expected and the leave-one-out versions are
\begin{align}
    &\, \rD^{\star}_{vv} = \sum\limits_{\rJ = 1}^{|\gV|} A^{\star}_{vj}, \quad \rmD^{\star} = \mathrm{diag}(\rD^{\star}_{11}, \ldots, \rD^{\star}_{nn}), \quad \rmL^{\star} = (\rmD^{\star})^{-1/2}\rmA^{\star}(\rmD^{\star})^{-1/2},\notag\\
    &\, \rD^{(-v)}_{uu} = \sum\limits_{\rJ = 1}^{|\gV|} A^{(-v)}_{uj}, \,\, \rmD^{(-v)} = \mathrm{diag}(\rD^{(-v)}_{11}, \ldots, \rD^{(-v)}_{nn}), \,\, \rmL^{(-v)} = (\rmD^{(-v)})^{-1/2}\rmA^{(-v)}(\rmD^{(-v)})^{-1/2}.\notag
\end{align}

In this section, let $\{(\lambda_i, \rvu_i)\}_{i=1}^{|\gV|}$, $\{(\lambda_i^{\star}, \rvu_i^{\star})\}_{i=1}^{|\gV|}$ and $\{(\lambda_i^{(-v)}, \rvu_i^{(-v)})\}_{i=1}^{|\gV|}$ denote the eigenpairs of the following generalized eigenvalue problems, respectively
\begin{align}
    \rmA \rvu = \lambda \rmD \rvu, \quad \rmA^{\star} \rvu^{\star} = \lambda \rmD^{\star} \rvu, \quad \rmA^{(-v)} \rvu = \lambda \rmD^{(-v)} \rvu, \notag
\end{align}
where the eigenvalues are sorted in the decreasing order. We then introduce the following analogous results.
\begin{lemma}\label{lem:Dmin}
   Suppose that $\D_{\mathrm{AM}} = 1 + \delta$ for some constant $\delta >0$, then $\rD_{v} \geq \epsilon \rho_{N}$ for each $v\in \gV$ with probability at least $1 - e^{-(\D_{\mathrm{AM}} + \delta/2)q_{N}}$ for some constant $\epsilon >0$ depending on $\delta$, $\alpha$ and $\beta$ in \eqref{eqn:alpha_beta_def}. Furthermore, $\rD_{\min} = \min_{v\in \gV}\rD_{v} \gtrsim \rho_{N}$.
\end{lemma}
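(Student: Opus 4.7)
The plan is to apply a Chernoff-type large-deviation bound to $D_{vv}$ for a fixed vertex $v$, yielding a per-vertex tail estimate, and then conclude the ``furthermore'' clause by a union bound. First, by \eqref{eqn:adjacencyMatrixEntry} one can rewrite the degree as
\begin{align*}
D_{vv} \;=\; \sum_{j\neq v}A_{vj} \;=\; \sum_{m\in \cM}(m-1)\sum_{\substack{e\in[V]^m\\ e\ni v}}\cA^{(m)}_e,
\end{align*}
since each $m$-hyperedge through $v$ contributes to exactly $m-1$ of the pairs $\{v,j\}\subset e$. Hence $D_{vv}$ is a weighted sum of independent Bernoulli indicators with bounded weights in $\{1,\dots,M-1\}$ and mean $\E D_{vv}=\rho_n$.

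Next, exponential Markov gives, for any $s>0$,
\begin{align*}
\P\bigl(D_{vv}\leq \epsilon\rho_n\bigr)\;\leq\; \exp\!\Bigl(s\epsilon\rho_n \;-\; (1+o(1))\log(n)\cdot G(s)\Bigr),
\end{align*}
where $G(s)=\sum_{m\in\cM}(1-e^{-s(m-1)})\bigl[a_m/2^{m-1}+b_m(1-2^{-(m-1)})\bigr]$ captures the limiting cumulant rate. As $s\to\infty$, $G(s)\uparrow K := \sum_{m\in \cM}\bigl[a_m/2^{m-1}+b_m(1-2^{-(m-1)})\bigr]$, and a termwise algebraic comparison
\begin{align*}
\tfrac{a_m}{2^{m-1}}+b_m\bigl(1-2^{-(m-1)}\bigr)\;\geq\;\tfrac{a_m+b_m}{2^{m-1}}\;\geq\;\tfrac{(\sqrt{a_m}-\sqrt{b_m})^2}{2^{m-1}}
\end{align*}
yields $K\geq \D_{\mathrm{GH}}$.

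Choosing $s=s_0$ large enough that $G(s_0)\geq K-\delta/4$, and then $\epsilon=\epsilon(\delta,p,q)>0$ small enough that $s_0\epsilon(\rho_n/\log n)\leq \delta/4$ (this is possible because $\rho_n/\log n$ is a positive constant depending only on $p,q$ via \eqref{eqn:pq} under the regime \eqref{eqn:exactRevoceryRegimeBinary}), the Chernoff exponent exceeds $(K-\delta/2)\log n$, so
\begin{align*}
\P\bigl(D_{vv}\leq \epsilon\rho_n\bigr)\;\leq\; n^{-K+\delta/2}\;\leq\; n^{-\D_{\mathrm{GH}}+\delta/2}.
\end{align*}
A union bound over $v\in[n]$ then yields $D_{\min}\geq \epsilon\rho_n$ with probability at least $1-n\cdot n^{-\D_{\mathrm{GH}}+\delta/2}=1-n^{-\delta/2}$, giving $D_{\min}\gtrsim \rho_n$.

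The main obstacle is the \emph{simultaneous} calibration of $s_0\to\infty$ and $\epsilon\to 0$ so that the Chernoff exponent lands precisely at $\D_{\mathrm{GH}}-\delta/2$ rather than at a weaker polynomial rate; in particular this cannot be extracted directly from \Cref{lem:ldpBinomial}, because here all effective weights $c_i^{(m)}=-(m-1)$ have the same sign, violating the positivity hypothesis $\sum c_i^{(m)}\alpha_i^{(m)}\rho_i^{(m)}>0$ of that lemma, so a bare-hands large-deviation argument as above is required.
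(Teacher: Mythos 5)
Your proposal is correct, but it takes a genuinely different route from the paper. The paper's own proof is a one-liner: since $D_{vv}$ dominates (up to a bounded constant factor depending on $\cM$ and $\log(p_m/q_m)$) the positive part of the signed weighted sum $\sqrt{n}\,\sigma_v(\widetilde{\bA}\bu^\star_2)_v$, the event $\{D_{vv}<\epsilon\rho_n\}$ deterministically implies $\{\sqrt{n}\,\sigma_v(\widetilde{\bA}\bu^\star_2)_v \le \delta\rho_n\}$ once $\epsilon$ is taken small enough relative to $\delta$ and $\sum_{m}\log(p_m/q_m)/(m-1)$; the probability bound then follows immediately from \Cref{lem:achieveabilityUpperProb}. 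You instead re-derive a Chernoff bound on $D_{vv}$ from scratch, with the termwise inequality $a_m/2^{m-1}+b_m(1-2^{-(m-1)})\ge (\sqrt{a_m}-\sqrt{b_m})^2/2^{m-1}$ showing $K\ge\D_{\mathrm{GH}}$, and a two-stage calibration of $s_0$ and $\epsilon$. Your observation that \Cref{lem:ldpBinomial} cannot be applied directly to $D_{vv}$ (all weights $m-1>0$, so hypothesis (1) fails) is accurate, but the paper never tries to: it instead reuses the already-proved tail bound for the signed sum, which does satisfy the sign hypotheses of \Cref{lem:ldpBinomial}, and transfers it by the deterministic containment. Net: the paper's route is shorter and reuses prior lemmas; yours is more self-contained and makes the constant-tracking explicit. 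Both are valid.
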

\begin{proof}[Proof of Lemma ~\ref{lem:Dmin}]
    The proof follows similarly as the proof for Lemma ~\ref{lem:LDP_AM}. For some $\epsilon >0$, $\P(\rD_{v} < \epsilon \rho_{N}) \leq \P(\sqrt{N} \ervy_{v}(\rmA \rvu^{\star}_2)_v \leq \delta \rho_{N}) \leq e^{-(\D_{\mathrm{AM}} + \delta/2)q_{N}}$ when $\delta = \D_{\mathrm{AM}} - 1 > 0$.
\end{proof}

\begin{corollary}\label{cor:concentrateNL}
    Following the conventions in \Cref{thm:concentration}, suppose that $\D_{\mathrm{AM}} = 1 + \delta$ for some constant $\delta >0$, then with probability at least $1 - e^{-(\D_{\mathrm{AM}} + \delta/2)q_{N}}$,
\begin{align}
    \|\rmD^{-1}\rmA - (\rmD^{\star})^{-1} \rmA^{\star} \| \leq \const_{\eqref{cor:concentrateNL}}/ \sqrt{\rho_{N}}. \notag %\|\rmL - \rmL^{\star} \| \leq \const_{\eqref{cor:concentrateNL}}/ \sqrt{\rho_{N}}, \quad 
\end{align}
\end{corollary}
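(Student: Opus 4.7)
The natural starting point is the telescoping decomposition
\begin{align*}
    \bD^{-1}\bA - (\bD^{\star})^{-1}\bA^{\star} = \bD^{-1}(\bA-\bA^{\star}) \,+\, \bigl(\bD^{-1}-(\bD^{\star})^{-1}\bigr)\bA^{\star},
\end{align*}
which separates the error into contributions from adjacency concentration and from degree concentration. I would bound each summand in operator norm by submultiplicativity, relying on two ingredients already at hand: the bulk concentration $\|\bA - \bA^{\star}\| \lesssim \sqrt{\rho_n}$ from \Cref{thm:concentration}, and the minimum-degree lower bound $D_{\min} \gtrsim \rho_n$ from \Cref{lem:Dmin}, with the latter supplying the error probability $n^{-\D_{\mathrm{GH}}+\delta/2}$ that appears in the statement.

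For the first summand, $\|\bD^{-1}(\bA-\bA^{\star})\| \leq \|\bD^{-1}\|\,\|\bA-\bA^{\star}\| \lesssim \rho_n^{-1}\cdot\sqrt{\rho_n} = 1/\sqrt{\rho_n}$, which is already of the target order. For the second summand, I would apply the resolvent identity $\bD^{-1} - (\bD^{\star})^{-1} = \bD^{-1}(\bD^{\star}-\bD)(\bD^{\star})^{-1}$ and factor
\begin{align*}
    \bigl\|(\bD^{-1}-(\bD^{\star})^{-1})\bA^{\star}\bigr\| \leq \|\bD^{-1}\|\,\|\bD^{\star}-\bD\|\,\|(\bD^{\star})^{-1}\bA^{\star}\|.
\end{align*}
A convenient feature of the symmetric model \ref{def:nonUniformHSBMBinary} is that \eqref{eqn:EA}--\eqref{eqn:rhon} give $\bD^{\star} = \rho_n \bI_n$ exactly, so $(\bD^{\star})^{-1}\bA^{\star} = \bA^{\star}/\rho_n$ has operator norm exactly $\lambda^{\star}_1/\rho_n = 1$. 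Combined with $\|\bD^{-1}\|\lesssim 1/\rho_n$ from \Cref{lem:Dmin}, the second summand reduces to controlling $\rho_n^{-1}\cdot\|\bD^{\star}-\bD\|$.

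The main obstacle is thus controlling $\|\bD-\bD^{\star}\| = \max_v |D_{vv}-D^{\star}_{vv}|$. Since $D_{vv} = \sum_{m\in\cM}(m-1)|\{e\in E_m : v\in e\}|$ is a bounded-coefficient sum of independent Bernoulli hyperedge indicators with total expectation $\rho_n$, scalar Bernstein yields $|D_{vv}-D^{\star}_{vv}| \lesssim \sqrt{\rho_n\log n}$ with probability $1-O(n^{-c})$ for any fixed $c$, and a union bound over $v\in[n]$ preserves this for the maximum. The second summand is therefore $\lesssim \sqrt{\log n/\rho_n}$, which in the regime $\rho_n\gtrsim\log n$ is $O(1/\sqrt{\rho_n})$ up to a polylogarithmic factor absorbed into the constant $\const_{\eqref{cor:concentrateNL}}$. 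A final union bound over the three events---\Cref{thm:concentration}, \Cref{lem:Dmin}, and the Bernstein bounds on each $D_{vv}$---completes the proof, with $n^{-\D_{\mathrm{GH}}+\delta/2}$ dominating the failure probability.
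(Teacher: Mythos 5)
Your telescoping decomposition and the bound on the first summand $\|\bD^{-1}(\bA-\bA^{\star})\| \leq \|\bD^{-1}\|\,\|\bA-\bA^{\star}\| \lesssim 1/\sqrt{\rho_n}$ match the paper exactly. The treatment of the second summand, however, has a genuine gap. Your route is to factor via the resolvent identity and submultiplicativity, which forces you to control the operator norm of the diagonal matrix $\bD-\bD^{\star}$, i.e.\ $\max_v |D_{vv}-D^{\star}_{vv}|$. Because each $D_{vv}$ is roughly Poisson with mean $\rho_n$, the union bound over $n$ vertices genuinely costs a factor of $\sqrt{\log n}$: the max deviation is of order $\sqrt{\rho_n\log n}$, not $\sqrt{\rho_n}$. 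Your closing claim that $\sqrt{\log n/\rho_n}$ ``is $O(1/\sqrt{\rho_n})$ up to a polylogarithmic factor absorbed into the constant'' is incorrect --- $\sqrt{\log n}\to\infty$, so it cannot be absorbed into an absolute constant $\const_{\eqref{cor:concentrateNL}}$. As written, your argument proves only the weaker bound $\|\bD^{-1}\bA-(\bD^{\star})^{-1}\bA^{\star}\|\lesssim\sqrt{\log n/\rho_n}$.

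The paper avoids this loss by not reducing to the max degree deviation. Instead it bounds the second summand by its Frobenius norm, and observes that the relevant aggregate is
\begin{align*}
\sum_{i=1}^n (D_{ii}-D^{\star}_{ii})^2 \;=\; \|(\bA-\bA^{\star})\ones\|_2^2 \;\leq\; n\,\|\bA-\bA^{\star}\|^2 \;\lesssim\; n\rho_n ,
\end{align*}
which follows directly from \Cref{thm:concentration}, with no union bound over vertices and hence no extra $\log n$. Combining with the entrywise bound $|A^{\star}_{ij}|\leq p$ and $\|\bD^{-1}\|\lesssim 1/\rho_n$, $D^{\star}_{ii}=\rho_n$ gives
\begin{align*}
\|(\bD^{-1}-(\bD^{\star})^{-1})\bA^{\star}\|_{\frob}^2 \;\lesssim\; \frac{np^2}{D_{\min}^2(D^{\star}_{ii})^2}\sum_i (D_{ii}-D^{\star}_{ii})^2 \;\lesssim\; \frac{n^2p^2}{\rho_n^4}\cdot\rho_n \;\asymp\; \frac{1}{\rho_n},
\end{align*}
using $np\asymp\rho_n$, which yields exactly the stated $1/\sqrt{\rho_n}$. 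The moral is that the $\ell^2$-sum of degree deviations is $\Theta(n\rho_n)$, a full $\log n$ cheaper than $n$ times the squared max deviation; the Frobenius route exploits this, your operator-norm factoring does not. If you replace the submultiplicative step with a direct Frobenius bound (or, equivalently, control $\|(\bA-\bA^{\star})\ones\|_2$ rather than $\|\bA-\bA^{\star}\|_{\infty\to\infty}$-type quantities), the rest of your outline goes through.
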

\begin{proof}[Proof of Corollary \ref{cor:concentrateNL}]
    By triangle inequality
    \begin{align}
        &\, \|\rmD^{-1}\rmA - (\rmD^{\star})^{-1} \rmA^{\star} \| \leq \|\rmD^{-1}\rmA - \rmD^{-1}\rmA^{\star} + \rmD^{-1}\rmA^{\star} - (\rmD^{\star})^{-1}\rmA^{\star} \|\notag\\
        \leq &\, \|\rmD^{-1}\|\cdot \|\rmA - \rmA^{\star}\| + \|(\rmD^{-1} - (\rmD^{\star})^{-1} )\rmA^{\star} \|\notag
    \end{align}
    where $\|\rmD^{-1}\| = (\rD_{\min})^{-1} \lesssim \rho_{N}^{-1}$ by Lemma ~\ref{lem:Dmin} and $\|\rmA - \rmA^{\star}\| \lesssim \sqrt{\rho_{N}}$ by \Cref{thm:concentration}. For the second term, recall the definitions of $\alpha, \beta$ in \eqref{eqn:alpha_beta_def} and $\rho_{N}$ in \eqref{eqn:rho_binary}, then with probability at least $1 - e^{-(\D_{\mathrm{AM}} + \delta/2)q_{N}}$,
    \begin{align*}
        &\, \|(\rmD^{-1} - (\rmD^{\star})^{-1} )\rmA^{\star}\|^2 \leq \|(\rmD^{-1} - (\rmD^{\star})^{-1} )\rmA^{\star}\|^2_{\frob}\\
        \leq &\, \sum\limits_{i=1}^{|\gV|} (\rD^{-1}_{ii} - (\rD^{\star}_{ii})^{-1})^2  \sum\limits_{j=1}^{|\gV|} \alpha^2 \leq \frac{N\alpha^2}{ \rD_{\min}^2 \cdot ( \rD^{\star}_{ii})^2} \sum\limits_{i=1}^{|\gV|} ( \rD_{ii} - \rD^{\star}_{ii})^2\\
        \lesssim &\, \frac{N \alpha^2}{ \rD_{\min}^2 \cdot (\rD^{\star}_{ii})^2} \sum\limits_{i=1}^{|\gV|} \sum\limits_{k=1}^{|\gV|} (A_{ik} - A^{\star}_{ik})^2 \leq \frac{N \alpha^2}{ \rD_{\min}^2 \cdot (\rD^{\star}_{ii})^2} \|\rmA - \rmA^{\star}\|^2_{\frob}\\
        \leq &\, \frac{N^2\alpha^2}{ \rD_{\min}^2 \cdot (\rD^{\star}_{ii})^2} \|\rmA - \rmA^{\star}\|^2_{2} \lesssim \frac{1}{\rho_{N}},
    \end{align*}
    where the last inequality holds by \Cref{thm:concentration} since $\rD_{\min} \gtrsim \rho_{N}$ by Lemma ~\ref{lem:Dmin}, $\rD^{\star}_{ii} = \rho_{N}$ and $N \alpha \asymp \rho_{N}$. The desired result follows by combining the two estimates above.
\end{proof}

\begin{lemma}\label{lem:operatorNormNL}
    Deterministically, $ \|(\rmD^{\star})^{-1}\rmA^{\star}\|_{2\to \infty} \lesssim 1/\sqrt{N}$. Suppose that $\D_{\mathrm{AM}} = 1 + \delta$ for some $\delta >0$, then with probability at least $1-e^{-(\D_{\mathrm{AM}} + \delta/2)q_{N}}$,
        \begin{align}
             \|\rmD^{-1}\rmA\|_{2\to \infty} \lesssim 1/\sqrt{\rho_{N}} + 1/\sqrt{N}.\notag
        \end{align}
    For each $v\in \gV$, the following holds with probability at least $1 - e^{-(\D_{\mathrm{AM}} + \delta/2)q_{N}}$
    \begin{align}
        &\, \|\rmD^{-1}\rmA - (\rmD^{(-v)})^{-1}\rmA^{(-v)}\| \lesssim \|\rmD^{-1}\rmA\|_{2 \to \infty}, \notag \\ 
        &\, \|(\rmD^{(-v)})^{-1}\rmA^{(-v)} - (\rmD^{\star})^{-1}\rmA^{\star}\| \leq 2/\sqrt{\rho_{N}} + 1/\sqrt{N}.\notag
    \end{align}
\end{lemma}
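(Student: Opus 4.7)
The plan is to mirror the proof of \Cref{lem:operatorNormA}, adapting each step to accommodate the diagonal normalization. The three claims are handled sequentially.

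For the deterministic bound on $\|(\bD^{\star})^{-1}\bA^{\star}\|_{2\to\infty}$, I would work row-by-row. Each entry satisfies $A^{\star}_{ij} \leq \sum_{m\in\cM} p^{(m)}\binom{n-2}{m-2} \asymp \rho_n/n$, so $\|\bA^{\star}_{v:}\|_2 \lesssim \rho_n/\sqrt{n}$; since $D^{\star}_{vv} = \sum_j A^{\star}_{vj} \asymp \rho_n$ by the balanced two-community structure, dividing gives $\|[(\bD^{\star})^{-1}\bA^{\star}]_{v:}\|_2 \lesssim 1/\sqrt{n}$ and the claimed $2\to\infty$ bound follows.

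For $\|\bD^{-1}\bA\|_{2\to\infty}$ I would use the triangle inequality
\[
\|\bD^{-1}\bA\|_{2\to\infty} \leq \|\bD^{-1}\bA - (\bD^{\star})^{-1}\bA^{\star}\| + \|(\bD^{\star})^{-1}\bA^{\star}\|_{2\to\infty},
\]
bound the first term by the operator norm, and invoke \Cref{cor:concentrateNL} to get $\lesssim 1/\sqrt{\rho_n}$. Combined with the deterministic estimate, this yields $\lesssim 1/\sqrt{\rho_n} + 1/\sqrt{n}$ on the event that both \Cref{lem:Dmin} and \Cref{cor:concentrateNL} hold, which occurs with probability at least $1 - n^{-\D_{\mathrm{GH}} + \delta/2}$.

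For the leave-one-out estimates, I would decompose
\[
\bD^{-1}\bA - (\bD^{(-v)})^{-1}\bA^{(-v)} = \bD^{-1}(\bA - \bA^{(-v)}) + (\bD^{-1} - (\bD^{(-v)})^{-1})\bA^{(-v)},
\]
and bound the operator norm by the Frobenius norm row-by-row. The $v$-th row reduces to $D_{vv}^{-1}\bA_{v:}$ (using $\bA^{(-v)}_{v:} = 0$ and the Moore--Penrose convention $(D^{(-v)}_{vv})^{-1} = 0$), contributing $\lesssim \rho_n^{-2}\|\bA\|_{2\to\infty}^2$ to the squared Frobenius norm. For each $i\neq v$, both $\|\bA_{i:} - \bA^{(-v)}_{i:}\|_1$ and $D_{ii} - D^{(-v)}_{ii}$ are proportional to $A_{iv}$ (up to the bounded factor $M-1$), since each counts the hyperedges through $i$ and $v$. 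Using $D_{ii}, D^{(-v)}_{ii} \gtrsim \rho_n$ from \Cref{lem:Dmin} together with $\sum_{i\neq v} A_{iv}^2 \leq \|\bA_{v:}\|_2^2 \leq \|\bA\|_{2\to\infty}^2$, summing yields $\|\bD^{-1}\bA - (\bD^{(-v)})^{-1}\bA^{(-v)}\|_{\frob} \lesssim \rho_n^{-1}\|\bA\|_{2\to\infty}$, which matches $\|\bD^{-1}\bA\|_{2\to\infty}$ via the row bound $\|[\bD^{-1}\bA]_{i:}\|_2 \leq D_{ii}^{-1}\|\bA_{i:}\|_2$. The second leave-one-out inequality then follows by triangle inequality with \Cref{cor:concentrateNL}: $\|(\bD^{(-v)})^{-1}\bA^{(-v)} - (\bD^{\star})^{-1}\bA^{\star}\| \leq \|(\bD^{(-v)})^{-1}\bA^{(-v)} - \bD^{-1}\bA\| + \|\bD^{-1}\bA - (\bD^{\star})^{-1}\bA^{\star}\| \lesssim 2/\sqrt{\rho_n} + 1/\sqrt{n}$.

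The main obstacle I anticipate is controlling the cross term $(D_{ii}^{-1} - (D^{(-v)}_{ii})^{-1})\bA^{(-v)}_{i:}$ in the leave-one-out decomposition: its naive contribution is $\rho_n^{-4}\|\bA\|_{2\to\infty}^4$, and absorbing this into $\rho_n^{-2}\|\bA\|_{2\to\infty}^2$ requires the prior estimate $\|\bA\|_{2\to\infty}^2 \lesssim \rho_n + \rho_n^2/n$ from \Cref{lem:operatorNormA}, which holds under $\rho_n \gtrsim \log n$. The probability bookkeeping is routine, since every failure event in \Cref{lem:Dmin}, \Cref{thm:concentration} and \Cref{cor:concentrateNL} fits within the target $n^{-\D_{\mathrm{GH}} + \delta/2}$.
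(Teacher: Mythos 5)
Your proposal is correct and matches the paper's approach: the paper's own proof is a one-line reference to repeating the arguments of Lemma~\ref{lem:operatorNormA}, with Corollary~\ref{cor:concentrateNL} taking the place of Theorem~\ref{thm:concentration} and Lemma~\ref{lem:Dmin} supplying $D_{\min}\gtrsim\rho_n$ (and the analogous lower bound for the nonzero entries of $\bD^{(-v)}$). Your explicit two-term decomposition $\bD^{-1}(\bA-\bA^{(-v)}) + \big(\bD^{-1}-(\bD^{(-v)})^{-1}\big)\bA^{(-v)}$ with the cross-term absorbed via $\|\bA\|_{2\to\infty}\lesssim\rho_n$ fills in exactly what the paper is invoking; the only point worth making explicit is that relating the resulting $\rho_n^{-1}\|\bA\|_{2\to\infty}$ back to $\|\bD^{-1}\bA\|_{2\to\infty}$ also uses the routine upper degree bound $D_{\max}\lesssim\rho_n$, which the paper likewise leaves implicit.
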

\begin{proof}[Proof of Lemma ~\ref{lem:operatorNormNL}]
       By Lemma ~\ref{lem:operatorNormA}, $\|\rmA^{\star}\|_{2\to \infty} \leq \rho_{N}/\sqrt{N}$, the first result then follows since $\rmD^{\star} = \mathrm{diag}(\rho_{N}, \ldots, \rho_{N})$. The remaining results follows the same arguments as in the proof of Lemma ~\ref{lem:operatorNormA}, and the lower bound of the minimum degree in $\rmD$, as well as the lower bound of non-zero entries in $\rmD^{(-v)}$ (for the sake of \emph{Moore-Penrose} inverse).
\end{proof}

\begin{corollary}\label{cor:eigenvalueApproxLap}
     For each $i\in \gV$, the following holds with probability at least $1-N^{-\D_{\mathrm{AM}} + \delta/2}$,
    \begin{align}
        |\lambda_i - \lambda_i^{\star}| \lesssim \frac{1}{\sqrt{\rho_{N}}}, \quad |\lambda_i - \lambda_i^{(-v)}| \lesssim \frac{1}{\sqrt{\rho_{N}}} + \frac{1}{\sqrt{N}}, \quad |\lambda_i^{\star} - \lambda_i^{(-v)}| \lesssim \frac{2}{\sqrt{\rho_{N}}} + \frac{1}{\sqrt{N}}.
    \end{align}
    In particular in our model \ref{def:non_uniform_HSBM_binary}, with probability at least $1 - e^{-(\D_{\mathrm{GH}} + \delta/2)q_{N}}$,
    \begin{align}
        &\, |\lambda_i| \asymp |\lambda^{\star}_i| \asymp |\lambda^{(-v)}_i| \asymp 1, \quad  i = 1, 2.\notag \\
        &\, |\lambda_i| \asymp |\lambda^{\star}_i| \asymp |\lambda^{(-v)}_i| \asymp \frac{1}{\sqrt{\rho_{N}}}, \quad  i = 3, \ldots, N.\notag
    \end{align}
\end{corollary}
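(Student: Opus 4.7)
The plan is to reduce the generalized eigenvalue problem to a standard symmetric one and then apply Weyl's inequality. First, observe that $\bA\bu = \lambda\bD\bu$ holds if and only if $\bL(\bD^{1/2}\bu) = \lambda(\bD^{1/2}\bu)$, provided $\bD$ is positive definite (which by \Cref{lem:Dmin} occurs on an event of probability at least $1 - n^{-\D_{\mathrm{GH}} + \delta/2}$). The same similarity transformation identifies $\{\lambda_i^{\star}\}$ and $\{\lambda_i^{(-v)}\}$ with the ordinary eigenvalues of the symmetric matrices $\bL^{\star}$ and $\bL^{(-v)}$. So it suffices to compare the spectra of $\bL$, $\bL^{\star}$, $\bL^{(-v)}$ pairwise.

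Then I would invoke Weyl's inequality (\Cref{lem:weyl}), which reduces each eigenvalue gap to a corresponding operator-norm bound. For $\|\bL - \bL^{\star}\|$, decompose
\begin{align*}
\bL - \bL^{\star} = \bD^{-1/2}(\bA - \bA^{\star})\bD^{-1/2} + \bigl[\bD^{-1/2}\bA^{\star}\bD^{-1/2} - (\bD^{\star})^{-1/2}\bA^{\star}(\bD^{\star})^{-1/2}\bigr].
\end{align*}
The first summand is controlled by $\|\bD^{-1}\|\cdot \|\bA - \bA^{\star}\| \lesssim \rho_n^{-1}\cdot \sqrt{\rho_n} = 1/\sqrt{\rho_n}$, using \Cref{lem:Dmin} and \Cref{thm:concentration}. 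The second summand is bounded by a Frobenius-norm estimate virtually identical to the one in the proof of \Cref{cor:concentrateNL}, exploiting that entries of $\bA^{\star}$ are of order $\log(n)/n$ while $\max_v|D_{vv} - D^{\star}_{vv}| \lesssim \sqrt{\rho_n\log n}$ (from \Cref{lem:rowConcentration} with $\bw = \ones$ and a union bound over $v\in[n]$). Analogous decompositions, combined with the perturbation bounds in \Cref{lem:operatorNormNL}, yield the remaining two norm inequalities $\|\bL - \bL^{(-v)}\|$ and $\|\bL^{\star} - \bL^{(-v)}\|$.

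For the magnitude claims, note that $\bD^{\star} = \rho_n\bI_n$ since every expected degree equals $\rho_n$ by \eqref{eqn:rhon}; hence $\bL^{\star} = \bA^{\star}/\rho_n$. The block structure \eqref{eqn:EA} exhibits $\bA^{\star}$ as a rank-$2$ perturbation of $-p\bI_n$, so its eigenvalues are $(p+q)n/2 - p = \rho_n$, $(p-q)n/2 - p$, and $-p$ with multiplicity $n-2$. Dividing by $\rho_n$ gives $\lambda_1^{\star} = 1$, $\lambda_2^{\star} \asymp 1$, and $|\lambda_i^{\star}| = p/\rho_n \asymp 1/n$ for $i\geq 3$. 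Because $1/n \ll 1/\sqrt{\rho_n}$, the Weyl bounds from the first part give $|\lambda_i|, |\lambda_i^{(-v)}| \asymp 1$ for $i = 1, 2$ and $|\lambda_i|, |\lambda_i^{(-v)}| \lesssim 1/\sqrt{\rho_n}$ for $i\geq 3$, which is how I read the $\asymp$ in the statement.

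I expect the main obstacle to be the second summand of the Laplacian decomposition: comparing the diagonal rescalings by $\bD$ and $\bD^{\star}$. A naive bound only yields $\|\bD^{-1/2} - (\bD^{\star})^{-1/2}\|\cdot \|\bA^{\star}\|_{2\to\infty}\cdot (\ldots) = O(1)$, which is too crude; one has to exploit row-wise cancellation through the uniform control of $|D_{vv} - D^{\star}_{vv}|$, mirroring the Frobenius-norm calculation already executed in the proof of \Cref{cor:concentrateNL}. Everything else is a straightforward application of Weyl's inequality together with the previously established perturbation estimates.
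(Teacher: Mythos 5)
Your plan lands on the same tools the paper's proof uses (Weyl plus the perturbation estimates already established in this appendix), but it takes a different technical route and contains one genuine slip worth fixing.

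The paper's argument is two lines long: it applies Weyl directly to the (non-symmetric) matrix $\bD^{-1}\bA$, cites \Cref{cor:concentrateNL} for $\|\bD^{-1}\bA - (\bD^{\star})^{-1}\bA^{\star}\| \lesssim 1/\sqrt{\rho_n}$, and handles the other two eigenvalue differences with \Cref{lem:operatorNormNL}. You instead pass to the symmetrized Laplacian $\bL$. That is more rigorous with respect to \Cref{lem:weyl} as stated (which requires symmetry), but it forces you to re-derive a norm bound on $\bL - \bL^{\star}$ that duplicates \Cref{cor:concentrateNL} rather than reusing it. Both routes are viable; the paper's is shorter precisely because the needed operator-norm corollary already exists.

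The slip: \Cref{lem:rowConcentration} with $\bw = \ones$ gives $\zeta(\ones, c) = \varphi(1)\cdot 1 \cdot \rho_n = O(\rho_n)$, not $O(\sqrt{\rho_n\log n})$; the sharper $\sqrt{\rho_n\log n}$ bound comes from a direct Bernstein/Chernoff estimate on the degree, which the paper invokes separately. More to the point, if you actually feed the pointwise control $\max_v|D_{vv}-D^{\star}_{vv}|\lesssim\sqrt{\rho_n\log n}$ into a Frobenius estimate for the rescaling term, you arrive at $\|\bL-\bL^{\star}\|\lesssim\sqrt{\log n/\rho_n}$, which carries an extra $\sqrt{\log n}$ and does not even vanish when $\rho_n\asymp\log n$ — the regime of interest. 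What the proof of \Cref{cor:concentrateNL} actually exploits is the \emph{aggregate} control
\begin{align*}
\sum_{v}(D_{vv}-D^{\star}_{vv})^2 \lesssim \|\bA-\bA^{\star}\|_{\frob}^2 \lesssim n\|\bA-\bA^{\star}\|^2 \lesssim n\rho_n,
\end{align*}
which avoids the extra logarithm. ``Mirroring'' that proof means using its Frobenius bookkeeping, not substituting an $\ell_\infty$ control of $D_{vv}-D^{\star}_{vv}$.

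Your final observation is a fair catch: $|\lambda_i^{\star}| = p/\rho_n \asymp 1/n$ for $i\geq 3$, so the stated $\asymp 1/\sqrt{\rho_n}$ is an overstatement; Weyl only yields the upper bound $|\lambda_i|\lesssim 1/\sqrt{\rho_n}$, with no matching lower bound established in the paper.
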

\begin{proof}[Proof of Corollary \ref{cor:eigenvalueApproxLap}]
    By Weyl's inequality \ref{lem:weyl} and Corollary \ref{cor:concentrateNL}, for any $i\in \gV$,
    \begin{align}
        |\lambda_i - \lambda_i^{\star}| \leq \|\rmD^{-1}\rmA - (\rmD^{\star})^{-1}\rmA^{\star}\| \lesssim 1/\sqrt{\rho_{N}}.\notag
    \end{align}
    The remaining result follows as well by Lemma ~\ref{lem:operatorNormNL} and Lemma ~\ref{lem:weyl}.
\end{proof}

\subsection{Proof of algorithm functionality}
Following the approach in \Cref{sec:achievability_matrices_binary}, we use $\frac{1}{\lambda_2} (\E \rmD)^{-1} \rmA \rvu_2$ as the pivot vector. Proposition \ref{prop:entrywiseLap} provides an analogous result to Proposition \ref{prop:entrywise_diff_A}. 
Using the analytical framework established for proving \Cref{thm:achievability_matrices}, we show that \Cref{alg:spectral_partition_binary} achieves identical performance regardless of whether the input is the normalized graph Laplacian $\rmL$ or the adjacency matrix $\rmA$.

\begin{proposition}\label{prop:entrywiseLap}
Suppose $\D_{\mathrm{AM}} = 1 + \delta$ for some constant $\delta >0$, then with probability at least $1 - e^{-(\D_{\mathrm{AM}} + \delta/2)q_{N}}$
    \begin{align}
        \sqrt{N} \cdot \min_{s\in \{\pm 1\}} \|s\rvu_2 - \frac{1}{\lambda_2} (\E \rmD)^{-1} \rmA \rvu_2\|_{\infty} \leq C/\log(\rho_{N}).
    \end{align}
\end{proposition}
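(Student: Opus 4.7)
The plan is to run the leave-one-out scheme from the proof of \Cref{prop:entrywiseDiffA} on the generalized eigenvalue problem $\bA\bu = \lambda\bD\bu$, with the added bookkeeping needed to replace the random diagonal $\bD$ by $\E\bD$. I would keep the notation $\bu = \bu_2$, $\bu^\star = \bu_2^\star$, $\lambda = \lambda_2$, and introduce the leave-one-out eigenvector $\bu^{(-v)}$ of the generalized problem $(\bA^{(-v)}, \bD^{(-v)})$ together with the sign choices $s, s^{(-v)}$ exactly as in \Cref{app:achievabilityAdj}. All bounds will be stated on the event of \Cref{lem:Dmin}, where $D_{\min} \gtrsim \rho_n$; this event has probability at least $1 - n^{-\D_{\mathrm{GH}}+\delta/2}$, which accounts for the failure probability claimed in the statement.

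Starting from the identity $s\bu = (\lambda\bD)^{-1}\bA \, s\bu$ and using $(\E\bD)_{vv} = \rho_n$, the $v$-th entry of the target difference splits as
\begin{align*}
    \bigl(s\bu - (\lambda\E\bD)^{-1}\widetilde{\bA}\bu\bigr)_v &= \lambda^{-1}\bigl(D_{vv}^{-1} - \rho_n^{-1}\bigr)(\widetilde{\bA}\bu)_v + \lambda^{-1} D_{vv}^{-1}\bigl(\bA\, s\bu - \widetilde{\bA}\bu\bigr)_v.
\end{align*}
I would then insert $\pm\bu^\star$ and $\pm\, s^{(-v)}\bu^{(-v)}$ inside the second bracket to decouple $\bA_{v:}$ from the vector it multiplies, producing the same three-piece split as in the proof of \Cref{prop:entrywiseDiffA}: a weighting-perturbation term $(\bA - \widetilde{\bA})\bu^\star$, a leave-one-out term $(\bA - \bA^\star)_{v:}(s^{(-v)}\bu^{(-v)} - \bu^\star)$, and a coarse remainder controlled by $\|s\bu - s^{(-v)}\bu^{(-v)}\|_2$. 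The first is handled by row concentration (\Cref{lem:rowConcentration}) exactly as in \Cref{lem:thirdApproximation}; the second combines \Cref{lem:rowConcentration} with a Davis-Kahan-type estimate $\|s^{(-v)}\bu^{(-v)} - \bu^\star\|_2 \lesssim \rho_n^{-1/2} + n^{-1/2}$ coming from \Cref{cor:concentrateNL} and \Cref{lem:operatorNormNL}; the third uses \Cref{lem:operatorNormNL} and the row bound $\|\bA\|_{2\to\infty} \lesssim \sqrt{\rho_n} + \rho_n/\sqrt{n}$ from \Cref{lem:operatorNormA}. A self-bootstrapping step, identical to the one in the proof of \Cref{prop:entrywiseDiffA}, then converts these into the delocalization $\|\bu\|_\infty \lesssim 1/\sqrt{n}$ and finally into the announced $1/\log(\rho_n)$ rate after multiplication by $\sqrt{n}$.

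The main obstacle is the new term $(D_{vv}^{-1} - \rho_n^{-1})(\widetilde{\bA}\bu)_v$, which has no counterpart in the adjacency proof. A uniform Bernstein bound with a union bound over $v$ yields $|D_{vv} - \rho_n| \lesssim \sqrt{\rho_n \log n}$, hence $|D_{vv}^{-1} - \rho_n^{-1}| \lesssim \sqrt{\log n}\,\rho_n^{-3/2}$; paired with the crude estimate $|(\widetilde{\bA}\bu)_v| \lesssim \rho_n\|\bu\|_\infty$ this leaves a contribution of order $\sqrt{\log n/\rho_n}\cdot\|\bu\|_\infty$. Since $\rho_n \gtrsim \log n$ this contribution is $O(\|\bu\|_\infty)$, but is only forced below $\|\bu\|_\infty/\log(\rho_n)$ by either sharpening the Bernstein estimate using $\mathrm{Var}(D_{vv}) \asymp \rho_n$ in place of the range, or by exploiting the weighting structure from \Cref{lem:achieveabilityUpperProb} to estimate $(\widetilde{\bA}\bu)_v$ more tightly than the crude $\rho_n\|\bu\|_\infty$. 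Apart from this extra diagonal-concentration step, the argument proceeds in direct parallel with the adjacency case.
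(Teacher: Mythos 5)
Your proposal has the right overall architecture (leave-one-out plus row concentration plus a diagonal correction), but two concrete things prevent it from closing.

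First, the statement's $(\lambda_2 \E\bD)^{-1}\widetilde{\bA}\bu_2$ is a misprint for $(\lambda_2^\star\E\bD)^{-1}\widetilde{\bA}\bu_2^\star$: the pivot in the paper's proof is $(\E\bD)^{-1}\widetilde{\bA}\bu^\star/\lambda^\star$, exactly paralleling Proposition~\ref{prop:entrywiseDiffA}, and the downstream use in the analogue of Theorem~\ref{thm:exactRecoveryAdj} needs the deterministic $\bu^\star$. Taking the statement literally, your second piece is $\lambda^{-1}D_{vv}^{-1}(\bA s\bu - \widetilde{\bA}\bu)_v$; after inserting $\pm\bu^\star$ this produces $(\bA - \widetilde{\bA})\bu^\star$ (handled by Lemma~\ref{lem:thirdApproximation}) plus a new remainder $(\bA - \widetilde{\bA})_{v:}(s\bu - \bu^\star)$, which is \emph{not} independent of $\bA_{v:}$ and therefore needs its own leave-one-out round and row-concentration bound. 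You don't address this term, and it is a genuine extra piece. The paper's decomposition
\begin{align*}
\frac{\bD^{-1}\bA(s\bu - \bu^\star)}{\lambda} \;+\; \frac{(\bD^{-1} - (\E\bD)^{-1})\bA\bu^\star}{\lambda} \;+\; (\E\bD)^{-1}\Big(\frac{\bA\bu^\star}{\lambda} - \frac{\widetilde{\bA}\bu^\star}{\lambda^\star}\Big)
\end{align*}
never lets the weighting difference $\bA - \widetilde{\bA}$ touch a random vector, precisely to avoid this.

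Second, you correctly flag the diagonal term as the new obstacle, but your two suggested fixes do not actually close it. Bernstein already runs at variance $\asymp\rho_n$; a union bound over $v$ then inescapably yields $\max_v|D_{vv} - \E D_{vv}| \gtrsim \sqrt{\rho_n\log n}$, and the resulting $\sqrt{\log n/\rho_n}$ is $\Theta(1)$ when $\rho_n\asymp\log n$, not $O(1/\log\rho_n)$. Sharpening $|(\widetilde{\bA}\bu)_v|$ doesn't rescue it either: by Lemma~\ref{lem:achieveabilityUpperProb} the informative entries satisfy $|(\widetilde{\bA}\bu^\star)_v| \gtrsim \rho_n/\sqrt n$, so the crude estimate $\rho_n\|\bu\|_\infty$ is already order-tight. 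What the paper actually does is bound $\|\bD^{-1} - (\E\bD)^{-1}\|$ by $|D_{vv} - \E D_{vv}|/(D_{\min}\cdot\E D_{vv})$ with a claimed per-entry deviation $|D_{vv} - \E D_{vv}| \lesssim \sqrt{\rho_n}$ (no $\sqrt{\log n}$), and pairs that with $\|\bA\bu^\star\|_\infty \lesssim \rho_n/\sqrt n$, giving $1/\sqrt{n\rho_n}$ and hence $1/\sqrt{\rho_n} \ll 1/\log(\rho_n)$ after the $\sqrt n$ rescale. Your plan as written does not reproduce that chain, and the alternatives you float won't beat the $\sqrt{\log n}$ loss.
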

\begin{proof}[Proof of Proposition \ref{prop:entrywiseLap}]
We first show that the eigenvector $\rvu_2$ is delocalized, i.e., $\|\rvu_2\|_{\infty}\lesssim N^{-1/2}$. For convenience in notation, we denote $\lambda = \lambda_2$, $\lambda^{\star} = \lambda_2^{\star}$, $\lambda^{(-v)} = \lambda_2^{(-v)}$ and $\rvu = \rvu_2$, $\rvu^{\star} = \rvu_2^{\star}$, $\rvu^{(-v)} = \rvu_2^{(-v)}$. By Corollary \ref{cor:eigenvalueApproxLap}, we then have
\begin{align*}
    &\, \|\rvu\|_{\infty} = \|s\rmD^{-1}\rmA \rvu/\lambda \|_{\infty} \\
    \leq &\, |\lambda|^{-1}\cdot \Big( \|\rmD^{-1}\rmA (s\rvu - \rvu^{\star}) \|_{\infty} + \|\rmD^{-1}\rmA\rvu^{\star} \|_{\infty}\Big)\\
    \lesssim &\, |\lambda^{\star}|^{-1}\cdot \Big( \max_{v\in \gV} |\rmD^{-1}\rmA_{v:}(s\rvu - \rvu^{\star})| + \|\rmD^{-1}\rmA\rvu^{\star} \|_{\infty} \Big)\\
    \lesssim &\, |\lambda^{\star}|^{-1}\cdot \Big( \|\rmD^{-1}\rmA\|_{2\to \infty}\cdot \|s\rvu - s^{(-v)}\rvu^{(-v)}\|_2 + \max_{v\in \gV} |\rmD^{-1}\rmA_{v:}(s^{(-v)}\rvu^{(-v)} - \rvu^{\star})| +  \|\rmD^{-1}\rmA\rvu^{\star} \|_{\infty} \Big)\\
    \lesssim &\, \Big[ \big( \frac{1}{\sqrt{\rho_{N}}} + \frac{1}{\sqrt{N}} \big) \|\rvu\|_{\infty} + \frac{1}{\log(\rho_{N})}\cdot \Big( \| \rvu \|_{\infty} + \frac{1}{\sqrt{N}} \Big)  + \frac{1}{\sqrt{N}} \Big]
\end{align*}
where in the second to last line, the first term could be bounded by Lemma ~\ref{lem:firstApproxLap} and Lemma ~\ref{lem:operatorNormNL}, while the last two terms can still be bounded by arguments combining lower bound of entries in $\rmD$ and Lemmas \ref{lem:secondApproximation}, \ref{lem:thirdApproximation}, respectively. By rearranging terms, one could prove $\|\rvu\|_{\infty} \lesssim 1/\sqrt{N}$. Similarly, with the help from Lemmas \ref{lem:firstApproximation}, \ref{lem:secondApproximation}, \ref{lem:thirdApproximation}
\begin{align*}
    &\, \Big\| s\rvu - \frac{(\E\rmD)^{-1} \rmA \rvu^{\star} }{\lambda^{\star}} \Big\|_{\infty} \\
    = &\, \Big\| s\frac{\rmD^{-1} \rmA\rvu }{\lambda} - \frac{\rmD^{-1}\rmA\rvu^{\star} }{\lambda} + \frac{\rmD^{-1}\rmA\rvu^{\star}}{\lambda} - \frac{(\E\rmD)^{-1} \rmA\rvu^{\star} }{\lambda^{\star}} \Big\|_{\infty}\\
    \lesssim &\, |\lambda^{\star}|^{-1} \|\rmD^{-1}\rmA(s\rvu - \rvu^{\star})\|_{\infty} + |\lambda^{\star}|^{-1} \|\rmD^{-1} - (\E \rmD)^{-1}\|\cdot \|\rmA \rvu^{\star}\| \\
    \lesssim &\, |\lambda^{\star}|^{-1}\cdot \Big( \|\rmD^{-1}\rmA\|_{2\to \infty}\cdot \|s\rvu - s^{(-v)}\rvu^{(-v)}\|_2 + \max_{v\in \gV} |\rmD^{-1}\rmA_{v:}(s^{(-v)}\rvu^{(-v)} - \rvu^{\star})| \Big) \\
    &\,  + |\lambda^{\star}|^{-1}\frac{|\rD_{\min} - \E \rD_{v}|}{|\rD_{\min}|\cdot |\E \rD_{v}|}\cdot \|\rmA \rvu^{\star}\| \\
    \lesssim &\, \Big[ \big( \frac{1}{\sqrt{\rho_{N}}} + \frac{1}{\sqrt{N}} \big) \|\rvu\|_{\infty} + \frac{1}{\log(\rho_{N})}\cdot \Big( \| \rvu \|_{\infty} + \frac{1}{\sqrt{N}} \Big)  + \frac{1}{\sqrt{N}} \Big] +  \frac{\sqrt{\rho_{N}}}{\rho_{N}\cdot \rho_{N}} \cdot \frac{\rho_{N}}{\sqrt{N}} \\
    \lesssim &\, \frac{1}{\sqrt{N}\log(\rho_{N})},
\end{align*}
where in the second to last line, the difference $|\lambda - \lambda^{\star}|$ is controlled by Corollary \ref{cor:eigenvalueApproxLap}, and a simple application of Chernoff proves that $|\rD_{v} - \E \rD_{v}|\leq \sqrt{\rho_{N}}$ with high probability for each $v\in \gV$.
\end{proof}
The thing remaining is to show that the Algorithm \ref{alg:spectral_partition_binary} achieves exact recovery when $\D_{\mathrm{AM}}> 1$, which follows exactly as the proof of \Cref{thm:achievability_matrices}.

\begin{lemma}\label{lem:firstApproxLap}
Under the setting in Proposition \ref{prop:entrywiseLap},
    \begin{align}
        \|s\rvu - s^{(-v)}\rvu^{(-v)}\|_2 \lesssim \|\rvu\|_{\infty}.
    \end{align}
\end{lemma}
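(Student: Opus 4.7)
The plan is to mirror the proof of \Cref{lem:firstApproximation}, adapted to the normalized setting. I would work with $\bM \coloneqq \bD^{-1}\bA$ and $\bM^{(-v)} \coloneqq (\bD^{(-v)})^{-1}\bA^{(-v)}$, which share real spectra with the symmetric Laplacians $\bL$, $\bL^{(-v)}$ and whose eigenvectors coincide with the generalized eigenvectors $\bu$, $\bu^{(-v)}$ from the statement. Applying the generalized Davis--Kahan inequality \Cref{lem:generalizedDavisKahan} (or, equivalently, passing to $\bL$ and $\bL^{(-v)}$ and absorbing the $\bD^{1/2}$ factors, which is harmless because $D_{vv} \asymp \rho_n$ uniformly in $v$ by \Cref{lem:Dmin}) yields
\[
\|s\bu - s^{(-v)}\bu^{(-v)}\|_2 \;\lesssim\; \frac{\|(\bM^{(-v)}-\bM)\bu\|_2}{\Delta}, \qquad \Delta \coloneqq \min_{i\neq 2}|\lambda_i^{(-v)} - \lambda|.
\]
Weyl's inequality combined with \Cref{cor:concentrateNL} and the eigenvalue scaling in \Cref{cor:eigenvalueApproxLap} (where $|\lambda_2|\asymp 1$ is separated from $|\lambda_1|\asymp 1$ and from $|\lambda_i|\asymp \rho_n^{-1/2}$ for $i\geq 3$ by absolute constants) yields $\Delta \gtrsim 1$ with probability at least $1 - n^{-\D_{\mathrm{GH}} + \delta/2}$.

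The heart of the proof is establishing $\|(\bM^{(-v)}-\bM)\bu\|_2 \lesssim \|\bu\|_\infty$. Exploiting $\bA\bu = \lambda\bD\bu$, I would decompose
\[
(\bM^{(-v)}-\bM)\bu \;=\; \lambda\bigl((\bD^{(-v)})^{-1}\bD - \bI\bigr)\bu \;-\; (\bD^{(-v)})^{-1}(\bA - \bA^{(-v)})\bu.
\]
The second term vanishes at $i=v$ by Moore--Penrose convention, and at $i\neq v$ obeys the row-support argument already used in \Cref{lem:firstApproximation}: $|((\bA-\bA^{(-v)})\bu)_i|\leq\sum_m(m-1)A_{iv}\|\bu\|_\infty$, whence its $L^2$ norm is $\lesssim \rho_n^{-1}\|\bA\|_{2\to\infty}\|\bu\|_\infty \lesssim \|\bu\|_\infty/\sqrt{\rho_n}$. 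The first (diagonal) term's $v$-coordinate equals $-\lambda u_v$ (since $(\bD^{(-v)})^{-1}_{vv}D_{vv} = 0$ makes that diagonal entry $-1$), contributing $O(\|\bu\|_\infty)$; for $i\neq v$ the entry equals $\lambda u_i(D_{ii}-D^{(-v)}_{ii})/D^{(-v)}_{ii}$. Combining $D^{(-v)}_{ii}\gtrsim \rho_n$ with the identity
\[
\sum_{i\neq v}\bigl(D_{ii}-D^{(-v)}_{ii}\bigr) \;=\; \sum_{m\in\cM}(m-1)^2 \cdot \#\{m\text{-hyperedges through }v\} \;\lesssim\; \rho_n
\]
(the total weighted degree at $v$, which concentrates around $\rho_n$ by Chernoff), and the elementary inequality $\sum x_i^2 \leq (\sum x_i)^2$ for non-negative $x_i$, gives $\sum_{i\neq v}(D_{ii}-D^{(-v)}_{ii})^2/(D^{(-v)}_{ii})^2 = O(1)$. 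Hence the total $L^2$ contribution of the first term is $O(\|\bu\|_\infty)$.

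The main obstacle is handling the Moore--Penrose convention cleanly at $i=v$: because $(\bD^{(-v)})^{-1}_{vv}=0$ while $\bD^{-1}_{vv}>0$, the relation $(\bD^{(-v)})^{-1}\bD = \bI$ fails at $i=v$ and instead contributes an isolated diagonal entry $-1$, producing the term $-\lambda u_v$. One must isolate this $v$-coordinate before invoking the aggregate Chernoff-style bound on $\sum_{i\neq v}(D_{ii}-D^{(-v)}_{ii})$ that controls the remaining coordinates. Once this bookkeeping is in hand and combined with the constant eigengap $\Delta \gtrsim 1$, the claimed bound $\|s\bu - s^{(-v)}\bu^{(-v)}\|_2 \lesssim \|\bu\|_\infty$ follows.
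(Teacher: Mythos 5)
Your proof is correct and takes a genuinely different route from the paper's. Both start by invoking the generalized Davis--Kahan bound (\Cref{lem:generalizedDavisKahan}) together with the constant eigengap $\Delta\gtrsim1$, so the task reduces to showing $\|(\bD^{-1}\bA-(\bD^{(-v)})^{-1}\bA^{(-v)})\bu\|_2\lesssim\|\bu\|_\infty$. The paper attacks this directly by a rowwise $L^1$ estimate, asserting that for $i\neq v$ the row $[\bD^{-1}\bA-(\bD^{(-v)})^{-1}\bA^{(-v)}]_{i:}$ has $L^1$ norm $\sum_m(m-1)A_{iv}/D_{ii}$; as written this overlooks the contribution from $D_{ii}\neq D^{(-v)}_{ii}$, i.e.\ the term $A^{(-v)}_{ij}(D_{ii}^{-1}-(D^{(-v)}_{ii})^{-1})$, which is of the same order and must be folded in. Your decomposition
\[
(\bM^{(-v)}-\bM)\bu \;=\; \lambda\bigl((\bD^{(-v)})^{-1}\bD-\bI\bigr)\bu \;-\; (\bD^{(-v)})^{-1}(\bA-\bA^{(-v)})\bu,
\]
obtained from the eigenvector equation $\bA\bu=\lambda\bD\bu$, cleanly separates the degree-perturbation piece from the adjacency-perturbation piece, handles the Moore--Penrose boundary at $i=v$ explicitly (the isolated $-\lambda u_v$ term), and controls the diagonal piece via the clean aggregate identity $\sum_{i\neq v}(D_{ii}-D^{(-v)}_{ii})=\sum_m(m-1)^2\cdot\#\{m\text{-hyperedges through }v\}\lesssim\rho_n$ followed by $\sum x_i^2\le(\sum x_i)^2$. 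This is more transparent than the paper's rowwise argument and actually repairs the imprecision in the paper's $L^1$ identity. The one thing worth stating explicitly, which you use but do not flag, is $D^{(-v)}_{ii}\gtrsim\rho_n$ for $i\neq v$; this follows from \Cref{lem:Dmin} and the fact that $D_{ii}-D^{(-v)}_{ii}$ is uniformly $o(\rho_n)$, so it is a minor gap shared with the paper rather than a flaw in your argument.
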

\begin{proof}[Proof of Lemma ~\ref{lem:firstApproxLap}]
    Let $\delta = \min_{i\neq 2}|\lambda_{i}^{(-v)} - \lambda|$ denote the absolute eigen gap, then by generalized Davis-Kahan Lemma ~\ref{lem:generalized_Davis_Kahan} where we take $\rmA^{(-v)}$, $\rmD^{(-v)}$, $\Lambda = \lambda_{2}^{(-v)}$, $\rmU = s^{(-v)}\rvu^{(-v)}$, $\rmP = \rmI_{N} - \rmU\rmU^{\sT}$, $\widehat{\lambda} = \lambda$, $\widehat{\rvu}= s\rvu$, we have
    \begin{align}
        &\, \| s^{(-v)}\rvu^{(-v)} - s\rvu \|_2 \leq \sqrt{2}\|\rmP \rvu\|_2 \notag \\
        \leq &\, \frac{\sqrt{2}\cdot \|(\rmD^{(-v)})^{-1}\rmA^{(-v)}  - \lambda \rmI_{N} )\rvu\|_2}{\delta} = \frac{\sqrt{2} \cdot  \|\big[ (\rmD^{(-v)})^{-1}\rmA^{(-v)}  - \rmD^{-1}\rmA \big]\rvu\|_2}{\delta} \notag
    \end{align}
    By Lemma ~\ref{lem:weyl} and Corollary \ref{cor:eigenvalue_approx_adj}, the following holds with probability at least $1 - O(N^{-10})$,
    \begin{align*}
        \delta = &\, \min_{i\neq 2}|\lambda_{i}^{(-v)} - \lambda| = \min \big\{ |\lambda_3^{(-v)} - \lambda|, \,\, |\lambda_1^{(-v)} - \lambda|\big\}  \\
        \geq &\, \min \big\{ |\lambda_3 - \lambda|, \,\,|\lambda_1 - \lambda|\big\} - \|(\rmD^{(-v)})^{-1}\rmA^{(-v)}  - \rmD^{-1}\rmA\|\\
        \geq &\, \min \big\{ |\lambda^{\star}_3 - \lambda^{\star}|, \,\,|\lambda^{\star}_1 - \lambda^{\star}|\big\} - 2\|\rmD^{-1}\rmA - (\rmD^{\star})^{-1}\rmA^{\star}\| - \|(\rmD^{(-v)})^{-1}\rmA^{(-v)}  - \rmD^{-1}\rmA\| \\
        \gtrsim &\, 1,
    \end{align*}
    where the last line holds by Corollary \ref{cor:concentrateNL}.
    Let $\rvw = \big[ \rmD^{-1}\rmA - (\rmD^{(-v)})^{-1}\rmA^{(-v)} \big]\rvu$. For $i\neq v$,
    \begin{align}
        \| \big[ \rmD^{-1}\rmA - (\rmD^{(-v)})^{-1}\rmA^{(-v)} \big]_{i\cdot}\|_{1} = \indi{\rD_{ii} \neq 0} \sum\limits_{\ell \in \sL} (\ell - 1)\ermA_{iv}\cdot (\rD_{ii})^{-1}. \notag
    \end{align}
    Then
    \begin{align*}
        |\ervw_{v}| =&\, |(\rmD^{-1}\rmA \rvu)_{v}| = |\lambda|\cdot |\ervu{v}| \leq |\lambda| \cdot \|\rvu\|_{\infty},\\
        |\ervw_{i}| = &\, |\big[ \rmD^{-1}\rmA \rvu - (\rmD^{(-v)})^{-1}\rmA^{(-v)}\rvu \big]_{i}|\leq \| \big[ \rmD^{-1}\rmA - (\rmD^{(-v)})^{-1}\rmA^{(-v)} \big]_{i\cdot}\|_{1} \cdot \|\rvu\|_{\infty}
    \end{align*}
    Again by Corollary \ref{cor:concentrateNL}, Corollary \ref{cor:eigenvalueApproxLap}, with probability at least $1 - O(N^{-10})$,
    \begin{align*}
        \|(\rmA - \rmA^{(-v)})\rvu\|_2^2 \leq &\, \bigg( |\lambda|^2 + \indi{ \rD_{ii} \neq 0} \sum\limits_{\ell \in \sL} (\ell - 1)\ermA_{iv}\cdot (\rD_{ii})^{-1} \bigg) \cdot \|\rvu\|_{\infty}^2 \\
        \lesssim &\, \big( |\lambda^{\star}|^2 + \|\rmD^{-1}\rmA\|_{2 \to \infty} ) \cdot  \|\rvu\|_{\infty}^2 \lesssim \cdot \|\rvu\|_{\infty}^2.
    \end{align*}
    Therefore, by plugging in the lower bound for $\delta$ \eqref{eqn:delta_lower} into \Cref{eqn:u-v-u_upper}, we have
    \begin{align}
        \| s^{(-v)}\rvu^{(-v)} - s\rvu \|_2 \lesssim \|\rvu\|_{\infty}, \notag
    \end{align}
    completing the proof.
\end{proof}

%%%%%%%%%%%%%%%%%%%%%%%%%%%%%%%%%%%%%%%%%%%%%%%%%%%%%%%%%%%%
%%                          SDP                           %%
%%%%%%%%%%%%%%%%%%%%%%%%%%%%%%%%%%%%%%%%%%%%%%%%%%%%%%%%%%%%
\section{Performance analysis of SDP}\label{app:SDP}

This section is devoted to the performance analysis of the following \emph{semidefinite programming} (SDP).
\begin{align}
    \max\,\,&\,\<\rmA, \rmX\> \label{eqn:SDP}\\
    \textnormal{subject to } &\, \rmX \succeq 0, \,\, \<\rmX , \ones \ones^{\sT}\> = 0,\,\, \ermX_{ii} = 1,\quad \forall i\in\gV. \notag
\end{align}

% \begin{algorithm}[h]
% \caption{SDP via adjacency matrix} \label{alg:SDP}
% \begin{algorithmic}[1]
% \Require The adjacency matrix $\rmA$.
% \State Solve the SDP \eqref{eqn:SDP}.
% %\State Construct $\widehat{\rmA}$ using $\widehat{\rvy}^{(0)}$ and the coefficients above following \eqref{eqn:tildeA}.
% \State Let $\widehat{\Sigma}$ denote the optimal solution and $\widehat{\rvu}$ denote the eigenvector corresponding to the largest eigenvalue of $\widehat{\Sigma}$.

% \Ensure $\widehat{\gV}_{+} = \{v: \sign(\widehat{\ervu}_v) >0 \}$ and $\widehat{\gV}_{-} = \{v: \sign(\widehat{\ervu}_v) < 0 \}$.
% \end{algorithmic}
% \end{algorithm}

\begin{algorithm}
\caption{\textbf{SDP via adjacency matrix}}\label{alg:SDP}
\KwData{The adjacency matrix $\rmA$}

Solve the SDP \eqref{eqn:SDP} with $\widehat{\Sigma}$ denoting its solution.

Let $\widehat{\rvu}$ denote the eigenvector corresponding to the largest eigenvalue of $\widehat{\Sigma}$.

\KwResult{$\widehat{\gV}_{+} = \{v: \sign(\widehat{\ervu}_v) >0 \}$ and $\widehat{\gV}_{-} = \{v: \sign(\widehat{\ervu}_v) < 0 \}$.}
\end{algorithm}

\begin{theorem}\label{thm:phaseTransitionSDP}
    When the adjacency matrix $\rmA$ is used as the input, Algorithm \ref{alg:SDP} achieves exact recovery if and only if $\D_{\mathrm{AM}} > 1$.
\end{theorem}

The proof of \Cref{thm:phaseTransitionSDP} is included here for completeness, so that one could witness the appearance of $\D_{\mathrm{AM}}$, while the discrepancy between $\D_{\mathrm{AM}}$ and $\D_{\mathrm{GH}}$ could be seen straightforwardly.

% \paragraph{\textbf{Performance discrepancy}} As will be discussed in \Cref{app:SDP}, \eqref{eqn:SDP} is a relaxation of the Min-Bisection problem \eqref{eqn:minBisection} \cite{Goemans1995ImprovedAA}. The quantity that SDP maximizes is different from \eqref{eqn:MLE_maximize_binary}, the one maximized by MLE, leading to different performance thresholds $\D_{\mathrm{AM}}$ and $\D_{\mathrm{GH}}$ respectively. They coincide with each other when $\sL = \{2\}$ or $\sL = \{3\}$, but $\D_{\mathrm{AM}} \leq \D_{\mathrm{GH}}$ in general, explaining the suboptimality of SDP when only the contracted adjacency matrix $\rmA$ is given. However, spectral methods are not bothered with this. The strong consistency is achievable as long as the angle between $\rvu_2$ and $\rvu^{\star}_2$ remains within an acceptable range, and the theoretical guarantee could be obtained by entrywise comparison with a proper pivot.

\subsection{Necessity}
The proof strategy follows the same idea as \cite[Theorem 8]{Kim2018StochasticBM}. Consider the Min-Bisection problem
\begin{align}
    \max\,\,&\, \sum\limits_{1\leq i< j \leq N} \ervx_i \ermA_{ij} \ervx_j \label{eqn:minBisection}\\
    \textnormal{subject to } &\, \rvx \in \{\pm 1\}^{|\gV|}, \,\, \ones^{\sT}\rvx = 0. \notag
\end{align}
Note that any feasible $\rvx$ to \eqref{eqn:minBisection} corresponds to a feasible solution $\rmX = \rvx\rvx^{\sT}$ to \eqref{eqn:SDP}. Consequently, \eqref{eqn:SDP} is a relaxation of \eqref{eqn:minBisection} \cite{Goemans1995ImprovedAA}. Let $\widehat{\Sigma}$ and $\widehat{\rvy}_{\mathrm{MB}}$ denote the solutions to \eqref{eqn:SDP} and \eqref{eqn:minBisection} respectively. The probability that SDP \eqref{eqn:SDP} fails exact recovery is then lower bounded by
\begin{align}
    \P_{\mathrm{fail}} = \P(\widehat{\Sigma} \neq \rvy \rvy^{\sT}) \geq \P(\widehat{\rvy}_{\mathrm{MB}} \neq \pm \rvy).\notag
\end{align}
Then it suffices to consider the lower bound of the probability that $\widehat{\rvy}_{\mathrm{MB}}$ fails exact recovery. Given the adjacency matrix $\rmA$ and the assignment vector $\rvx \in \{\pm 1\}^{|\gV|}$ with $\ones^{\sT}\rvy = 0$, define
\begin{align}
    g(\rvx) = \sum\limits_{1\leq i< j \leq N} \ervx_i \ermA_{ij} \ervx_j.\notag
\end{align}
The failure of $\widehat{\rvy}_{\mathrm{MB}}$ implies the existence of some $\widetilde{\rvy}$ such that $g(\widetilde{\rvy}) \geq g(\rvy)$, i.e., 
\begin{align}
    \P(\widehat{\rvy}_{\mathrm{MB}} \neq \pm \rvy) = \P\big( \exists \widetilde{\rvy} \textnormal{ s.t. } g(\widetilde{\rvy}) \geq g(\rvy) \big).\notag
\end{align}

Following the same idea as the proof of \Cref{thm:impossibility_binary},  one could construct such $\widetilde{\rvy}$ by swapping signs of $a\in \gV_{+}$ and $b\in \gV_{-}$ but fixing others in $\rvy$. Then we have the decomposition
\begin{align}
    &\,g(\rvy) - g(\widetilde{\rvy}) = \sum\limits_{1\leq i < j \leq N} \ermA_{ij}\ervy_{i} \ervy_{j} - \sum\limits_{1\leq i < j \leq N} \ermA_{ij}\widetilde{\ervy}_{i} \widetilde{\ervy}_{j} \notag \\
    =&\, \sum\limits_{1\leq i < j \leq N} \sum\limits_{\ell \in \sL} \sum\limits_{e\supset\{i, j\}}\etA^{(\ell)}_{e} ( \ervy_{i} \ervy_{j} - \widetilde{\ervy}_{i} \widetilde{\ervy}_{j}) = \sum\limits_{\ell \in \sL} \sum\limits_{e\in \gE_{\ell}} \etA^{(\ell)}_{e} \sum\limits_{\{i, j\}\subset e} (\ervy_{i} \ervy_{j} - \widetilde{\ervy}_{i} \widetilde{\ervy}_{j}) \notag \\
    =&\, \underbrace{\sum\limits_{\ell \in \sL} \sum\limits_{e\ni a,\,e\not\ni b} \etA^{(\ell)}_{e} \sum\limits_{\{i, j\}\subset e } (\ervy_{i} \ervy_{j} - \widetilde{\ervy}_{i} \widetilde{\ervy}_{j})}_{\rX_{a}} + \underbrace{\sum\limits_{\ell \in \sL} \sum\limits_{e\not\ni a,\,e\ni b} \etA^{(\ell)}_{e} \sum\limits_{\{i, j\}\subset e} (\ervy_{i} \ervy_{j} - \widetilde{\ervy}_{i} \widetilde{\ervy}_{j})}_{\rX_{b}} \notag \\
    &\, + \underbrace{\sum\limits_{\ell \in \sL} \sum\limits_{e\supset\{ a, b \} } \etA^{(\ell)}_{e} \sum\limits_{\{i, j\}\subset e} (\ervy_{i} \ervy_{j} - \widetilde{\ervy}_{i} \widetilde{\ervy}_{j})}_{\rX_{ab}}. \notag
\end{align}
Furthermore, the contributions from $\rX_{a}$, $\rX_{b}$ and $\rX_{ab}$ are independent due to the edge independence. For $\rX_{ab}$, note that the terms irrelevant to $a$ and $b$ cancel out, since $\rvy$ and $\widetilde{\rvy}$ are the same except $a, b$. Then for each $e\in \gE^{(r)}_{\ell}(a)$ and $e\supset \{a, b\}$,
\begin{align*}
    &\, \sum\limits_{\{i, j\}\subset e } (\ervy_{i} \ervy_{j} - \widetilde{\ervy}_{i} \widetilde{\ervy}_{j}) \\
    =&\, (\ervy_{a}  \ervy_{b}  - \widetilde{\ervy}_{a} \widetilde{\ervy}_{b} ) + \sum\limits_{\{i, j\}\subset e \setminus \{a, b\}} (\ervy_{i} \ervy_{j} - \widetilde{\ervy}_{i} \widetilde{\ervy}_{j}) \\
    =&\, (\ervy_{a}  \ervy_{b}  - \widetilde{\ervy}_{a} \widetilde{\ervy}_{b} ) + \ervy_{a}  \sum\limits_{j\in e\setminus \{a, b\}} \ervy_{j} - \widetilde{\ervy}_{a}  \sum\limits_{j\in e\setminus \{a, b\}} \widetilde{\ervy}_{j} + \ervy_{b}  \sum\limits_{j\in e\setminus \{a, b\}} \ervy_{j} - \widetilde{\ervy}_{b}  \sum\limits_{j\in e\setminus \{a, b\}} \widetilde{\ervy}_{j} \\
    =&\,(\ervy_{a}  \ervy_{b}  - \widetilde{\ervy}_{a} \widetilde{\ervy}_{b} ) + 2\ervy_{a}  \sum\limits_{i\in e\setminus \{a, b\}} \ervy_{i}  + 2\ervy_{b}  \sum\limits_{j\in e\setminus \{a, b\}} \ervy_{j} \\
    =&\, 0 + 2\ervy_{a}  \big(\ell - 2 - 2r \big) + 2\ervy_{b}  \big( \ell - 2 - 2r \big) = 0.
\end{align*}
where in the last line, we used the facts that $\ervy_{a} \ervy_{b} = \widetilde{\ervy}_{a} \widetilde{\ervy}_{b}$ and $\ervy_{a} + \ervy_{b}=0$.

For $\rX_{a}$, the contributions of the terms irrelevant to $a$ cancels, since $\rvy$ and $\widetilde{\rvy}$ are the same except $a, b$. If $e\in \gE^{(r)}_{\ell}(a)$, then
\begin{align}
    \sum\limits_{\{i, j\}\subset e } (\ervy_{i} \ervy_{j} - \widetilde{\ervy}_{i} \widetilde{\ervy}_{j}) = 2\ervy_{a}  \sum\limits_{i\in e\setminus \{a\}} \ervy_{i} = 2\ervy_{a}  \big(\ell - 1 - 2r \big).\notag
\end{align}
The same argument holds for $\rX_{b}$ similarly. Denote $\rF_{v} \coloneqq \mathbbm{1}\{\rX_v \leq 0\}$, then $\{ \rF_{a} = 1 \} \cap \{ \rF_{b} = 1\} \iff g(\rvy) \leq g(\widetilde{\rvy})$, and it suffices to show that $\{ \rF_{a} = 1 \}$ happens with high probability. However, $\rF_{a}$ and $\rF_{a^{\prime}}$ are not independent for distinct $a, a' \in \gV_{+}$ due to the existence of edges containing $a, a'$ simultaneously. Following the same strategy in \Cref{app:impossibility_binary}, let $\gU \subset \gV$ be a subset of vertces of size $\gamma_{N} N$ where $|\gU \cap \gV_{+}| = |\gU \cap \gV_{-}| = \gamma_{N} N/2$ with $\gamma_{N} = q_{N}^{-3}$ and denote $\gW_{\ell}^{(\geq 2)} \coloneqq \{e\in [\gV]^{\ell} \mid e \textnormal{ contains at least 2 vertices in } \gU\} \subset \gE_{\ell}$. We define
\begin{align}
    \rY_{a} =&\, \sum\limits_{\ell \in \sL} \sum\limits_{\substack{e \cap \gU= \{a\} \\ e\not\ni b}} \etA^{(\ell)}_{e} \sum\limits_{\{i, j\}\subset e } (\ervy_{i} \ervy_{j} - \widetilde{\ervy}_{i} \widetilde{\ervy}_{j}),\quad \rZ_{a} = \sum\limits_{\ell \in \sL} \sum\limits_{\substack{e\in \gW_{\ell}^{(\geq 2)} \\e\ni a,\, e\not\ni b}} \etA^{(\ell)}_{e} \sum\limits_{\{i, j\}\subset e } (\ervy_{i} \ervy_{j} - \widetilde{\ervy}_{i} \widetilde{\ervy}_{j}),\notag\\
    \rY_b =&\, \sum\limits_{\ell \in \sL} \sum\limits_{\substack{e \cap \gU= \{b\} \\ e\not\ni a}} \etA^{(\ell)}_{e} \sum\limits_{\{i, j\}\subset e } (\ervy_{i} \ervy_{j} - \widetilde{\ervy}_{i} \widetilde{\ervy}_{j}),\quad  \rZ_b = \sum\limits_{\ell \in \sL} \sum\limits_{\substack{e\in \gW_{\ell}^{(\geq 2)} \\e\ni b,\, e\not\ni a}} \etA^{(\ell)}_{e} \sum\limits_{\{i, j\}\subset e } (\ervy_{i} \ervy_{j} - \widetilde{\ervy}_{i} \widetilde{\ervy}_{j}).\notag
\end{align}
Note that $\rY_{a}, \rY_b, \rZ_{a}, \rZ_b$ are pairwise independent since there is no common edges. Denote events
\begin{align*}
    \rA_{a} =&\, \indi{\rY_{a} \leq -\zeta_{N} q_{N}}, \quad \rJ_{a} = \indi{\rZ_{a} \leq \zeta_{N} q_{N}},\\
    \rB_{b}=&\, \indi{\rY_b \leq -\zeta_{N} q_{N}}, \quad \rJ_{b} = \indi{\rZ_b \leq \zeta_{N} q_{N}},
\end{align*}
where we take $\zeta = (\log q_{N})^{-1}$. Obviously, 
\begin{align*}
    \{ \rA_{a} = 1\} \cap \{\rJ_{a} = 1\} \implies \{\rF_{a} = 1\}, \quad \{ \rB_{b}= 1\} \cap \{\rJ_{b} = 1\} \implies \{\rF_{b} = 1\}.
\end{align*}
Let $\D_{\mathrm{AM}} = 1 - \epsilon$ for some absolute $\epsilon >0$ and $q_{N} = \log(N)$. With proofs deferred later, we have the following inequalities
\begin{align}
    \prod_{a\in \gV_{+}\cap \gU} \P( \rA_{a} = 0) \leq \exp\Big( -\frac{1}{2}N^{\epsilon + o(1)} \Big), \quad \P(\rJ_{a} = 0) \leq N^{-3 + o(1)}.\label{eqn:SDPLDP}
\end{align}
Then the probability of SDP failing exact recovery is lower bounded by
\begin{align*}
    &\, \P_{\mathrm{fail}} \\
    = &\,\P(\widehat{\Sigma} \neq \rvy \rvy^{\sT}) \geq \P(\widehat{\rvy}_{\mathrm{MB}} \neq \pm \rvy) = \P\big( \exists \widetilde{\rvy} \textnormal{ s.t. } g(\widetilde{\rvy}) \geq g(\rvy) \big) \\
    \geq &\, \P\big( \exists a\in \gV_{+},\, b\in \gV_{-} \textnormal{ s.t. } \{ \rF_{a} = 1 \} \cap \{ \rF_{b} = 1 \} \big) = \P\Big( \cup_{a\in \gV_{+} } \{ \rF_{a} = 1 \} \bigcap \cup_{b\in \gV_{-}} \{ \rF_{b} = 1 \} \Big) \\
    =&\, \P\Big( \cup_{a\in \gV_{+} } \{ \rF_{a} = 1 \} \bigcap \cup_{b\in \gV_{-}} \{ \rF_{b} = 1 \} \big| \{ \rJ_{a} = 1 \} \cap \{\rJ_{b} = 1\} \Big) \cdot \P(\{ \rJ_{a} = 1 \} \cap \{\rJ_{b} = 1\} )\\
    \geq &\, \P\Big( \cup_{a\in \gV_{+}\cap \gU} \{ \rA_{a} = 1 \} \bigcap \cup_{b\in \gV_{-} \cap \gU} \{ \rB_{b}= 1 \} \big| \{ \rJ_{a} = 1 \} \cap \{\rJ_{b} = 1\} \Big) \cdot \P(\{ \rJ_{a} = 1\} \cap \{ \rJ_{b} = 1\})\\
    \geq &\, \P\big( \cup_{a\in \gV_{+}\cap \gU} \{ \rA_{a} = 1 \} \big) \cdot \P\big( \cup_{b\in \gV_{-}\cap \sU} \{ \rB_{b}= 1 \} \big) \cdot \P(\rJ_{a} = 1) \cdot \P(\rJ_{b} = 1)\\
    = &\, \Big( 1 - \prod_{a\in \gV_{+}\cap \gU} \P(\rA_{a} = 0 ) \Big) \cdot \Big( 1 - \prod_{b\in \gV_{-}\cap \gU} \P (\rB_{b}= 0 ) \Big) \cdot \Big( 1 - \P(\rJ_{a} = 0) \Big) \cdot \Big( 1 - \P(\rJ_{b} = 0) \Big)\\
    \geq &\, 1 - 2\exp\Big( -\frac{N^{\epsilon + o(1)} }{2}\Big) - 2N^{-3 + o(1)} \\
    &\, \quad \quad - 2N^{-3 + o(1)}\cdot \exp( -N^{\epsilon + o(1)}) - 2N^{-6 + o(1)}\cdot \exp\Big( -\frac{N^{\epsilon + o(1)}}{2}\Big).
    %&\, \overset{N \to \infty}{\to} 1.
\end{align*}
Therefore, as $N$ goes to infinity, with probability tending to $1$, SDP \eqref{eqn:SDP} fails exact recovery when $\D_{\mathrm{AM}} < 1$.

\begin{proof}[Proof of \eqref{eqn:SDPLDP}]
Note that $\rY_{a}$ admits the decomposition
\begin{align}
     \rY_{a} = \sum\limits_{\ell \in \sL}\sum\limits_{r=0}^{\ell - 1} c^{(\ell)}_r \rY^{(\ell)}_r, \notag
\end{align}
where $c^{(\ell)}_r = 2\ervy_{a}  (\ell - 1 - 2r)$ and $\rY^{(\ell)}_r \sim \mathrm{Bin}(N^{(\ell)}_r, p^{(\ell)}_r)$ with $p^{(\ell)}_r = \alpha_{\ell}$ if $r = 0$, otherwise $p^{(\ell)}_r = \beta_{\ell}$, and
\begin{align}
    N^{(\ell)}_r = \binom{\frac{N}{2} - \frac{\gamma_{N} N}{2} - 1}{r} \binom{\frac{N}{2} - \frac{\gamma_{N} N}{2} - 1}{\ell - 1-r} = \big( 1 + o(1) \big) \cdot \frac{1}{2^{\ell - 1}} \binom{\ell - 1}{r} \binom{N - 1}{\ell - 1},\notag
\end{align}
Apply Lemma ~\ref{lem:LDP_binomial} with $g_{N} = 1$, $\delta = 0$, $\rho^{(\ell)}_r = \frac{1}{2^{\ell - 1}}\binom{\ell - 1}{r}$, and $\alpha^{(\ell)}_r = a_{\ell}$ if $r = 0$, otherwise $\alpha^{(\ell)}_r = b_{\ell}$. We then have
\begin{align}
    \P( \rA_{a} = 1) = \exp\big( -(1 + o(1))\cdot \D \cdot q_{N} \big),\notag
\end{align}
where the rate function $\D$ there becomes 
\begin{align*}
    \D = &\, \max_{t\geq 0} \Big(-t\delta + \sum\limits_{\ell \in \sL} \sum\limits_{r=0}^{\ell - 1} \alpha^{(\ell)}_r \rho^{(\ell)}_r (1 - e^{-c^{(\ell)}_r t}) \Big) \\
   = &\, \max_{t\geq 0} \sum\limits_{\ell \in \sL}\frac{1}{2^{\ell - 1}} \Big[ a_{\ell}\big(1 - e^{-(\ell - 1)2t} \big) + b_{\ell} \sum\limits_{r=1}^{\ell - 1} \binom{\ell - 1}{r} \big(1 - e^{-(\ell - 1 - 2r)2t} \big) \Big]\\
   =&\, \D_{\mathrm{AM}} \textnormal{  in  } \eqref{eqn:DAM_binary}.
\end{align*}
We take $q_{N} = \log(N)$. Since $\D_{\mathrm{AM}} = 1 - \epsilon$ for some absolute $\epsilon >0$, by applying $1 - x \leq e^{-x}$, we have
\begin{align}
    \prod_{a\in \gV_{+} } \P( \rA_{a} = 0) =&\, \prod_{a \in \gV_{+}} \Big( 1 - \P( \rA_{a} = 1)\Big) \leq (1 - N^{- (1 + o(1))\cdot \D_{\mathrm{AM}}})^{\frac{N}{2}} \notag\\
    \leq &\, \exp\Big( -\frac{1}{2}N^{1 - (1 + o(1))\cdot \D_{\mathrm{AM}}} \Big) = \exp\Big( -\frac{1}{2}N^{\epsilon + o(1)} \Big).\notag
\end{align}
The bound for $\P(\rJ_{a} = 0)$ could be obtained similarly by following the same argument in Lemma ~\ref{lem:Wab_J_prob}.
\end{proof}

\subsection{Sufficiency}
The proof strategy for sufficiency follows the same idea as \cite{Hajek2016AchievingEC}. For some $\nu \in \R$, define
\begin{align}
    \rmS = \rmD - \rmA + \nu \ones \ones^{\sT}\notag
\end{align}

\begin{lemma}[{\cite[Lemma 3]{Hajek2016AchievingEC}}]
    Suppose that there exists some diagonal matrix $\rmD\in \R^{N \times N}$ and $\nu \in \R$ such that $\lambda_{N-1}(\rmS) >0$, $\rmS \rvy = \bzero$ and $\rmS$ being positive semidefinite, then $\widehat{\Sigma} = \rvy \rvy^{\sT}$ is the unique solution to \eqref{eqn:SDP}.
\end{lemma}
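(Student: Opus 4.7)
The plan is to run a standard dual-certificate / complementary-slackness argument, treating $\bS$ as the dual feasible witness for $\bsigma\bsigma^{\sT}$ in the SDP \eqref{eqn:SDP}. The three hypotheses ($\bS\succeq 0$, $\bS\bsigma=\mathbf 0$, $\lambda_{n-1}(\bS)>0$) correspond respectively to dual feasibility, complementary slackness at $\bsigma\bsigma^{\sT}$, and the rank condition that forces uniqueness.

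First I would establish the key identity: for every feasible $\bX$ of \eqref{eqn:SDP},
\[
\langle \bS,\bX\rangle \;=\; \langle \bD,\bX\rangle - \langle \bA,\bX\rangle + \nu\langle \ones\ones^{\sT},\bX\rangle \;=\; \operatorname{tr}(\bD) - \langle \bA,\bX\rangle,
\]
where I used $X_{ii}=1$ for all $i$ and $\langle \ones\ones^{\sT},\bX\rangle=0$. Specializing to $\bX = \bsigma\bsigma^{\sT}$, which is feasible because $\sigma_i^2=1$ and $\ones^{\sT}\bsigma=0$, and invoking $\bS\bsigma=\mathbf 0$, gives $\langle\bS,\bsigma\bsigma^{\sT}\rangle=\bsigma^{\sT}\bS\bsigma=0$, hence $\langle\bA,\bsigma\bsigma^{\sT}\rangle=\operatorname{tr}(\bD)$. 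Subtracting the two equalities,
\[
\langle \bA,\bsigma\bsigma^{\sT}\rangle - \langle \bA,\bX\rangle \;=\; \langle \bS,\bX\rangle.
\]

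Next, optimality is immediate: since $\bS\succeq 0$ and $\bX\succeq 0$, we have $\langle\bS,\bX\rangle\geq 0$, so $\langle\bA,\bX\rangle\leq\langle\bA,\bsigma\bsigma^{\sT}\rangle$ for every feasible $\bX$. Thus $\widehat{\mathbf\Sigma}=\bsigma\bsigma^{\sT}$ attains the maximum.

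For uniqueness, suppose some feasible $\bX^{\star}$ also attains the maximum. Then the displayed identity forces $\langle\bS,\bX^{\star}\rangle=0$. Write $\bS=\bS^{1/2}\bS^{1/2}$ and $\bX^{\star}=(\bX^{\star})^{1/2}(\bX^{\star})^{1/2}$; then $0=\langle \bS,\bX^{\star}\rangle=\|\bS^{1/2}(\bX^{\star})^{1/2}\|_{\frob}^{2}$, so $\bS\bX^{\star}=\mathbf 0$. Consequently, every column of $\bX^{\star}$ lies in the null space of $\bS$. The condition $\lambda_{n-1}(\bS)>0$ (with eigenvalues sorted in decreasing order) together with $\bS\bsigma=\mathbf 0$ means $\bS$ has exactly a one-dimensional kernel, spanned by $\bsigma$. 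Hence $\bX^{\star}=c\,\bsigma\bsigma^{\sT}$ for some scalar, and $c\geq 0$ by $\bX^{\star}\succeq 0$. The constraint $X^{\star}_{ii}=1=\sigma_i^2$ pins down $c=1$, giving $\bX^{\star}=\bsigma\bsigma^{\sT}$.

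The argument is largely algebraic and has no real obstacle; the only point that deserves care is the passage from $\langle\bS,\bX^{\star}\rangle=0$ to $\bS\bX^{\star}=\mathbf 0$ (which requires both matrices to be PSD, a standard fact worth stating cleanly), and the identification of the kernel of $\bS$ as one-dimensional, which is exactly what the spectral hypothesis $\lambda_{n-1}(\bS)>0$ provides.
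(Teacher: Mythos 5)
Your dual-certificate argument is correct and is essentially the standard proof of this lemma, which the paper itself does not reprove but imports directly from Hajek--Wu--Xu (their Lemma 3 is established by exactly this computation: $\langle \bA,\bsigma\bsigma^{\sT}\rangle-\langle\bA,\bX\rangle=\langle\bS,\bX\rangle\geq 0$ for feasible $\bX$, then uniqueness from $\lambda_{n-1}(\bS)>0$ and $\bS\bsigma=\mathbf{0}$). The only step worth writing out is the passage from ``every column of $\bX^{\star}$ lies in $\ker\bS=\mathrm{span}(\bsigma)$'' to $\bX^{\star}=c\,\bsigma\bsigma^{\sT}$, which needs the symmetry of $\bX^{\star}$ (from $\bX^{\star}=\bsigma\bb^{\sT}$ symmetric one gets $\bb\propto\bsigma$); with that noted, the proof is complete.
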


For each $v\in \gV$, define
\begin{align}
    \rD_{v} = \sum\limits_{j=1}^{|\gV|}\ermA_{vj}\ervy_{v} \ervy_{j}.\notag
\end{align}
Then $\rD_{v} \ervy_{v} = \sum\limits_{j=1}^{|\gV|}\ermA_{vj} \ervy_{j}$, leading to $\rmD\rvy = \rmA \rvy$. Consequently, $\rmS \rvy = \bzero$ since $\ones^{\sT}\rvy = 0$. Then it suffices to verify that $\rmS$ is positive definite in the subspace orthogonal to $\rvy$ with high probability, i.e., 
\begin{align}
    \P\Big( \inf_{\rvx\perp \rvy, \|\rvx\| = 1} \rvx^{\sT}\rmS \rvx >0\Big) \geq 1 - N^{-\Omega(1)}.\notag
\end{align}
Note that $\rmA^{\star} = \E[\rmA] = \frac{\alpha + \beta}{2} \ones \ones^{\sT} + \frac{\alpha - \beta}{2}\rvy\rvy^{\sT} - \alpha\rmI_{N}$ with $\alpha, \beta$ defined in \eqref{eqn:alpha_beta_def}, then one has
\begin{align}
   &\,\rvx^{\sT}\rmS \rvx = \rvx^{\sT}\rmD\rvx - \frac{\alpha - \beta}{2}\rvx^{\sT} \rvy \rvy^{\sT} \rvx + \Big( \nu - \frac{\alpha + \beta}{2}\Big) \rvx^{\sT} \ones \ones^{\sT} \rvx + \alpha - \rvx^{\sT}(\rmA - \rmA^{\star})\rvx \notag \\
   \geq &\,\rvx^{\sT}\rmD\rvx + \alpha - \rvx^{\sT}(\rmA - \rmA^{\star})\rvx \geq \min_{v\in \gV} \rD_{v} + \alpha - \|\rmA - \rmA^{\star}\|,\notag
\end{align}
where the second to last inequality holds by choosing some proper $\nu \geq \frac{\alpha + \beta}{2}$. Note that $\|\rmA - \rmA^{\star}\| \lesssim \sqrt{\rho_{N}}$ by \Cref{thm:concentration}, then the lower bound above is determined by $\min_{v\in \gV} \rD_{v}$. Following the same strategy in the proof of \Cref{thm:achievability_matrices}, one could do the following decomposition
\begin{align}
    &\, \sqrt{N} \ervy_{v} \rmA \rvy = \ervy_{v} \sum\limits_{j\neq v, \,\, \ervy_{j} = +1 } \ermA_{vj} - \ervy_{v}\sum\limits_{j\neq v,\,\, \ervy_{j} = -1} \ermA_{vj} =\sum\limits_{j\neq v, \,\, \ervy_{j} = \ervy_{v} } \ermA_{vj} - \sum\limits_{j\neq v,\,\, \ervy_{j} \neq \ervy_{v}} \ermA_{vj} \notag\\
        =&\, \sum\limits_{\ell \in \sL} \sum\limits_{e\in \gE^{(0)}_{\ell}(v)} (\ell - 1)\cdot \tA^{(\ell)}_e - \sum\limits_{e\in \gE^{(\ell - 1)}_{\ell}(v)} (\ell - 1)\cdot \tA^{(\ell)}_e \notag\\
        &\, + \sum\limits_{\ell \in \sL}\sum\limits_{r=1}^{\ell - 2}\sum\limits_{e\in \gE^{(r)}_{\ell}} \Big( (\ell - 1-r) \cdot \etA^{(\ell)}_{e} - r \cdot \etA^{(\ell)}_{e} \Big), \label{eqn:extraTermsSDP}
\end{align}
where the contribution of \eqref{eqn:extraTermsSDP} does not vanish without the weighting trick, which impairs the way of obtaining the correct threshold $\D_{\mathrm{GH}}$. By an simple application of Lemma ~\ref{lem:LDP_binomial}, one could prove that $\P(\sqrt{N} \ervy_{v} \rmA \rvy > \delta \rho_{N}) \geq 1 - N^{-\D_{\mathrm{AM}} + \delta/2}$ when $\D_{\mathrm{AM}} = 1 + \delta$ for some $\delta >0$. Therefore, Algorithm \ref{alg:SDP} achieves exact recovery with high probability when $\D_{\mathrm{AM}} >1$.

\section{Technical Lemmas}

\begin{lemma}\label{lem:stirling}
    For integers $n, k \geq 1$, we have
    \begin{align*}
        \log(n!) = &\, n\log(n) - n + \frac{1}{2}\log(2\pi n) + O(n^{-1})\\
        \log \binom{n}{k} =&\, \frac{1}{2}\log \frac{n}{2\pi k (n-k)} + n\log(n) - k\log(k) - (n-k) \log(n-k) + O\Big( \frac{1}{n} + \frac{1}{k} + \frac{1}{n-k} \Big)\\
        \log \binom{n}{k} =&\, k\log\Big( \frac{n}{k} - 1 \Big) -\frac{1}{2}\log(2\pi k) + o(k^{-1})\,, \textnormal{ for } k = \omega(1) \textnormal{ and } \frac{k}{n} = o(1).
    \end{align*}
    Moreover, for any $1\leq k \leq \sqrt{n}$, we have
    \begin{align*}
        \frac{n^{k}}{4 \cdot k!} \leq \binom{n}{k} \leq \frac{n^{k}}{k!}\,,\quad \log \binom{n}{k} \geq k\log \Big(\frac{en}{k} \Big) - \frac{1}{2}\log(k)  - \frac{1}{12k} - \log(4\sqrt{2\pi})\,.
    \end{align*}
\end{lemma}
\begin{proof}[Proof of Lemma \ref{lem:stirling}]
According to Stirling's series \cite{Diaconis1986AnEP}, for any $n \geq 1$, 
\begin{align*}
    \sqrt{2\pi n}\, \left(\frac{n}{e}\right)^{n}e^{\frac{1}{12n+1}} n! \sqrt{2\pi n}\, \left(\frac{n}{e}\right)^{n}e^{\frac{1}{12n}}.
\end{align*}
Then the following asymptotic expansion holds:
\begin{align*}
    \Big| \log(n!) - \Big(\frac{1}{2} \log(2\pi n) + n\log(n) - n \Big) \Big|\leq \frac{1}{12n}.
\end{align*}
Thus for any $k \geq 1$, we have $\log \binom{n}{k} = \log(n!) - \log(k!) - \log((n-k)!)$, as well as
\begin{align*}
    \Big| \log \binom{n}{k} - \Big( \frac{1}{2}\log \frac{n}{2\pi k (n-k)} + n\log(n) - k\log(k) - (n-k) \log(n-k) \Big) \Big| \leq \frac{1}{12} \Big( \frac{1}{n} + \frac{1}{k} + \frac{1}{n-k} \Big).
\end{align*}
When $k = \omega(1)$ but $\frac{k}{n} = o(1)$, we have $\log(1 - \frac{k}{n}) = - \frac{k}{n} + O(N^{-2})$, and 
\begin{align*}
    \log \binom{n}{k} =&\, -\frac{1}{2}\log(2\pi k) - \frac{1}{2}\log\Big(1 - \frac{k}{n} \Big) + n\log(n) - k\log(k) \\
    &\, \quad - (n-k) \log(n) - (n-k)\log\Big(1 - \frac{k}{n} \Big) + o(1)\\
    =&\, k\log(n) - k\log(k) + k + o(k).
\end{align*}
At the same time, we write
    \begin{align*}
        \binom{n}{k} = \frac{n(n-1)\cdots (n - k + 1)}{k!} = \frac{n^{k}}{k!} \cdot \Big(1 - \frac{1}{n} \Big)\Big(1 - \frac{2}{n} \Big) \cdots \Big(1 - \frac{k-1}{n} \Big) \geq \frac{n^{k}}{k!}\Big(1 - \frac{k-1}{n} \Big)^{k-1} 
    \end{align*}
The upper bound is trivial. For the lower bound, let $f(x) = (1 - x/n)^{x}$ denote the function. It is easy to see that $f(x)$ is decreasing with respect to $x$, and 
\begin{align*}
    \Big(1 - \frac{k-1}{n} \Big)^{k-1} \geq \Big(1 - \frac{1}{\sqrt{n}} \Big)^{\sqrt{n}}\,.
\end{align*}
Let $g(t) = (1 - 1/t)^{t}$ and we know that $g(t)$ is increasing when $t \geq 1$, hence $g(t) \geq g(2) = 1/4$ for any $t \geq 2$ and the desired lower bound follows. Then for any $k\leq \sqrt{n}$, we have
\begin{align*}
    \log \binom{n}{k} \geq &\,\log \frac{n^{k}}{4 \cdot k!} \\
    \geq &\, \log \left( \frac{n^{k}}{4 \cdot \sqrt{2\pi k}\ (\frac{k}{e})^{k}e^{\frac {1}{12k}}} \right) = k\log \Big(\frac{en}{k} \Big) - \frac{1}{2}\log(k)  - \frac{1}{12k} - \log(4\sqrt{2\pi})\,.
\end{align*}
\end{proof}

\begin{lemma}[Weyl's inequality, \cite{Weyl1912DasAV}]\label{lem:weyl}
Let $\rmA, \rmE \in \R^{m \times n}$ be two real $m\times n$ matrices, then $|\sigma_i(\rmA + \rmE) - \sigma_i(\rmA)| \leq \|\rmE\|$ for every $1 \leq i \leq \min\{ m, n\}$. Furthermore, if $m = n$ and $\rmA, \rmE \in \R^{n \times n}$ are real symmetric, then $|\lambda_i(\rmA + \rmE) - \lambda_i(\rmA)| \leq \|\rmE\|$ for all $1 \leq i \leq n$.
\end{lemma}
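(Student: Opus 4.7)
The plan is to derive both statements from variational (Courant–Fischer style) characterizations of eigenvalues and singular values, applied uniformly to the perturbed and unperturbed matrices. The key observation in both cases is that for any unit vector $x$, the perturbation quantity is controlled by the operator norm: $|x^{\sT}\bE x| \leq \|\bE\|$ in the symmetric case and $\|\bE x\|_2 \leq \|\bE\|$ in the general case. These two pointwise estimates are what drive the inequalities.

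First I would treat the symmetric case. For an $n \times n$ real symmetric matrix $\mathbf{M}$ with eigenvalues $\lambda_1(\mathbf{M}) \geq \cdots \geq \lambda_n(\mathbf{M})$, the Courant–Fischer max–min formula reads
\begin{align*}
    \lambda_i(\mathbf{M}) = \max_{\substack{S \subseteq \R^n \\ \dim S = i}} \min_{\substack{x \in S \\ \|x\|_2 = 1}} x^{\sT}\mathbf{M}\, x.
\end{align*}
For any $i$-dimensional subspace $S$ and unit $x \in S$, we have $x^{\sT}(\bA+\bE)x \geq x^{\sT}\bA x - \|\bE\|$. Taking the minimum over unit $x \in S$ and then the maximum over $S$ yields $\lambda_i(\bA+\bE) \geq \lambda_i(\bA) - \|\bE\|$. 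Swapping the roles of $\bA$ and $\bA+\bE$ (equivalently, applying the argument to the pair $(\bA+\bE, -\bE)$) gives $\lambda_i(\bA) \geq \lambda_i(\bA+\bE) - \|\bE\|$, and combining the two bounds finishes this case.

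For the singular value case, the argument is essentially identical once one invokes the analogous min–max characterization
\begin{align*}
    \sigma_i(\bA) = \max_{\substack{S \subseteq \R^n \\ \dim S = i}} \min_{\substack{x \in S \\ \|x\|_2 = 1}} \|\bA x\|_2,
\end{align*}
valid for $1 \leq i \leq \min\{m,n\}$ and obtained directly from the SVD $\bA = \bU \bSigma \bV^{\sT}$. By the reverse triangle inequality, $\|(\bA+\bE)x\|_2 \geq \|\bA x\|_2 - \|\bE x\|_2 \geq \|\bA x\|_2 - \|\bE\|$ for any unit $x$, so repeating the min-over-$x$, max-over-$S$ operations gives $\sigma_i(\bA+\bE) \geq \sigma_i(\bA) - \|\bE\|$. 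The reverse inequality follows by interchanging $\bA$ and $\bA+\bE$ exactly as in the symmetric case. (Alternatively, one could reduce the singular value statement to the symmetric one via the Jordan–Wielandt dilation $\widetilde{\bA} = \bigl[\begin{smallmatrix} 0 & \bA \\ \bA^{\sT} & 0 \end{smallmatrix}\bigr]$, whose nonzero eigenvalues are $\pm \sigma_i(\bA)$ and whose operator norm equals $\|\bA\|$, but this feels unnecessary given that the direct min–max argument is equally short.)

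There is no real obstacle: the entire proof is structural and rests on the two min–max identities together with the elementary pointwise perturbation bounds. The only item worth spelling out is the singular value min–max formula itself, which is a routine consequence of the SVD — the maximum is attained on the span of the top $i$ right singular vectors, and no $i$-dimensional subspace can do better by a dimension-counting argument mirroring the symmetric Courant–Fischer proof.
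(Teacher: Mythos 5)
Your proof is correct and is the standard textbook derivation of Weyl's inequality via the Courant--Fischer min--max characterizations for eigenvalues and singular values; the paper itself states this lemma without proof as a classical result, so there is no in-paper argument to compare against. Both halves of your argument are sound: the pointwise bounds $|x^{\sT}\bE x| \leq \|\bE\|$ and $\|\bE x\|_2 \leq \|\bE\|$ for unit $x$ pass cleanly through the $\min$-over-$x$, $\max$-over-$S$ operations, and the symmetry of the roles of $\bA$ and $\bA+\bE$ gives the two-sided bound.
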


\begin{lemma}[Eckart–Young–Mirsky Theorem \cite{Eckart1936TheAO}]\label{lem:EYM}
Suppose that the matrix $\rmA \in \R^{m \times n}$ ($m\leq n$) adapts the singular value decomposition $\rmA = \rmU \mSigma \rmV^{\sT}$, where $\rmU = [\rvu_1, \ldots, \rvu_{m}] \in \R^{m\times m}$, $\rmV = [\rvv_1, \ldots, \rvv_{n}] \in \R^{n \times n}$ and $\mSigma \in \R^{m\times n}$ contains diagonal elements $\sigma_1 \geq \ldots \geq \sigma_{m}$. Let $\rmA^{(K)} = \sum_{k=1}^{K} \sigma_k \rvu_k \rvv_k ^{\sT}$ be the rank-$K$ approximation of $\rmA$. Then $\|\rmA - \rmA^{(K)}\|_2 = \sigma_{K+1} \leq \|\rmA - \rmB\|_2$ for any matrix $\rmB$ with $\mathrm{rank}(\rmB) = K$.
\end{lemma}

\begin{lemma}[Davis-Kahan Theorem, \cite{Davis1970TheRO, Yu2015UsefulVO}]\label{lem:DavisKahan}
    Let $\rmA$ and $\rmA^{\star}$ be $n\times n$ symmetric matrices with eigenpairs $\{(\lambda_i, \rvu_i)\}_{i=1}^{|\gV|}$ and $\{(\lambda^{\star}_i, \rvu^{\star}_i)\}_{i=1}^{|\gV|}$ respectively, where eigenvalues are sorted in the decreasing order. We take $\lambda_0 \coloneqq +\infty$ and $\lambda_{n+1} \coloneqq -\infty$. For fixed $j\in \gV$, assume that $\delta = \min\{\lambda_{j-1}-\lambda_{j}, \lambda_{j}-\lambda_{j+1}\} > 0$. Then for eigenpairs of $(\lambda_{j}, \rvu_{j})$ of $\rmA$ and $(\lambda^{\star}_{j}, \rvu^{\star}_{j})$ of $\rmA^{\star}$, if $\<\rvu_{j}, \rvu^{\star}_{j}\> \geq 0$, one has
    \begin{align*}
        \|\rvu_{j} - \rvu^{\star}_{j} \|_2 \leq \frac{\sqrt{8} \|\rmA -\rmA^{\star}\|}{\delta}.
    \end{align*}
\end{lemma}

\begin{lemma}[Generalized Davis-Kahan Theorem, \cite{Eisenstat1998RelativePR, Deng2021StrongCG}]\label{lem:generalized_Davis_Kahan}
    Consider the generalized eigenvalue problem $\rmA \rvu = \lambda \rmD \rvu$ where $\rmA\in \R^{n\times n}$ being symmetric and $\rmD \in \R^{n\times n}$ being diagonal. Let $\rmD^{-1} \in \R^{n \times n}$ denote the Moore-Penrose inverse of $\rmD$, that is $(\rmD^{-1})_{vv} = (\rD_{v})^{-1}$ if $\rD_{v} \neq 0$, otherwise $(\rmD^{-1})_{vv} = 0$. Then the matrix $\rmD^{-1}\rmA \in \R^{n \times n}$ is diagonalizable, which admits the decomposition
    \begin{align*}
        \rmD^{-1}\rmA = \sum\limits_{i=1}^{|\gV|}\lambda_i \rvu_i \rvu_i^\sT = \begin{bmatrix}
                \rmU & \rmU_{\perp}
            \end{bmatrix}
            \begin{bmatrix}
              \Lambda & \bzero\\
                \bzero & \Lambda_{\perp}
            \end{bmatrix}
            \begin{bmatrix}
               \rmU^{\sT}\\
                \rmU_{\perp}^{\sT}
            \end{bmatrix}
    \end{align*}
    Let $\rmP = \rmU_{\perp} \rmU_{\perp}^{\sT} = \rmI - \rmU \rmU^{\sT}$ denote the projection onto the subspace orthogonal to the space spanned by columns of $\rmU$. Let $\delta = \min_{i}|(\Lambda_{\perp})_{ii} - \widehat{\lambda} |$ denote the absolute separation of some $\widehat{\lambda}$ from $\Lambda_{\perp}$, then for any vector $\widehat{\rvu}$, 
    \begin{align*}
        \|\rmP \widehat{\rvu}\| \leq \frac{\|(\rmD^{-1} \rmA - \widehat{\lambda} \rmI)\widehat{\rvu}\| }{\delta}\,\,.
    \end{align*}
\end{lemma}
\begin{proof}[Proof of Lemma \ref{lem:generalized_Davis_Kahan}]
    First, note that
    \begin{align*}
        (\rmD^{-1} \rmA - \widehat{\lambda} \rmI)\widehat{\rvu} = \begin{bmatrix}
                \rmU & \rmU_{\perp}
            \end{bmatrix}
            \begin{bmatrix}
              \Lambda - \widehat{\lambda} \rmI & \bzero\\
                \bzero & \Lambda_{\perp} - \widehat{\lambda} \rmI
            \end{bmatrix}
            \begin{bmatrix}
               \rmU^{\sT} \widehat{\rvu}\\
                \rmU_{\perp}^{\sT} \widehat{\rvu},
            \end{bmatrix} 
    \end{align*}
    thus
    \begin{align*}
         \rmU_{\perp}^{\sT} \widehat{\rvu} = (\Lambda_{\perp} - \widehat{\lambda} \rmI)^{-1}\rmU_{\perp}^{\sT}(\rmD^{-1} \rmA - \widehat{\lambda} \rmI)\widehat{\rvu} 
    \end{align*}
    Note that columns of $\rmU_{\perp}$ are pairwise orthogonal, therefore 
    \begin{align*}
        \| \rmP\widehat{\rvu} \| = &\, \| \rmU_{\perp} \rmU_{\perp}^{\sT} \widehat{\rvu} \| = \| \rmU_{\perp}(\Lambda_{\perp} - \widehat{\lambda} \rmI)^{-1}\rmU_{\perp}^{\sT}(\rmD^{-1} \rmA - \widehat{\lambda} \rmI)\widehat{\rvu}\|\\
        \leq &\, \frac{\|\rmU_{\perp}\rmU_{\perp}^{\sT}\| \cdot \|(\rmD^{-1} \rmA - \widehat{\lambda} \rmI)\widehat{\rvu}\|}{\delta} \leq \frac{\|(\rmD^{-1} \rmA - \widehat{\lambda} \rmI)\widehat{\rvu}\|}{\delta}.
    \end{align*}
\end{proof}

\begin{lemma}[Markov's inequality, {\cite[Proposition $1.2.4$]{Vershynin2018HighDP}}]\label{lem:Markov}
    For any non-negative random variable $\rX$ and $t>0$, we have 
    \begin{align*}
        \P(\rX > t) \leq \E(\rX)/t.
    \end{align*}
\end{lemma}

\begin{lemma}[Hoeffding's inequality, {\cite[Theorem $2.2.6$]{Vershynin2018HighDP}}]\label{lem:Hoeffding}
    Let $\rX_1,\dots, \rX_{N}$ be independent  random variables with $\rX_i \in [a_i, b_i]$, then for any, $t \geq 0$, we have
    \begin{align*}
        \P\Bigg( \bigg|\sum_{i=1}^{N}(\rX_i - \E \rX_i ) \bigg| \geq t \Bigg) \leq 2\exp \Bigg( -\frac{2t^{2} }{ \sum_{i=1}^{N}(b_i - a_i )^{2} } \Bigg)\,.
    \end{align*}
\end{lemma}

\begin{lemma}[Chernoff's inequality, {\cite[Theorem $2.3.1$]{Vershynin2018HighDP}}]\label{lem:Chernoff}
    Let $\rX_i$ be independent Bernoulli random variables with parameters $p_i$. Consider their sum $\rS_{N} = \sum_{i=1}^{N}\rX_i$ and denote its mean by $\mu = \E \rS_{N}$. Then for any $t > \mu$, 
    \begin{align*}
        \P \big( \rS_{N} \geq t \big) \leq e^{-\mu} \left( \frac{e \mu}{t} \right)^{t}\,.
    \end{align*}
\end{lemma}

\begin{lemma}[Bernstein's inequality, {\cite[Theorem $2.8.4$]{Vershynin2018HighDP}}]\label{lem:Bernstein}
    Let $\rX_1,\dots, \rX_{N}$ be independent mean-zero random variables such that $|\rX_i|\leq K$ for all $i$. Let $\sigma^2 = \sum_{i=1}^{N}\E \rX_i^2$. Then for every $t \geq 0$,
    \begin{align*}
        \P \Bigg( \Big|\sum_{i=1}^{N} \rX_i \Big| \geq t \Bigg) \leq 2 \exp \Bigg( - \frac{t^2/2}{\sigma^2 + Kt/3} \Bigg)\,.
    \end{align*}
\end{lemma}

\begin{lemma}[Bennett's inequality, {\cite[Theorem $2.9.2$]{Vershynin2018HighDP} }]\label{lem:Bennett}
    Let $\rX_1,\dots, \rX_{N}$ be independent random variables. Assume that $|\rX_i - \E \rX_i| \leq K$ almost surely for every $i$. Then for any $t>0$, we have
    \begin{align*}
        \P \Bigg( \sum_{i=1}^{N} (\rX_i - \E \rX_i) \geq t \Bigg) \leq \exp \Bigg( - \frac{\sigma^2}{K^2} \cdot h \bigg( \frac{Kt}{\sigma^2} \bigg)\Bigg)\,, \notag 
    \end{align*}
    where $\sigma^2 = \sum_{i=1}^{N}\Var(\rX_i)$ is the variance of the sum and $h(u) := (1 + u)\log(1 + u) - u$.
\end{lemma}

\begin{lemma}[Variants of Chernoff]\label{lem:Chernoffvariant}
Let $\rX_i$ be independent Bernoulli random variable with mean $p_i$. Consider their sum $\rS_{N} \coloneqq \sum_{i=1}^{N}\rX_i$ and denote its mean by $\mu \coloneqq \E \rS_{N}$.
    \begin{enumerate}
        \item Let $\mu = \gamma \cdot q_{N}$ for some $\gamma >0$ and $q_{N} \gg 1$, then for any $\zeta > \gamma$, we have
            \begin{align*}
                \P(\rS_{N} \geq \zeta q_{N} ) \leq \exp\big( - q_{N} \cdot \big[ (\gamma - \zeta) + \zeta \log (\zeta/\gamma) \big] \big)
            \end{align*}
        Specifically, $\P(\rS_{N} \geq t ) \leq N^{-[\zeta \log \frac{\zeta}{\gamma}  +  ( \gamma - \zeta) ]}$ when $q_{N} = \log(N)$.
        \item If $\mu = \gamma_{N} \cdot q_{N}$ for some vanishing sequence $\{\gamma_{N}\}_{N\geq 1}$ with $\gamma_{N} = o(1)$, then for any vanishing sequence $\{\zeta_{N}\}_{ N \geq 1}$ with $\zeta_{N} = \omega(\gamma_{N})$, we have
            \begin{align*}
                \P(\rS_{N} \geq \zeta_{N} \cdot q_{N} ) \leq \exp(-q_{N}\cdot [\zeta_{N} \log \frac{\zeta_{N}}{\gamma_{N}}( 1  +  o(1)\, ) ]).
            \end{align*}
    \end{enumerate}
\end{lemma}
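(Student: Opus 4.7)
The plan is to apply the standard Chernoff method: use the exponential Markov inequality with a tilt parameter $t > 0$ and then optimize. For independent Bernoullis with $\sum_i p_i = \mu$,
\[
\P(S_n \geq x) \leq e^{-tx}\prod_{i=1}^n \E[e^{tX_i}] = e^{-tx}\prod_{i=1}^n \bigl(1 + p_i(e^t-1)\bigr) \leq \exp\bigl(-tx + \mu(e^t-1)\bigr),
\]
where the final inequality uses $1+y \leq e^y$. Minimizing the right-hand exponent in $t$ gives $t^\star = \log(x/\mu)$, admissible whenever $x > \mu$. Substituting back produces the master bound
\[
\P(S_n \geq x) \leq \exp\bigl(-x\log(x/\mu) + x - \mu\bigr).
\]

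For part~(1), I would substitute $x = \zeta\log(n)$ and $\mu = \gamma\log(n)$ with $\zeta > \gamma$; the exponent becomes $-\bigl[\zeta\log(\zeta/\gamma) + \gamma - \zeta\bigr]\log(n)$, which is exactly the stated bound. For part~(2), the same substitution with the vanishing sequences $\zeta_n, \gamma_n$ yields the exponent $-\bigl[\zeta_n\log(\zeta_n/\gamma_n) + \gamma_n - \zeta_n\bigr]\log(n)$, and the remaining task is to absorb the correction $\gamma_n - \zeta_n$ into the $(1+o(1))$ factor. Since $\zeta_n = \omega(\gamma_n)$, the ratio $\zeta_n/\gamma_n \to \infty$, so $\log(\zeta_n/\gamma_n) \to \infty$, and the relative error $|\zeta_n - \gamma_n|/[\zeta_n\log(\zeta_n/\gamma_n)] \leq 1/\log(\zeta_n/\gamma_n) = o(1)$, as required.

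The whole argument is essentially a one-line Chernoff calculation, so there is no substantive obstacle. The only point requiring modest care is the asymptotic bookkeeping in part~(2), where the multiplicative $(1+o(1))$ factor must absorb the linear correction; this works precisely because the hypothesis $\zeta_n = \omega(\gamma_n)$ forces $\log(\zeta_n/\gamma_n)$ to diverge. A brief check that $t^\star > 0$ in each case (automatic from $x > \mu$, i.e.\ from $\zeta > \gamma$ or $\zeta_n = \omega(\gamma_n)$) completes the plan.
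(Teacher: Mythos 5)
Your proof is correct and follows essentially the same route as the paper: both start from the optimized multiplicative Chernoff bound $\P(S_n \geq x) \leq \exp(-x\log(x/\mu) + x - \mu)$, substitute $x = \zeta\log(n)$ (resp.\ $\zeta_n\log(n)$) and $\mu = \gamma\log(n)$ (resp.\ $\gamma_n\log(n)$), and for part~(2) absorb the additive term $\gamma_n - \zeta_n$ into the $(1+o(1))$ factor using $\zeta_n/\gamma_n \to \infty$. The only difference is that you derive the Chernoff inequality explicitly via the MGF and $1+y\leq e^y$, whereas the paper quotes it directly.
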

\begin{proof}[Proof of Lemma \ref{lem:Chernoffvariant}]
Chernoff Lemma \ref{lem:Chernoff} gives
\begin{align*}
    \P(\rS_{N} \geq t ) &\,  \leq e^{-\mu} \Big( \frac{e \mu}{t} \Big)^{t} = \exp\Big( -(\mu-t) - t\log \Big(\frac{t}{\mu} \Big) \Big)
\end{align*}
Let $f(t) = t\log(\frac{t}{\mu}) + (\mu - t)$, where $f(\mu) = 0$ and $f^{'}(t) = \log(\frac{t}{\mu}) > 0$ for $t > \mu$, thus $f(t) > 0$ for all $t > \mu$.
\begin{enumerate}
    \item By taking $t = \zeta q_{N} $, we have
        \begin{align*}
            \P(\rS_{N} \geq t ) \leq \exp\big( - (\gamma - \zeta) q_{N} - \zeta \log( \zeta/\gamma) \cdot q_{N} \big).
        \end{align*}
    \item Note that $\gamma_{N} = o(1)$, $\zeta_{N} = o(1)$ but $\frac{\zeta_{N}}{\gamma_{N}} = \omega(1)$, then $\frac{\gamma_{N} - \zeta_{N}}{\zeta_{N} \log(\zeta_{N}/\gamma_{N}) } = o(1)$, hence follows.
\end{enumerate}
\end{proof}

\begin{lemma}[{\cite[Corollary 3]{latala1997estimation}}]\label{lem:lp-moments}
    Let $\rX_{1}, \ldots, \rX_{N}$ be a sequence of independent non-negative random variables. For $p \geq 1$, there exists some absolute constant $\const > 0$ such that
    \begin{align}
        \Big( \E \big( \sum_{i=1}^{N} \rX_{i} \big)^{p} \Big)^{1/p} \leq \const \frac{p}{\ln(p)} \cdot \max \Big\{ \sum_{i=1}^{N} \E \rX_{i}\,\,, \quad \big(\sum_{i=1}^{N} \E |\rX_{i}|^{p}\big)^{1/p} \Big\}. \notag
    \end{align}
Furthermore, if $\rX_{i}$ are independent symmetric random variables and $p \geq 2$, then
    \begin{align}
        \Big( \E \big( \sum_{i=1}^{N} \rX_{i} \big)^{p} \Big)^{1/p} \leq \const \frac{p}{\ln(p)} \cdot \max \Big\{ \big(\sum_{i=1}^{N} \E |\rX_{i}|^{2}\big)^{1/2} \,\,, \quad \big(\sum_{i=1}^{N} \E |\rX_{i}|^{p}\big)^{1/p} \Big\}. \notag
    \end{align} 
\end{lemma}
\end{document}